\newtheorem{theorem}{Theorem}[section]
\newtheorem{lemma}[theorem]{Lemma}
\newtheorem{proposition}[theorem]{Proposition}
\newtheorem{corollary}[theorem]{Corollary}
\theoremstyle{definition}
\newtheorem{definition}[theorem]{Definition}
\newtheorem{notation}[theorem]{Notation}
\newtheorem{remark}[theorem]{Remark}
\newtheorem{notation-and-remark}[theorem]{Notation and Remark}
\newtheorem{definition-and-remark}[theorem]{Definition and Remark}
\newtheorem{remark-and-notation}[theorem]{Remark and Notation}
\newtheorem{ad-hoc-item}[theorem]{  }
\numberwithin{equation}{section}
\newcommand{\cA}{ {\mathcal A} }
\newcommand{\bC}{ {\mathbb C} }
\newcommand{\cD}{ {\mathcal D} }
\newcommand{\cF}{ {\mathcal F} }
\newcommand{\cM}{ {\mathcal M} }
\newcommand{\bN}{ {\mathbb N} }
\newcommand{\cP}{ {\mathcal P} }
\newcommand{\bR}{ {\mathbb R} }
\newcommand{\bt}{ {\bf t} }
\newcommand{\bu}{ {\bf u} }
\newcommand{\bs}{ {\bf v} }
\newcommand{\bw}{ {\bf w} }
\newcommand{\ugamma}{ \underline{\gamma} }
\newcommand{\ua}{ \underline{a} }
\newcommand{\uI}{ \underline{i} }
\newcommand{\uJ}{ \underline{j} }
\newcommand{\cyctok}{ \mbox{c}_{1 \to k} }
\newcommand{\cyckto}{ \mbox{c}_{k \to 1} }
\newcommand{\murd}{ \mu^{(r,d)} }
\newcommand{\nurd}{ \nu^{(r,d)} }
\newcommand{\perm}{ \mbox{p} }
\newcommand{\tr}{ \mathrm{tr} }
\newcommand{\Ker}{ \mbox{Ker} }
\newcommand{\ans}{ ( a_n )_{n=1}^{\infty} }
\newcommand{\bns}{ ( b_n )_{n=1}^{\infty} }
\newcommand{\cciup}{ \stackrel{\vee}{c} }
\newcommand{\abubble}{ \stackrel{\circ}{a} }
\title[CLT for star-generators of $\mathbf{S_{\infty}}$,
and law of a GUE matrix]
{A central limit theorem for star-generators of 
\boldmath{$S_{\infty}$}, which relates to the law of 
a GUE matrix}
\author[C. K\"ostler]{Claus K\"ostler}
\address{Claus K\"ostler: School of Mathematical Sciences,
University College Cork, Ireland.}
\email{claus@ucc.ie}
\author[A. Nica]{Alexandru Nica}
\thanks{AN: research supported by a Discovery Grant from 
NSERC, Canada.}
\address{Alexandru Nica: Department of Pure Mathematics, 
University of Waterloo, Ontario, Canada.}
\email{anica@uwaterloo.ca}
\begin{document}

\begin{abstract}
It is well-known that, on a purely algebraic level, a simplified 
version of the Central Limit Theorem (CLT) can be proved in 
the framework of a non-commutative probability space, under the 
hypotheses that the sequence of non-commutative random variables 
we consider is {\em exchangeable} and obeys a certain vanishing 
condition of some of its joint moments.  In this approach
(which covers versions for both the classical CLT and the CLT 
of free probability), the determination of the resulting limit 
law has to be addressed on a case-by-case basis.  

In this paper we discuss an instance of the above theorem that takes 
place in the framework of the group algebra $\bC [ S_{\infty} ]$ of
the infinite symmetric group: the exchangeable sequence is provided 
by the {\em star-generators} of $S_{\infty}$, 
and the expectation functional used on $\bC [ S_{\infty} ]$ depends 
in a natural way on a parameter $d \in \bN$.  We identify precisely 
the limit distribution $\mu_d$ for this special instance of CLT, via 
a connection that $\mu_d$ turns out to have with the average empirical 
eigenvalue distribution of a random $d \times d$ GUE matrix.  Moreover, 
we put into evidence a multi-variate version of this result which 
follows from the observation that, on the level of calculations 
with pair-partitions, the (non-centered) star-generators are 
related to a (centered) exchangeable sequence of GUE matrices 
with independent entries.
\end{abstract}

\maketitle
\section{Introduction}

\subsection{A central limit theorem for 
star-generators of $S_{\infty}$.}

$\ $

\noindent
It is well-known that, on a purely algebraic level, a simplified version 
of the Central Limit Theorem (CLT) can be proved via a direct calculation 
of moments: one expresses the relevant moments as sums with terms indexed 
by set-partitions, and one keeps a record of which terms in those 
sums can actually contribute in the limit.  This proof works without 
changes when one moves to the framework of a non-commutative probability 
space.  Moreover, the argument still works when 
the considered sequence of non-commutative random variables
is not required to 
satisfy some form of independence, but is only required to have a weaker 
property of {\em exchangeability}, together with an additional 
vanishing condition on some of the sequence's joint moments 
(those joint moments which, in a certain sense, contain a 
``singleton''); see \cite{BoSp1996}, Theorem 0 on page 138.
A price to pay for these weakened hypotheses is that the 
resulting limit law is no longer universal, and its determination 
has to be addressed on a case-by-case basis.  In particular, 
this approach covers simplified ``combinatorial'' versions of both 
the classical CLT, where the limit is the normal law, and of the 
CLT of free probability, where the limit is the semicircle law of 
Wigner.

In the present paper we examine an interesting instance of this 
limit theorem for exchangeable sequences, taking place in the 
framework of the infinite symmetric group 
\begin{equation}   \label{eqn:1a}
S_{\infty} := \{ \tau : \bN \to \bN \mid 
\mbox{ $\tau$ is bijective and }
\exists \, k_o \in \bN \mbox{ such that $\tau (k) = k$ for 
$k > k_o$} \}.
\end{equation}
On the group algebra $\bC [ S_{\infty} ]$ we consider a natural 
expectation functional, as follows: we fix a
$d \in \bN = \{ 1,2, \ldots \}$ and we let 
$\varphi_d : \bC [ S_{\infty} ] \to \bC$ be the linear functional 
determined by the requirement that 
\begin{equation}   \label{eqn:1b}
\varphi_d ( \tau ) = (1/d)^{|| \tau ||}, 
\ \ \tau \in S_{\infty},
\end{equation}
where the length $|| \tau ||$ of a permutation 
$\tau \in S_{\infty}$ is defined as the minimal 
number $m$ of transpositions $\rho_1, \ldots , \rho_m$
needed in order to achieve  a factorization 
$\tau = \rho_1 \cdots \rho_m$.  The restriction of $\varphi_d$ 
to $S_{\infty}$ is an example of ``block character'' in the 
sense of \cite{GnGoKe2013}, and is at the same time an example of 
extremal character of $S_{\infty}$, parametrized by a very simple 
double-sequence in the Thoma classification of such extremal 
characters (see e.g. Section 3 of the survey paper \cite{Ol2003},
or Section 4.1 of the monograph \cite{BoOl2016}).  In particular,
the linear functional $\varphi_d$ is positive; that is, one has
\[
\varphi_d ( a^{*}a ) \geq 0 \mbox{ for all }
a \in \bC [S_{\infty} ],
\]
where the $*$-operation on $\bC [ S_{\infty} ]$ is introduced 
via the requirement that $\tau^{*} = \tau^{-1}$ for all 
$\tau \in S_{\infty}$.  Hence $( \bC [ S_{\infty} ], \varphi_d )$
is an example of $*$-probability space (by which we mean a couple 
$( \cA , \varphi )$ where $\cA$ is a unital $*$-algebra over $\bC$
and $\varphi : \cA \to \bC$ is a positive linear functional such
that $\varphi (1) = 1$).

A natural example of exchangeable sequence in 
$( \, \bC [ S_{\infty} ] , \varphi_d )$ is provided by its
so-called ``star-generators'', i.e. by the sequence of 
transpositions 
\begin{equation}   \label{eqn:1c}
\gamma_1 = (1, 2), \gamma_2 = (1,3), \ldots ,
\gamma_n = (1, n+1), \ldots
\end{equation}
This sequence of generators of $S_{\infty}$ has received some 
attention in the combinatorics literature, see e.g. the exposition 
and references included in the introduction of \cite{Fe2012}.  
The CLT for exchangeable sequences applies to the centerings of the 
$\gamma_n$'s and produces a 
probability distribution $\mu_d$ on $\bR$, which is the limit in 
moments, for $n \to \infty$, of the centered and normalized sums
\begin{equation}   \label{eqn:1d}
s_n = \frac{1}{\sqrt{n}} 
\Bigl( \, ( \gamma_1 + \cdots + \gamma_n ) 
- \frac{n}{d} \, \Bigr) \in \bC [ S_{\infty} ].
\end{equation}
In this paper we prove that the probability distribution $\mu_d$ 
is intimately related to the empirical eigenvalue distribution of 
a Gaussian Hermitian (usually referred to as ``GUE") random matrix 
of size $d \times d$.  More precisely, one has the following theorem.

\begin{theorem}  \label{thm:11}
Let $\mu_d$ be the limit in moments of the sequence (\ref{eqn:1d}) 
in $( \, \bC [ S_{\infty} ], \varphi_d )$.  Then
\begin{equation}     \label{eqn:11a}
\mu_d * N(0, 1/d^2) = \nu_d
\ \ 
\mbox{ (convolution of probability measures),}
\end{equation}
where $N( 0, 1/d^2 )$ is the centered normal distribution of 
variance $1/d^2$ and $\nu_d$ is the average empirical eigenvalue
distribution of a $d \times d$ GUE matrix of variance 1.
\end{theorem}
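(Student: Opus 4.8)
The plan is to give a concrete random-matrix realization of the limit law $\mu_d$, and then to read off the convolution identity \eqref{eqn:11a} as a matching of Gaussian covariances. First I would exploit the fact that $\varphi_d$ is the $N \to \infty$ limit of the normalized traces $\tr_{d^N} := d^{-N}\,\mathrm{Tr}$ of the permutation action of $S_{\infty}$ on $(\bC^d)^{\otimes N}$; this rests on the identity $\| \sigma \| = m - c(\sigma)$ (with $c(\sigma)$ the number of cycles of $\sigma$ viewed in $S_m$), which gives $d^{-N}\,\mathrm{Tr}(\rho(\sigma)) = d^{-\| \sigma \|} = \varphi_d(\sigma)$. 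In this tensor model the star-generator $\gamma_k = (1,k+1)$ acts as the flip $\sum_{a,b} E_{ab}^{(1)} E_{ba}^{(k+1)}$ between slot $1$ and slot $k+1$, where the $E_{ab}$ are the matrix units of $M_d(\bC)$. Consequently
\[
s_n \ \longleftrightarrow \ \sum_{a,b=1}^d E_{ab}^{(1)} \otimes \Big( \tfrac{1}{\sqrt{n}} \sum_{k=1}^n \big( E_{ba}^{(k+1)} - \tfrac1d \delta_{ab} \big) \Big),
\]
so the distinguished slot $1$ carries the would-be random matrix, while the slots $k+1$ play the role of independent coordinates to be summed.

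Next I would let $n \to \infty$. The centered slot-variables $E_{ba}^{(k+1)} - \tfrac1d\delta_{ab}$ form, under $\bigotimes_k \tr_d$, an exchangeable sequence with vanishing singletons, so the CLT for exchangeable sequences applies entrywise and shows that $\tfrac{1}{\sqrt{n}}\sum_k ( E_{ba}^{(k+1)} - \tfrac1d\delta_{ab} )$ converges in moments to a centered Gaussian family $(G_{ba})_{a,b=1}^d$. Using $\tr_d(E_{ba}E_{dc}) = \tfrac1d\delta_{ad}\delta_{bc}$ together with $\tr_d(E_{ba}) = \tfrac1d\delta_{ab}$, the limiting covariance is
\[
\mathbb{E}[ G_{ba} G_{dc} ] = \tfrac1d \delta_{ad}\delta_{bc} - \tfrac{1}{d^2}\delta_{ab}\delta_{cd}.
\]
The reality relation $\gamma_k^{*}=\gamma_k$ forces $G_{ba} = \overline{G_{ab}}$, so $\mathbf{G} := (G_{ba})_{a,b}$ is a random Hermitian matrix, and the limit object for $s_n$ is $\mathbf{G}$ read through $\tr_d \otimes \mathbb{E}$; in other words $\mu_d$ is exactly the average empirical eigenvalue distribution of $\mathbf{G}$. (As a sanity check this reproduces $m_2(\mu_d) = 1 - 1/d^2$.)

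The heart of the matter is the covariance displayed above: its first term $\tfrac1d\delta_{ad}\delta_{bc}$ is already the GUE covariance, while the second term $-\tfrac1{d^2}\delta_{ab}\delta_{cd}$ is a correction supported on the diagonal entries. I would now add an independent scalar Gaussian $\xi \sim N(0,1/d^2)$ and form $\mathbf{G} + \xi I$. A one-line computation, using independence and $\mathbb{E}[\xi^2]=1/d^2$, gives
\[
\mathbb{E}\big[ (G_{ba}+\xi\delta_{ab})(G_{dc}+\xi\delta_{cd}) \big] = \tfrac1d\delta_{ad}\delta_{bc},
\]
so the diagonal correction is cancelled exactly and $\mathbf{G}+\xi I$ is a centered Gaussian family with precisely the variance-$1$ GUE covariance (entries of variance $1/d$, so that the second moment of $\nu_d$ is $1$). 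Since a jointly Gaussian family is determined by its covariance, $\mathbf{G}+\xi I$ has the same distribution as the GUE matrix $X$ of the theorem, whence their average empirical distributions coincide. Finally, adding $\xi I$ shifts every eigenvalue of $\mathbf{G}$ by the same amount $\xi$, so the average empirical distribution of $\mathbf{G}+\xi I$ is the law of $(\text{eigenvalue of }\mathbf{G}) + \xi$ with the two summands independent, i.e.\ $\mu_d * N(0,1/d^2)$; equating this with the average empirical distribution $\nu_d$ of $X$ yields \eqref{eqn:11a}.

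The step I expect to be the main obstacle is the rigorous identification of the limit of $s_n$ with the Gaussian matrix $\mathbf{G}$: one must commute the two limiting procedures (the $N\to\infty$ defining $\varphi_d$ and the $n\to\infty$ of the CLT) and verify that all higher mixed moments of the $G_{ba}$ factor by Wick's rule, so that the abstract pair-partition weights produced by the exchangeable CLT genuinely match the Gaussian random-matrix computation. Concretely this amounts to checking, for each pair-partition $\pi$, that the weight $\lim_n \varphi_d(\dot\gamma_{k_1}\cdots\dot\gamma_{k_p})$ attached to $\pi$ equals the corresponding Wick contraction of the $G_{ba}$; the exchangeability and the vanishing-singleton condition are exactly what guarantee that only pair-partitions contribute and that the bookkeeping closes up. Once this Gaussianity is in place, the covariance-matching argument above turns the measure identity \eqref{eqn:11a} into a routine consequence.
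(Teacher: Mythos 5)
Your proposal is correct, but it takes a genuinely different route from the paper's. The paper proceeds purely combinatorially: it checks exchangeability and the singleton-factorization property of the star-generators by length computations in $S_{\infty}$ (Proposition \ref{prop:44}), invokes the exchangeable CLT (Theorem \ref{thm:25}), and then the real work is the orbit-counting identity of Proposition \ref{prop:56} (via Lemmas \ref{lemma:512} and \ref{lemma:513}), which matches the pair-partition weights $\bu_d ( \pi )$ of the star-generators against the Wick weights
$(1/d)^{(k+2)/2 - \# ( \cyctok \perm_{\pi} \mid \{1, \ldots ,k\} )}$
of a GUE, plus the e.g.f.\ translation lemma (Proposition \ref{prop:113}) which produces the Gaussian convolution factor. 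You instead realize $\varphi_d$ \emph{exactly} in the tensor representation on $( \bC^d )^{\otimes N}$ -- note that by Equation (\ref{eqn:41c}) the identity $\varphi_d = \tr_{d^N} \circ \rho$ holds for every fixed $N$ large enough to support the word in question, so the ``commuting of two limits'' you worry about at the end is not actually an issue; only the CLT limit $n \to \infty$ remains. In this model the slot entries are tensor-independent across $k$, so the entrywise CLT is the classical multivariate Wick argument, and your limit matrix $\mathbf{G}$ with covariance $\frac{1}{d}\delta_{ad}\delta_{bc} - \frac{1}{d^2}\delta_{ab}\delta_{cd}$ is (as one checks from $\mathbb{E}[ ( \mathrm{Tr}\, \mathbf{G} )^2 ] = 0$) exactly the traceless part $X - \frac{\mathrm{Tr}\, X}{d} I$ of a variance-$1$ GUE matrix $X$; adding the independent scalar $\xi I$ recovers the full GUE covariance, and the eigenvalue shift yields the convolution. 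In effect your construction replaces the paper's hardest combinatorial step (Proposition \ref{prop:56}) by Schur--Weyl-type bookkeeping -- expanding $\varphi_d ( \gamma_{\uI (1)} \cdots \gamma_{\uI (k)} )$ in the tensor model reproduces the GUE Wick sum directly -- and replaces the translation lemma by the trace-part decomposition of the GUE. Two minor caveats: the CLT you need is for an exchangeable array of $d^2$-tuples (the matrix units within one slot are correlated and non-commuting among themselves), which is not literally covered by Theorem \ref{thm:25} as the paper states it, but follows by the same pairing argument since tensor independence makes all pairings contribute and your limit covariance is symmetric; and passing from equality of moments to the measure identity (\ref{eqn:11a}) still needs the moment-determinacy remark the paper makes after Theorem \ref{thm:16}. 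What each approach buys: the paper's route also yields the explicit weights $\bu_d ( \pi )$ and the multivariate Theorem \ref{thm:16} with its pair-partition combinatorics of independent interest; yours is shorter, explains conceptually why the GUE appears, and extends verbatim to the multivariate statement as well.
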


The convolution formula from Theorem \ref{thm:11} can be 
combined with known facts about the average empirical eigenvalue 
distribution of a GUE matrix in order to obtain further 
information about $\mu_d$.  One gets in particular 
a precise description of Laplace transform.  Indeed, it is
known (see e.g. Section 3.3 of the monograph \cite{AnGuZe2009}, 
or Section 2 of the survey paper \cite{HaTo2003}) 
that the Laplace transform of $\nu_d$ is defined for 
all $z \in \bC$, and has the explicit formula
\begin{equation}   \label{eqn:57e}
\int_{\bR} e^{zt} \ d \, \nu_d (t) = Q_d (z)
\cdot e^{z^2/(2d)}, 
\ \ z \in \bC ,
\end{equation}
where $Q_d (z)$ is the polynomial defined by
\begin{equation}   \label{eqn:57f}
Q_d (z) = \sum_{j=0}^{d-1} \frac{1}{d^j \, (j+1)!}
\cdot \left(  \begin{array}{c} d-1 \\ j \end{array} \right)
\cdot z^{2j}.
\end{equation}
Upon substituting this into the formula for Laplace transforms
which follows directly from Equation (\ref{eqn:11a}), one arrives 
to the following corollary.

$\ $

\begin{corollary}    \label{cor:12}
The Laplace transform (or equivalently, the exponential 
moment-generating function) of $\mu_d$ is defined for all 
$z \in \bC$, and has the explicit formula
\begin{equation}   \label{eqn:12a}
\int_{\bR} e^{zt} \ d \, \mu_d (t) = 
Q_d (z) \cdot e^{(d-1) z^2/(2d^2)}, 
\ \ z \in \bC ,
\end{equation}
where the polynomial $Q_d$ is picked from (\ref{eqn:57f}).
\end{corollary}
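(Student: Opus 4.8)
The computation underlying Corollary~\ref{cor:12} is short, so the plan is essentially to pass the convolution identity (\ref{eqn:11a}) of Theorem~\ref{thm:11} to the level of integral transforms, where convolution becomes multiplication, and then to divide out the Gaussian factor. Write $L_{\rho}(z) := \int_{\bR} e^{zt}\, d\rho(t)$ for the (bilateral) Laplace transform of a probability measure $\rho$ on $\bR$. The two ingredients I would feed in are the elementary, nowhere-vanishing entire function
\[
L_{N(0,1/d^2)}(z) = e^{z^2/(2d^2)},
\]
which is the Laplace transform of the centered normal law of variance $1/d^2$, together with the explicit formula $L_{\nu_d}(z) = Q_d(z)\, e^{z^2/(2d)}$ recorded in (\ref{eqn:57e})--(\ref{eqn:57f}).

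Granting for the moment that $L_{\mu_d}$ is defined, the multiplicativity of the Laplace transform under convolution turns (\ref{eqn:11a}) into the relation $L_{\mu_d}(z)\, L_{N(0,1/d^2)}(z) = L_{\nu_d}(z)$, whence
\[
L_{\mu_d}(z) = \frac{Q_d(z)\, e^{z^2/(2d)}}{e^{z^2/(2d^2)}}
= Q_d(z)\, \exp\!\Bigl( \tfrac{z^2}{2d} - \tfrac{z^2}{2d^2} \Bigr).
\]
A one-line simplification of the exponent, namely $\frac{z^2}{2d} - \frac{z^2}{2d^2} = \frac{z^2}{2}\cdot\frac{d-1}{d^2} = \frac{(d-1)z^2}{2d^2}$, then reproduces the asserted formula (\ref{eqn:12a}).

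The only point that is not purely formal---and which I regard as the main (if modest) obstacle---is the justification that $\mu_d$ actually possesses an everywhere-defined Laplace transform, since the multiplicativity step above cannot simply be assumed. I would secure this by running the argument first through characteristic functions, where convolution always factors and no integrability issue arises. Setting $z = it$ and using that the characteristic function of $N(0,1/d^2)$ equals $e^{-t^2/(2d^2)}$, which never vanishes, the identity (\ref{eqn:11a}) yields for the characteristic function $\widehat{\mu_d}$ of $\mu_d$ the formula
\[
\widehat{\mu_d}(t) = \frac{Q_d(it)\, e^{-t^2/(2d)}}{e^{-t^2/(2d^2)}} = Q_d(it)\, e^{-(d-1)t^2/(2d^2)}.
\]
Since $\widehat{\mu_d}$ is a polynomial times a Gaussian, Fourier inversion shows that $\mu_d$ has a density of the same shape, hence has finite exponential moments of every order; consequently $L_{\mu_d}(z)$ exists and is entire for all $z \in \bC$. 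Finally, the entire function $L_{\mu_d}$ and the entire function $z \mapsto Q_d(z)\, e^{(d-1)z^2/(2d^2)}$ agree along the imaginary axis, where both equal $\widehat{\mu_d}$, so they coincide on all of $\bC$ by the identity theorem; this establishes (\ref{eqn:12a}) for every $z \in \bC$. (The degenerate case $d=1$ is immediate, since there $Q_1 \equiv 1$ and the formula collapses to $L_{\mu_1}(z) = 1$, consistent with $\mu_1 = \delta_0$.)
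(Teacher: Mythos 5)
Your proposal is correct and follows essentially the same route as the paper: the paper's proof consists precisely of passing the convolution identity (\ref{eqn:11a}) to Laplace transforms, substituting the known formula (\ref{eqn:57e})--(\ref{eqn:57f}) for $L_{\nu_d}$, and dividing by the Gaussian factor $e^{z^2/(2d^2)}$. The extra care you take in justifying that $L_{\mu_d}$ exists everywhere (via characteristic functions, Fourier inversion, and the identity theorem) fills in a point the paper treats as immediate, and is a sound way to do so.
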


\begin{remark}   \label{rem:13}

\vspace{6pt}

(1) From Corollary \ref{cor:12} and general considerations on 
the Laplace transform it follows that, for $d \geq 2$, the 
probability measure $\mu_d$ is absolutely continuous 
\footnote{ For $d=1$ it is immediately inferred, either 
from Equation (\ref{eqn:11a}) or from Equation (\ref{eqn:12a}),
that $\mu_1$ is the Dirac mass at $0$.} 
with respect to Lebesgue measure, with density of the form 
$P_d (t) e^{-t^2 d^2 / (2d-2)}$, where $P_d$ is an even 
polynomial of degree $2d-2$.  Moreover, 
the coefficients of $P_d$ can be obtained out of those of 
the polynomial $Q_d$ from Equation (\ref{eqn:57f}) by 
solving a triangular system of linear equations.  For instance,
for $d=2$ and $d=3$ one gets
\[
d \mu_2 (t) = \frac{1}{\sqrt{2 \pi}} 8t^2 \, e^{-2t^2} \, dt,
\ \ d \mu_3 (t) = \frac{3}{64 \sqrt{\pi}} 
(20 -108 t^2 +243 t^4) \, e^{-9t^2/4} \, dt.    
\]
\vspace{6pt}

(2) The sum $\gamma_1 + \cdots + \gamma_n$ 
appearing in (\ref{eqn:1d}) is unitarily conjugated to the sum 
of transpositions
$(1, n+1) + (2, n+1) + \cdots + (n,n+1) \in \bC [ S_{\infty} ]$,
which goes under the name of ``Jucys-Murphy element''.  Due to 
this fact, Theorem \ref{thm:11} can be re-stated as a result 
about the limit distribution for the sequence of centered and 
renormalized Jucys-Murphy elements.  

\vspace{6pt}

(3) It is natural to ask what happens when in Theorem \ref{thm:11} 
we replace the expectation functional $\varphi_d$ by a functional
$\varphi : \bC [ S_{\infty} ] \to \bC$ coming from a more general 
extremal character of $S_{\infty}$.  The centered star-generators
will continue to be exchangeable in the newly considered 
$*$-probability space 
$( \bC [ S_{\infty} ], \varphi )$, but will no longer satisfy
the vanishing-moment condition.  In order to understand what are 
the correct setting and limit theorem for this more general situation, 
it is likely that one needs some considerations of non-commutative 
dynamical systems, along the lines started in \cite{GoKo2010}.

\vspace{6pt}

(4) In the limit case ``$d = \infty$'', i.e. when in the 
considerations leading to Theorem \ref{thm:11} we replace 
$1/d$ by $0$, the limit law $\mu_d$ becomes the semicircle 
law, and we retrieve a result of Biane \cite{Bi1995}.  

The paper \cite{Bi1995} also puts into evidence a phenomenon 
of asymptotic free independence, going in a multi-variate 
framework.  In connection to that, we prove that
Theorem \ref{thm:11} has a multi-variate version, which is 
discussed in the next subsection.
\end{remark}

$\ $

\subsection{Multi-variate extension of Theorem \ref{thm:11}.}

$\ $

\noindent
In order to state (in Theorem \ref{thm:16} below) the 
multi-variate version, we first introduce some relevant notation 
referring to non-commutative distributions.

\begin{definition-and-remark}   \label{def:14}
Let $r$ be in $\bN$.

(1) Let $\bC \langle X_1, \ldots , X_r \rangle$ be the algebra of
polynomials in non-commuting indeterminates $X_1, \ldots, X_r$.
We turn $\bC \langle X_1, \ldots , X_r \rangle$ into a 
$*$-algebra by considering on it the $*$-operation uniquely 
determined by the requirement that $X_p^{*} = X_p$ for 
$1 \leq p \leq r$.  We denote
\begin{equation}   \label{eqn:14a}
\cD_r = \{ 
\mu : \bC \langle X_1, \ldots , X_r \rangle \to \bC ,
\mbox{ linear } \mid
\mu \mbox{ is positive and has } \mu (1) = 1 \}
\end{equation}
(where the fact that $\mu$ is positive means, by definition,
that $\mu ( P^{*}P ) \geq 0$ for all 
$P \in \bC \langle X_1, \ldots , X_r \rangle$).  For a functional
$\mu \in \cD_r$, the quantities of the form 
\begin{equation}   \label{eqn:14b}
\mu (X_{i(1)} \cdots X_{i(k)}), 
\mbox{ with $k \in \bN$ and 
       $( i(1), \ldots , i(k) ) \in \{ 1, \ldots , r \}^k$,}
\end{equation}
are called {\em moments} of $\mu$.  On $\cD_r$ we have a natural 
notion of {\em convergence in moments}, where the convergence 
of a sequence $( \mu_n )_{n=1}^{\infty}$ to a limit $\mu$ 
amounts to the fact that
\[
\lim_{n \to \infty}  \mu_n (P) = \mu (P), 
\ \ \forall \, P \in 
\bC \langle X_1, \ldots , X_r \rangle .
\]

(2) Let $\mu$ be a functional in the space $\cD_r$ 
considered above.  The 
{\em commuting exponential generating function} 
(or ``commuting e.g.f", for short) for the moments 
of $\mu$ is the power series
\begin{equation}  \label{eqn:14c}
f(z_1, \ldots , z_r) := 1 + \sum_{k=1}^{\infty}
\ \sum_{i(1), \ldots , i(k) =1}^r
\ \frac{ \mu (X_{i(1)} \cdots X_{i(k)}) }{k!}
\, z_{i(1)} \cdots z_{i(k)},
\end{equation}
where $z_1, \ldots , z_r$ are commuting indeterminates.

We note that in the case $r=1$, (\ref{eqn:14c}) gives precisely
the usual e.g.f for the moments of a distribution $\mu$.  When 
$r \geq 2$, the series $f(z_1, \ldots , z_r)$ from (\ref{eqn:14c}) 
still says something about the moments of $\mu$, but one can only 
retrieve certain symmetric sums of such moments.  (E.g. the 
coefficient of $z_1 z_2$ allows us to find the sum 
$\mu (X_1 X_2) + \mu (X_2 X_1)$, but not the individual values 
of $\mu (X_1 X_2)$ and $\mu (X_2 X_1)$.)

\vspace{6pt}

(3) Let $( \cA , \varphi )$ be a $*$-probability space
and let $a_1, \ldots , a_r$ be an $r$-tuple of selfadjoint 
elements of $\cA$.  The {\em joint distribution} of 
$a_1, \ldots , a_r$ in $( \cA , \varphi )$ is the functional 
$\mu \in \cD_r$ defined via the requirement that for every 
$k \in \bN$ and every tuple
$( i(1), \ldots , i(k) ) \in \{ 1, \ldots , r \}^k$ one has
\[
\mu (X_{i(1)} \cdots X_{i(k)} ) =
\varphi (a_{i(1)} \cdots a_{i(k)} ).
\]
\end{definition-and-remark}

\begin{remark}   \label{rem:15}
Some of the notions from Definition \ref{def:14} 
can still be considered in connection to any linear functional 
$\mu : \bC \langle X_1, \ldots , X_r \rangle \to \bC$ which has 
$\mu (1) = 1$, but is not assumed to be positive, hence does not 
necessarily belong to the space $\cD_r$ from (\ref{eqn:14a}).  In 
particular, for such $\mu$ we will still talk about its moments,
defined as in (\ref{eqn:14b}), and about its commuting e.g.f 
$f(z_1, \ldots , z_r)$, defined as in (\ref{eqn:14c}).
\end{remark}

An important example of functional in $\cD_r$ which we will 
use is the joint distribution of an $r$-tuple 
of $d \times d$ GUE random matrices with independent entries.

\begin{definition}   \label{def:16}
Let $M_1, \ldots , M_r$ be a tuple of $d \times d$ GUE random 
matrices with independent entries. 
We will denote by $\nurd$ the functional in $\cD_r$ which is
defined via the requirement that for every $k \in \bN$ and 
$( i(1), \ldots , i(k) ) \in \{ 1, \ldots , r \}^k$ one has
\begin{equation}   \label{eqn:16a}
\nurd (X_{i(1)} \cdots X_{i(k)}) =
( E \circ \tr_d) ( M_{i(1)} \cdots M_{i(k)}),
\end{equation}
where ``$( E \circ \tr_d)$'' is the expected normalized trace of 
a $d \times d$ random matrix.  The details of this definition are 
reviewed in Remark \ref{rem:52} below.
\end{definition}

\begin{theorem}   \label{thm:16}
Let $d \in \bN$ and consider the $*$-probability space 
$( \bC [ S_{\infty} ], \varphi_d )$, with $\varphi_d$ defined as in 
(\ref{eqn:1b}) above.  Let us also consider an $r \in \bN$ and an 
injective function 
\[
\ugamma : \{ 1, \ldots , r \} \times \bN \to 
\{ \gamma_n \mid n \in \bN \} \subseteq \bC [ S_{\infty} ],
\]
where $\{ \gamma_n \mid n \in \bN \}$ are the star-generators 
of $S_{\infty}$.  For every $p \in \{ 1, \ldots , r \}$ and 
$n \in \bN$ we consider the selfadjoint element 
\[
s_{p,n} := \frac{1}{\sqrt{n}} 
\Bigl( \ugamma (p,1) + \cdots + \ugamma (p,n) - \frac{n}{d} 
\Bigr)
\in \bC [ S_{\infty} ],
\]
then for every $n \in \bN$ we consider the joint distribution 
$\mu_n \in \cD_r$ of the $r$-tuple $s_{1,n}, \ldots , s_{r,n}$ 
(in the sense of Definition \ref{def:14}(3)).  One has that:

\vspace{6pt}

(1) The sequence $( \mu_n )_{n=1}^{\infty}$ 
defined above converges in moments to a limit $\murd \in \cD_r$.

\vspace{6pt}

(2) Let $f$ be the commuting e.g.f for the moments of $\murd$, 
and let $g$ be the commuting e.g.f for the moments of the joint 
GUE distribution $\nurd$ from Definition \ref{def:16}.  Then 
\begin{equation}   \label{eqn:16b}
f(z_1, \ldots , z_r) \cdot
e^{(z_1^2 + \cdots + z_r^2)/2d^2} = 
g(z_1, \ldots , z_r).
\end{equation}
\end{theorem}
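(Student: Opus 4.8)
The plan is to make the functional $\varphi_d$ concrete through its tensor realization, and then to reduce the whole statement to the comparison of two Gaussian families that differ by a central shift. For part (1), one can note that the family $\{\ugamma(p,j)-1/d\}_{p,j}$ of centered star-generators is exchangeable in $(\bC[S_\infty],\varphi_d)$ and satisfies the singleton-vanishing condition, so the central limit theorem for exchangeable sequences applies exactly as in the one-variable case and delivers a limit $\murd\in\cD_r$. I would, however, obtain convergence from the explicit model below, since that feeds directly into part (2). Recall that $\varphi_d$ is the normalized trace of the permutation action on tensor powers of $\bC^d$: indexing the tensor factors by $0,1,2,\dots$, one has $\varphi_d(\tau)=d^{-(N+1)}\tr(\tau)$ whenever $\tau$ is supported on factors $0,\dots,N$. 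Under this identification each star-generator $(1,m+1)$ becomes the swap of factor $0$ with factor $m$, i.e. $\sum_{a,b}E_{ab}^{(0)}\otimes E_{ba}^{(m)}$ in matrix units (the superscript indicating the tensor factor), so after centering
\[
s_{p,n}=\sum_{a,b}E_{ab}^{(0)}\otimes\frac{1}{\sqrt{n}}\,\widetilde{N}^{(p,n)}_{ba},
\qquad
\widetilde{N}^{(p,n)}_{ba}:=\sum_{j=1}^{n}\Bigl(E_{ba}^{(m(p,j))}-\tfrac{1}{d}\delta_{ab}\Bigr),
\]
where $m(p,j)$ is the bath factor used by $\ugamma(p,j)$; injectivity of $\ugamma$ forces distinct colours to use disjoint bath factors.

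Since the bath summands sit in distinct tensor factors, they are tensor-independent with respect to the normalized bath trace, and the central limit theorem for such sums shows that $\tfrac{1}{\sqrt n}\widetilde{N}^{(p,n)}$ converges to a Gaussian family $G^{(p)}=(G^{(p)}_{ba})$ (one whose joint moments are given by the Wick/pair-partition formula). A direct second-moment computation, in which the centering cancels the $O(n^2)$ contributions, yields the covariance $\tau(G^{(p)}_{ba}G^{(p)}_{dc})=\tfrac{1}{d}\delta_{ad}\delta_{bc}-\tfrac{1}{d^2}\delta_{ab}\delta_{cd}$, with the families for different colours independent. This establishes part (1), with $\murd$ realized as the joint law of $s_p:=\sum_{a,b}E_{ab}^{(0)}\otimes G^{(p)}_{ba}$ under $\varphi_d$.

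The crucial observation is that this covariance is precisely the GUE covariance $\tfrac{1}{d}\delta_{ad}\delta_{bc}$ minus the rank-one term $\tfrac{1}{d^2}\delta_{ab}\delta_{cd}$, and that the missing term is exactly the entry-covariance of $g_p I_d$ for a real $N(0,1/d^2)$ variable $g_p$. Introducing independent such $g_1,\dots,g_r$, independent also of the $G^{(p)}$, the family $(s_p+g_pI_d)_{p=1}^{r}$ is again Gaussian and now has within-colour covariance $\tfrac{1}{d}\delta_{ad}\delta_{bc}$ and vanishing cross-colour covariance; hence its joint law coincides with the joint GUE law $\nurd$. Finally, because each $g_pI_d$ is central (a scalar multiple of the identity, commuting with everything) and the $g_p$ are classically independent of the $s_q$ and of each other, the symmetrized exponential defining the commuting e.g.f. factorizes, so the commuting e.g.f. of $(s_p+g_pI_d)_p$ equals the product of $f$ (the commuting e.g.f. of $(s_p)_p$) with the commuting e.g.f. of the classical Gaussian family $(g_p)_p$. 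The latter is $\prod_p E[e^{z_pg_p}]=e^{(z_1^2+\cdots+z_r^2)/2d^2}$, which gives $g=f\cdot e^{\sum_p z_p^2/2d^2}$, i.e. \eqref{eqn:16b}.

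The two places demanding care are: (i) the central limit theorem and Wick property for the bath sums, together with the exact second-moment calculation that produces both terms of the covariance — the subtraction $-\tfrac{1}{d^2}\delta_{ab}\delta_{cd}$ coming from the centering is exactly what ultimately generates the Gaussian factor; and (ii) the lemma that adding a classically-independent central Gaussian in each colour multiplies the commuting e.g.f. by the corresponding Gaussian factor. A subtlety worth flagging is that the $s_p$ are \emph{not} mutually independent, since they share the factor-$0$ matrix structure; one must therefore isolate only the central shifts $g_pI_d$ as the independent Gaussian part, rather than attempting to factorize the distribution over colours.
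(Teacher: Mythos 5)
Your proposal is correct, but it follows a genuinely different route from the paper. The paper never introduces a concrete model for $\varphi_d$: it splits Theorem \ref{thm:16} into three propositions, namely (a) the Bozejko--Speicher exchangeable CLT applied to the centered star-generators, with exchangeability and the singleton-factorization property of $( \gamma_n )_{n=1}^{\infty}$ verified by hand via length/cycle arguments in $S_{\infty}$ (Proposition \ref{prop:110}, Section 4); (b) a purely combinatorial identity (Proposition \ref{prop:56}) asserting that the pair-partition function $\bu_d$ of the \emph{non-centered} generators coincides with the GUE Wick function $\bs_d$, proved by matching orbit counts of the permutation $q_{\pi}$ built from star-generators against those of $\cyctok \, \perm_{\pi}$; and (c) a general translation lemma for exchangeable sequences (Proposition \ref{prop:113}), proved by a binomial resummation over $\pi$-saturated subsets, which produces the factor $e^{\lambda^2(z_1^2+\cdots+z_r^2)/2}$. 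You instead realize $\varphi_d$ via its Schur--Weyl/block-character model on tensor powers of $\bC^d$, identify the centered star-generators with centered matrix units in independent bath factors, and run a classical (tensor-independent) CLT to realize $\murd$ concretely as the joint law of matrices $s_p = \sum_{a,b} E_{ab}^{(0)} \otimes G^{(p)}_{ba}$ whose entry covariance is the GUE covariance minus the rank-one term $\tfrac{1}{d^2}\delta_{ab}\delta_{cd}$; the theorem then becomes the transparent statement that a variance-$1$ GUE matrix decomposes as its ``traceless-type'' part plus an independent $N(0,1/d^2)$ multiple of $I_d$, and the e.g.f.\ identity (\ref{eqn:16b}) follows because a central, classically independent Gaussian shift factors out of $\varphi(\exp(\sum_p z_p \, \cdot \,))$. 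Your approach buys a conceptual explanation of where the convolution with $N(0,1/d^2)$ comes from (it replaces both the paper's orbit-counting Lemmas \ref{lemma:512}--\ref{lemma:513} and its Proposition \ref{prop:113}), at the cost of importing the tensor realization of $\varphi_d$ and a multivariate CLT for tensor-independent non-commuting families; the paper's route stays entirely inside $S_{\infty}$ combinatorics and yields the identity $\bs_d = \bu_d$ on $\sqcup_k \cP_2(k)$, which has independent interest. Two details you should make explicit to be fully rigorous: the bath CLT with the Wick property is itself an instance of the exchangeable CLT (the per-factor families are exchangeable over $j$, centered, and the pair-partition function factorizes across tensor factors, crossings included, because distinct factors commute and the product state is multiplicative); and the limiting Gaussian family exists because the covariance kernel $\tfrac{1}{d}\delta_{ad}\delta_{bc}-\tfrac{1}{d^2}\delta_{ab}\delta_{cd}$ is positive semidefinite (it is $\tfrac{1}{d}$ times the orthogonal complement of the normalized identity in $\bC^{d \times d}$, up to the projection onto the identity), which also supplies the positivity of $\murd$ claimed in part (1).
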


$\ $

In the special case when $r=1$ and $\ugamma$ is defined by 
putting $\ugamma (1,n) := \gamma_n$, Theorem \ref{thm:16} 
retrieves the statement of Theorem \ref{thm:11}.  Indeed, 
in this case Equation (\ref{eqn:16b}) takes the form 
$f(z) \cdot e^{z^2/(2d^2)} = g(z)$, and we recognize the two 
sides of this equality to be the e.g.f's for moments of 
$\mu_d * N(0, 1/d^2)$ and respectively of $\nu_d$, with 
$\mu_d$ and $\nu_d$ as in the statement of Theorem \ref{thm:11}.  
It follows that $\mu_d * N(0, 1/d^2)$ and $\nu_d$ have the 
same sequence of moments.  It is easy to check that this 
sequence of moments grows slowly enough to ensure the 
uniqueness of the underlying probability distribution, and 
the equality $\mu_d * N(0, 1/d^2) = \nu_d$ hence follows.

$\ $

\subsection{In the background: functions on set-partitions.} 

$\ $

\noindent
We now move to explain how Theorem \ref{thm:16} comes about. 
In order to do so, it is relevant that we review some ideas 
concerning functions on set-partitions that were championed 
by Bozejko and Speicher in \cite{BoSp1996}.  A quick review 
of notation: in Definition \ref{def:17} below we consider 
partially ordered sets of the form ``$( \cP (k), \leq )$'' 
where $k \in \bN$ and $\cP (k)$ denotes the set of all 
partitions of $\{ 1, \ldots , k \}$, partially ordered by 
reverse refinement (for $\pi = \{ V_1, \ldots , V_p \}$ 
and $\rho = \{ W_1, \ldots, W_q \}$ in $\cP (k)$ we write 
``$\pi \leq \rho$'' to mean that every block $V_i$ of $\pi$
is contained in some block $W_j$ of $\rho$).  We let 
$\cP_2 (k)$ denote the set of all pair-partitions of 
$\{ 1, \ldots , k \}$: 
\[
\cP_2 (k) := \{  \pi \in \cP (k) 
\mid \mbox{every block $V$ of $\pi$ has $|V| = 2$} \}, 
\ \ k \in \bN , 
\]
with the convention that 
$\cP_2 (k) = \emptyset$ when $k$ is odd.  Moreover, we will 
use the standard notation that
\begin{equation}  \label{eqn:13x}
\left\{  \begin{array}{l}
\mbox{ for any $k \in \bN$ and any tuple 
$\uI = ( \uI (1), \ldots \uI (k) ) \in \bN^k$,}  \\
\mbox{ $\Ker ( \uI ) \in \cP (k)$ denotes the partition of 
$\{ 1, \ldots , k \}$ into level sets of $\uI$.}
\end{array}    \right.
\end{equation} 
This notation is useful when one deals with exchangeable 
sequences -- indeed, one of the possible descriptions (cf. 
Remark \ref{def:21} below) for the exchangeability of a sequence 
$\ans$ in a $*$-probability space $( \cA , \varphi )$ is that
\begin{equation}   \label{eqn:13y}
\left\{   \begin{array}{c}
\varphi (a_{\uI (1)} \cdots a_{ \uI (k)} ) 
         = \varphi (a_{\uJ (1)} \cdots a_{\uJ (k)} )      \\
\mbox{for every $k \in \bN$ and $\uI , \uJ  \in \bN^k $
      such that $\Ker ( \uI ) = \Ker ( \uJ )$.}
\end{array}   \right.
\end{equation}

$\ $

\begin{definition}   \label{def:17}
Let $( \cA , \varphi )$ be a $*$-probability space and let 
$\ans$ be an exchangeable sequence of selfadjoint elements 
of $\cA$.  

\vspace{6pt}

(1) The {\em function on partitions} associated to $\ans$ is
$\bt : \sqcup_{k=1}^{\infty} \cP (k) \to \bC$ defined as follows:
for every $k \in \bN$ and $\pi \in \cP (k)$ we put 
\begin{equation}   \label{eqn:17a}
\left\{  \begin{array}{l}
\bt ( \pi ) := \varphi \bigl(
a_{\uI (1)} , \ldots , a_{\uI (k)}  \bigr),
\mbox{ where $\uI \in \bN^k$ is}             \\
\mbox{ any $k$-tuple such that $\mathrm{Ker} ( \uI ) = \pi$. }
\end{array}    \right.
\end{equation}
This formula is unambiguous due to (\ref{eqn:13y}) above.

\vspace{6pt}

(2) Consider the algebra 
$\bC \langle \, \{ X_p \mid p \in \bN \} \, \rangle$ of 
polynomials in a countable collection of non-commuting indeterminates.
Following \cite{BoSp1996}, we use the function $\bt$ from (\ref{eqn:17a}) 
in order to define a linear functional 
$\mu : \bC \langle \, \{ X_p \mid p \in \bN \} \, \rangle \to \bC$
via the prescription that $\mu (1) = 1$ and that for every
$k \in \bN$ and every tuple 
$\uI = ( \uI (1), \ldots , \uI (k) ) \in \bN^k$ we put
\begin{equation}   \label{eqn:17b}
\mu \bigl( X_{\uI (1)} \cdots X_{\uI (k)} \bigr) =
\sum_{ \pi \in \cP_2 (k), \,  \pi \leq Ker \, ( \uI )}
\ \bt ( \pi ).
\end{equation}
In (\ref{eqn:17b}) we use the convention that 
$\mu ( X_{\uI(1)} \cdots X_{\uI (k)} ) = 0$ whenever
$\{ \pi \in \cP_2 (k) \mid \pi \leq \Ker ( \uI ) \}
= \emptyset$, i.e. whenever the partition $\Ker ( \uI )$ 
has at least one block of odd cardinality.
\end{definition}

\begin{remark}  \label{rem:18}
It was shown in \cite{BoSp1996} (see review in Section 2
below) that, in the framework of the preceding definition, 
the only additional requirement needed to ensure that the 
CLT works on the exchangeable sequence $\ans$ is that $\bt$ 
from (\ref{eqn:17a}) satisfies:
\begin{equation}   \label{eqn:18a}
\left\{  \begin{array}{l}
\bt ( \pi ) = 0 \ \mbox{ whenever the partition 
$\pi \in \sqcup_{k=1}^{\infty} \cP (k)$ }                \\
\mbox{ has at least one block $V$ with $|V| = 1$.}
\end{array}  \right.
\end{equation}
If this additional requirement is fulfilled, then for every 
$r \in \bN$ one gets 
a suitable ``exchangeable CLT for $r$-tuples'',
where the limit law is the restriction of the linear 
functional $\mu$ from (\ref{eqn:17b}) to the 
subalgebra $\bC \langle X_1, \ldots , X_r \rangle$ of
$\bC \langle \, \{ X_p \mid p \in \bN \} \, \rangle$.
The precise statement of this exchangeable CLT for 
$r$-tuples is reviewed in Theorem \ref{thm:25} below.
\end{remark}

In order to connect the notions introduced in Definition \ref{def:17} 
to the framework of the present paper, we make the following notation.

\begin{notation}   \label{def:19}
Let $d \in \bN$ and consider the $*$-probability space 
$( \bC [ S_{\infty} ], \varphi_d )$, with $\varphi_d$ defined as in
(\ref{eqn:1b}) above.

\vspace{6pt}

(1) Let $( \gamma_n )_{n=1}^{\infty}$ be the exchangeable sequence 
of star-generators from (\ref{eqn:1c}).  We denote the function on 
partitions associated to this exchangeable sequence by $\bu_d$, and 
we denote the corresponding linear functional on 
$\bC \langle \, \{ X_p \mid p \in \bN \} \, \rangle$ by
$\nu^{( \infty , d)}$.

\vspace{6pt}

(2) Consider on the other hand the exchangeable sequence 
$( \gamma_n - \frac{1}{d} )_{n=1}^{\infty}$ obtained by centering the 
star-generators $\gamma_n$ with respect to $\varphi_d$.  We denote the 
function on partitions associated to 
$( \gamma_n - \frac{1}{d} )_{n=1}^{\infty}$ by $\bt_d$, and
we denote the corresponding linear functional on 
$\bC \langle \, \{ X_p \mid p \in \bN \} \, \rangle$ by 
$\mu^{( \infty , d)}$.
\end{notation}

We can now put forward three separate propositions which, 
when combined together, give the statement of Theorem \ref{thm:16}.

\begin{proposition}  \label{prop:110}
Let $d, r$ be in $\bN$.

(1) The linear functional $\mu^{( \infty , d)} :
\bC \langle \, \{ X_p \mid p \in \bN \} \, \rangle \to \bC$ 
from Notation \ref{def:19}(2) is positive, that is, 
one has $\mu ( P^{*}P ) \geq 0$ for all 
$P \in \bC \langle \, \{ X_p \mid p \in \bN \} \, \rangle$.
Consequently, the restriction 
\begin{equation}  \label{eqn:110a}
\mu^{(r,d)} := \mu^{( \infty ,d )} \mid
\bC \langle X_1, \ldots , X_r \rangle
\end{equation}
belongs to the space of distributions $\cD_r$ introduced in
Definition \ref{def:14}(1).

\vspace{6pt}

(2) Consider the distributions $\mu_n \in \cD_r$ that are
indicated in the statement of Theorem \ref{thm:16}.  Then 
the sequence $( \mu_n )_{n=1}^{\infty}$ converges in moments 
to the distribution $\mu^{(r,d)}$ from (\ref{eqn:110a}). 
\end{proposition}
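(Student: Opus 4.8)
The plan is to prove the two parts of Proposition \ref{prop:110} more or less independently, since part (1) is a positivity statement about the fixed functional $\mu^{(\infty,d)}$ while part (2) is a convergence-in-moments statement connecting the $\mu_n$ to that functional. For part (2) I would appeal directly to the general exchangeable CLT machinery recalled in Remark \ref{rem:18} and (presumably) Theorem \ref{thm:25}. Concretely, the centered star-generators $(\gamma_n - \tfrac{1}{d})_{n=1}^{\infty}$ form an exchangeable sequence in $(\bC[S_{\infty}], \varphi_d)$ whose associated function on partitions is $\bt_d$ from Notation \ref{def:19}(2). The first thing to verify is the singleton-vanishing condition (\ref{eqn:18a}): because we have centered, $\bt_d(\pi) = 0$ whenever $\pi$ has a block of size $1$, since such a block forces a factor $\varphi_d(\gamma_j - \tfrac{1}{d}) = \varphi_d(\gamma_j) - \tfrac{1}{d} = (1/d)^{1} - \tfrac{1}{d} = 0$ to appear (using that each star-generator is a transposition, so $\|\gamma_j\| = 1$). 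Granting this vanishing condition, Theorem \ref{thm:25} immediately yields that $(\mu_n)$ converges in moments to the restriction of $\mu^{(\infty,d)}$ to $\bC\langle X_1, \ldots, X_r\rangle$, which is exactly $\mu^{(r,d)}$; so part (2) should reduce to checking hypotheses and citing the general theorem.

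For part (1), the positivity of $\mu^{(\infty,d)}$, the cleanest route is again to realize the functional as a genuine limit of states rather than to manipulate the combinatorial formula (\ref{eqn:17b}) directly. Each $\mu_n$ is by construction the joint distribution in $(\bC[S_{\infty}], \varphi_d)$ of the selfadjoint elements $s_{1,n}, \ldots, s_{r,n}$, and since $\varphi_d$ is positive (as noted in the introduction, it is an extremal character), each $\mu_n$ is positive, i.e. $\mu_n(P^{*}P) \geq 0$ for all $P$. Positivity is preserved under convergence in moments: if $\mu_n(P^{*}P) \geq 0$ for every $n$ and $\mu_n(P^{*}P) \to \mu^{(r,d)}(P^{*}P)$, then $\mu^{(r,d)}(P^{*}P) \geq 0$. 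Thus part (1) would follow from part (2) together with the positivity of $\varphi_d$, and one gets positivity on all of $\bC\langle\{X_p \mid p \in \bN\}\rangle$ by letting $r$ range over all of $\bN$ (every polynomial involves only finitely many indeterminates, so it lies in some $\bC\langle X_1, \ldots, X_r\rangle$).

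I would therefore organize the write-up to prove part (2) first and deduce part (1) from it, which is slightly against the stated order but logically tighter. The main obstacle I anticipate is ensuring that the hypotheses of the general exchangeable CLT of \cite{BoSp1996} are met in exactly the form stated in Theorem \ref{thm:25} — in particular, confirming that the $s_{p,n}$ built from the \emph{injective} map $\ugamma$ (so that distinct pairs $(p,i)$ pick out distinct star-generators) genuinely produce the exchangeable structure the theorem requires, and that the normalization $\tfrac{1}{\sqrt n}$ and centering by $\tfrac{n}{d}$ match the conventions there. Once the translation is pinned down, the argument is essentially a matter of identifying the associated function on partitions as $\bt_d$, verifying (\ref{eqn:18a}), and invoking the cited theorem; the positivity then comes for free by the limit-of-states argument above.
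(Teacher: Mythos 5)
There is a genuine gap at the heart of your part (2): you treat the vanishing condition (\ref{eqn:18a}) as an automatic consequence of centering, asserting that a singleton block ``forces a factor $\varphi_d ( \gamma_j - \frac{1}{d} ) = 0$ to appear''. In a non-commutative probability space this factorization is precisely what must be proved, not assumed: the centered singleton variable sits interleaved inside the word, and $\varphi_d$ of a product of group elements is not a product of $\varphi_d$'s. Concretely, writing $\sigma_1 = \gamma_{\uI (1)} \cdots \gamma_{\uI (j-1)}$ and $\sigma_2 = \gamma_{\uI (j+1)} \cdots \gamma_{\uI (k)}$, what you need is $\varphi_d ( \sigma_1 \gamma_{\uI (j)} \sigma_2 ) = \varphi_d ( \gamma_{\uI (j)} ) \cdot \varphi_d ( \sigma_1 \sigma_2 )$, i.e. the length identity $\| \sigma_1 \gamma_{\uI (j)} \sigma_2 \| = 1 + \| \sigma_1 \sigma_2 \|$ when the index $\uI (j)$ occurs only once. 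This is the singleton-factorization property of Definition \ref{def:22} for the star-generators, and proving it is the substantive content of the paper's Section 4 (Proposition \ref{prop:44}(2)): by traciality one reduces to $\| \gamma_{\uI (j)} \sigma_2 \sigma_1 \| = 1 + \| \sigma_2 \sigma_1 \|$, and then one analyzes the disjoint cycle decomposition of $\sigma_2 \sigma_1$, using that $\uI (j) + 1$ is a fixed point of $\sigma_2 \sigma_1$ and splitting into cases according to whether $1$ is one as well. Only after this step (together with Proposition \ref{prop:23} and Remark \ref{rem:24}, which transfer exchangeability and singleton-factorization to the centered sequence and convert factorization plus centering into (\ref{eqn:18a})) does Theorem \ref{thm:25} apply. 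Exchangeability of $( \gamma_n )_{n=1}^{\infty}$ is likewise not free --- the paper proves it in Proposition \ref{prop:44}(1) by conjugating with a permutation $\theta$ fixing $1$ and invoking the trace property of $\varphi_d$ --- though you do at least flag that as a hypothesis needing verification, whereas the singleton condition you claim to have verified by an argument that does not work.

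By contrast, your deduction of part (1) from part (2) is sound, and is a mild departure from the paper: the paper obtains positivity of $\mu^{( \infty , d)}$ as part (1) of the cited Theorem \ref{thm:25}, whereas you re-derive it from the facts that each $\mu_n$ is positive (since $\varphi_d$ is positive and the $s_{p,n}$ are selfadjoint), that positivity of $\mu_n (P^{*}P)$ survives limits in moments, and that every polynomial in $\bC \langle \, \{ X_p \mid p \in \bN \} \, \rangle$ lies in some $\bC \langle X_1, \ldots , X_r \rangle$. That limit-of-states argument is correct and self-contained modulo part (2), and is essentially how such positivity statements are proved in the first place; but it cannot repair the gap above, since part (2) itself rests on the unverified singleton-factorization hypothesis.
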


\begin{proposition}  \label{prop:111}
Let $d, r$ be in $\bN$.
The linear functional $\nu^{( \infty , d)}$ on
$\bC \langle  \{ X_p \mid p \in \bN \} \rangle$ 
which was introduced in Notation \ref{def:19}(1) is positive. 
The restriction of $\nu^{( \infty , d)}$ to the subalgebra 
$\bC \langle X_1, \ldots , X_r \rangle$ of 
$\bC \langle \, \{ X_p \mid p \in \bN \} \, \rangle$ 
is precisely equal to the distribution $\nu^{(r,d)} \in \cD_r$
that appeared (in connection to GUE matrices) in Definition
\ref{def:16}.
\end{proposition}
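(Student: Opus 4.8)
The plan is to prove first the more structural half of the proposition — that the restriction of $\nu^{( \infty , d)}$ to $\bC \langle X_1, \ldots , X_r \rangle$ equals the joint GUE distribution $\nurd$ — and then to read off positivity from it. The key observation is that both functionals are, by construction, sums indexed by the very same family of pair-partitions. On the GUE side the Wick calculus for the independent matrices $M_1, \ldots , M_r$ (with the normalization recorded in Remark \ref{rem:52}) expands $\nurd ( X_{\uI (1)} \cdots X_{\uI (k)} )$ as a sum over those $\pi \in \cP_2 (k)$ with $\pi \leq \Ker ( \uI )$, the constraint $\pi \leq \Ker ( \uI )$ being forced by independence (entries of two distinct $M_p$'s carry no joint Wick contribution). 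On the star-generator side, formula (\ref{eqn:17b}) with $\bt = \bu_d$ writes $\nu^{( \infty , d)} ( X_{\uI (1)} \cdots X_{\uI (k)} )$ as a sum over exactly the same index set $\{ \pi \in \cP_2 (k) \mid \pi \leq \Ker ( \uI ) \}$. So the task reduces to a term-by-term comparison of the two weight systems.

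Next I would compute the two weights attached to a fixed $\pi$. On the GUE side the genus expansion gives the weight $d^{-2 g ( \pi )}$, where $g ( \pi )$ denotes the genus of the surface obtained by gluing in pairs, according to $\pi$, the sides of a $k$-gon. On the star-generator side Definition \ref{def:17}(1) gives the weight $\bu_d ( \pi ) = \varphi_d ( \gamma_{\uJ (1)} \cdots \gamma_{\uJ (k)} )$ for any $\uJ$ with $\Ker ( \uJ ) = \pi$; since $\pi$ is a pair-partition this word uses $k/2$ distinct star-generators, each appearing exactly twice, so that their product is a single permutation $\sigma_{\pi} \in S_{\infty}$ and, by (\ref{eqn:1b}), $\bu_d ( \pi ) = (1/d)^{ \| \sigma_{\pi} \| }$. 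Matching the two weights is therefore equivalent to the combinatorial identity
\[
\| \sigma_{\pi} \| = 2 \, g ( \pi ) , \qquad \pi \in \cP_2 (k) .
\]

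I expect this length-equals-twice-the-genus identity to be the main obstacle, and the step on which I would spend the most care. My preferred route is a ribbon-graph/surface argument: the doubled word in star-generators, all of which share the common symbol $1$, encodes a one-vertex map whose edges are the $k/2$ pairs of $\pi$ and whose faces are counted by the cycles of $\sigma_{\pi}$, so that writing $\| \sigma_{\pi} \| = |\mathrm{supp}( \sigma_{\pi} )| - \#(\text{cycles of } \sigma_{\pi})$ and feeding the vertex/edge/face counts into the Euler characteristic $2 - 2 g ( \pi )$ should yield the identity directly. A more elementary fallback is an induction: when $\pi$ has two paired positions that are adjacent, the corresponding factor $\gamma_a \gamma_a = \id$ cancels and one reduces to a shorter pair-partition without changing either side — this already settles every non-crossing $\pi$ (there $\sigma_{\pi} = \id$ and $g ( \pi ) = 0$) — while the genuinely crossing cases are handled by tracking how a single genus-reducing move shortens $\sigma_{\pi}$ by exactly two. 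Combinatorial facts about such star factorizations are available in the literature cited in the introduction, e.g. \cite{Fe2012}, and the small cases $k = 2, 4$ already confirm the pattern.

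Once the identification $\nu^{( \infty , d)} \mid \bC \langle X_1, \ldots , X_r \rangle = \nurd$ is in hand, positivity of $\nu^{( \infty , d)}$ follows at once. By (\ref{eqn:17b}) the value $\nu^{( \infty , d)} ( X_{\uI (1)} \cdots X_{\uI (k)} )$ depends on $\uI$ only through $\Ker ( \uI )$, so $\nu^{( \infty , d)}$ is invariant under relabelling of the indeterminates; hence any given $P \in \bC \langle \, \{ X_p \mid p \in \bN \} \, \rangle$ may be assumed to involve only $X_1, \ldots , X_r$ for some $r$. For such $P$ the identification gives $\nu^{( \infty , d)} ( P^{*} P ) = \nurd ( P^{*} P )$, and the latter is $\geq 0$ because $\nurd$ is the honest joint distribution of the selfadjoint random matrices $M_1, \ldots , M_r$, for which $( E \circ \tr_d ) \bigl( P(M)^{*} P(M) \bigr) \geq 0$.
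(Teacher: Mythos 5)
Your overall reduction is the same as the paper's, and it is sound: expand both $\nurd ( X_{\uI (1)} \cdots X_{\uI (k)} )$ (via the Wick formula) and $\nu^{( \infty , d)} ( X_{\uI (1)} \cdots X_{\uI (k)} )$ (via Definition \ref{def:17}(2)) as sums over $\{ \pi \in \cP_2 (k) \mid \pi \leq \Ker ( \uI ) \}$, and match the weights term by term. Your two weights are computed correctly: the GUE weight is $d^{-2 g( \pi )}$ and the star-generator weight is $(1/d)^{ || \sigma_{\pi} || }$ (your $\sigma_{\pi}$ is the paper's $q_{\pi}$), so everything hinges on the identity $|| \sigma_{\pi} || = 2 g ( \pi )$ --- which, after Euler's formula, is exactly the identity the paper isolates in Remark \ref{rem:515}(1), namely $\# ( q_{\pi} \mid \{ 1, \ldots , h+1 \} ) = \# ( \mbox{c}_{1 \to 2h} \perm_{\pi} \mid \{ 1, \ldots , 2h \} )$, i.e.\ the content of Proposition \ref{prop:56}. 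Your positivity argument, granted the identification, is fine and a bit more concrete than the paper's (which invokes Theorem \ref{thm:25}(1) for the exchangeable GUE sequence rather than positivity of $E \circ \tr_d$ directly). The gap is that the key identity is never actually proven. Your ribbon-graph argument is circular: the one-vertex map whose edges are the pairs of $\pi$ is the gluing of the $2h$-gon along $\pi$, and its faces are, by definition, the cycles of $\mbox{c}_{1 \to 2h} \perm_{\pi}$; the assertion that these faces ``are counted by the cycles of $\sigma_{\pi}$'' is word-for-word the statement to be proven, not something Euler's formula can deliver. Euler's formula only converts an equality of cycle counts into an equality of exponents; it cannot produce the required correspondence between faces and cycles of $\sigma_{\pi}$.

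Your fallback induction has the same problem. The adjacent-pair cancellation step is correct and settles non-crossing $\pi$, but for a crossing $\pi$ with no adjacent block (e.g.\ $\{ \, \{1,3\}, \{2,4\} \, \}$) you appeal to ``a single genus-reducing move [that] shortens $\sigma_{\pi}$ by exactly two'' without saying what the move is, why one always exists, or why it changes $|| \sigma_{\pi} ||$ by exactly $2$; verifying that last point is essentially as hard as the original identity. This missing step is precisely where the paper invests its technical work: Lemma \ref{lemma:59} rewrites $\bu_d ( \pi )$ as a cycle count for $q_{\pi}$; the case analysis of Remark \ref{rem:510}, formalized in Lemma \ref{lemma:512}, identifies the cycles of $q_{\pi}$ on $\{ 1, \ldots , h+1 \}$ with the cycles of the permutation induced by $\perm_{\pi} \cdot \mbox{c}_{2h+1 \to 1}$ on the set $\{ b_0, b_1, \ldots , b_h \}$ of larger block-elements (with $b_0 = 2h+1$); Lemma \ref{lemma:513} shows every orbit of $\perm_{\pi} \cdot \mbox{c}_{2h+1 \to 1}$ inside $\{ 1, \ldots , 2h+1 \}$ meets that set; and a final short argument trades $\mbox{c}_{2h+1 \to 1}$ for $\mbox{c}_{1 \to 2h}$. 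Some argument at this level of detail (or a genuinely carried-out surface/monodromy argument) is needed before your proposal constitutes a proof of Proposition \ref{prop:111}.
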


\begin{remark}  \label{rem:112}
Proposition \ref{prop:110} is a direct application of the exchangeable 
CLT theorem -- the only verifications that need to be done are
that the sequence of centered star-generators 
$( \gamma_n  - \frac{1}{d} )_{n=1}^{\infty}$ is indeed exchangeable
in $( \bC [ S_{\infty} ], \varphi_d )$, and that the associated  
function on partitions $\bt_d : \sqcup_{k=1}^{\infty} \cP (k) \to \bC$
satisfies the requirement (\ref{eqn:18a}) mentioned in Remark \ref{rem:18}.
We will make these verifications in Section 4 below.

Proposition \ref{prop:111} is of a different nature (has to be, 
since the function on partitions 
$\bu_d : \sqcup_{k=1}^{\infty} \cP (k) \to \bC$ associated to 
$( \gamma_n )_{n=1}^{\infty}$ clearly does not satisfy 
(\ref{eqn:18a})).  The proof of this proposition is
obtained by the direct examination of certain joint moments of the 
$\gamma_n$'s, which we then compare against the corresponding joint 
moments of a relevant exchangeable sequence of GUE matrices.
The review of GUE matrices and the proof of Proposition \ref{prop:111}
are made in Section 5 of the paper.

Upon invoking the interpretations of the distributions 
$\mu^{(r,d)},  \nu^{(r,d)}$ which follow from Propositions 
\ref{prop:110} and \ref{prop:111}, 
there is only one item remaining to be proved in Theorem 
\ref{thm:16}, namely Equation (\ref{eqn:16b}) in part (2) of 
the theorem.  This item is just a statement about what happens 
to commuting e.g.f's when we perform a translation of an 
exchangeable sequence, 
and is covered by the following general result.
\end{remark}

\begin{proposition}   \label{prop:113}
Let $( \cA , \varphi )$ be a $*$-probability space.
Let $\ans$ be an exchangeable sequence of selfadjoint 
elements of $\cA$, with associated function 
$\bt : \sqcup_{k=1}^{\infty} \cP (k) \to \bC$, such 
that $\bt$ fulfills the vanishing condition indicated 
in (\ref{eqn:18a}).  Pick a $\lambda \in \bR$ and put
\begin{equation}  \label{eqn:113a}
b_n := a_n + \lambda, \ \ n \in \bN .
\end{equation}
This creates a new exchangeable sequence $\bns$, which 
also has an associated function on partitions,
$\bu : \sqcup_{k=1}^{\infty} \cP (k) \to \bC$.  Let 
$\mu, \nu : \bC \langle \{ \, X_p \mid p \in \bN \} \, \rangle
\to \bC$ be the linear functionals obtained by using $\bt$ 
and $\bu$, respectively, via the recipe described in 
Definition \ref{def:17}(2).  Finally, pick an $r \in \bN$ 
and let $f,g$ denote the commuting e.g.f's of the functionals
$\mu \mid \bC \langle X_1, \ldots , X_r \rangle$ and
respectively
$\nu \mid \bC \langle X_1, \ldots , X_r \rangle$.
Then one has:
\begin{equation}  \label{eqn:113b}
f(z_1, \ldots , z_r) \cdot
e^{\lambda^2 (z_1^2 + \cdots + z_r^2)/2} = 
g(z_1, \ldots , z_r).
\end{equation}
\end{proposition}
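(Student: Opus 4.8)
The plan is to rewrite each of the two commuting e.g.f's as a power series in the single variable $w := z_1^2 + \cdots + z_r^2$, and thereby reduce the claimed identity (\ref{eqn:113b}) to a one-variable comparison of coefficients. The starting point is to substitute the defining formula (\ref{eqn:17b}) for the moments of $\mu$ into the series (\ref{eqn:15a}) for $f$ and then interchange, at each fixed degree $k$ (so that only finite sums are involved), the summation over tuples $\uI \in \{1, \ldots, r\}^k$ with the summation over pair-partitions. For a fixed $\sigma \in \cP_2 (k)$ with $k = 2m$, the inner sum $\sum_{\uI \, : \, \sigma \le \Ker (\uI)} z_{\uI (1)} \cdots z_{\uI (k)}$ runs over those $\uI$ that are constant on each block of $\sigma$; since each of the $m$ blocks contributes a free factor $\sum_{p=1}^{r} z_p^2 = w$, this inner sum equals $w^m$. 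Hence $f = \sum_{m \ge 0} \frac{T_m}{(2m)!} w^m$, where $T_m := \sum_{\sigma \in \cP_2 (2m)} \bt (\sigma)$ (with $T_0 = 1$), and likewise $g = \sum_{m \ge 0} \frac{U_m}{(2m)!} w^m$ with $U_m := \sum_{\sigma \in \cP_2 (2m)} \bu (\sigma)$.

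The key step is to determine how $\bu$ arises from $\bt$ on a pair-partition. Fixing $\sigma \in \cP_2 (2m)$ together with a tuple $\uJ$ such that $\Ker (\uJ) = \sigma$, I would expand the product $b_{\uJ (1)} \cdots b_{\uJ (2m)} = \prod_t (a_{\uJ (t)} + \lambda)$ into $2^{2m}$ ordered terms indexed by the subset $S \subseteq \{1, \ldots, 2m\}$ of positions at which the factor $a_{\uJ (t)}$ (rather than the scalar $\lambda$) is selected. Applying $\varphi$ and invoking exchangeability, the term for $S$ equals $\lambda^{2m - |S|} \, \bt (\sigma |_S)$, where $\sigma |_S$ denotes the partition of $S$ induced by $\sigma$. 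Here the hypothesis (\ref{eqn:18a}) enters decisively: restricting a pair-partition to $S$ produces a singleton block precisely when $S$ splits some pair of $\sigma$, so that $\bt (\sigma |_S) = 0$ unless $S$ is a union of blocks of $\sigma$. Only the ``$\sigma$-saturated'' sets survive, and writing such an $S$ as $\bigcup B$ for a choice $B$ of blocks gives
\[
\bu (\sigma) = \sum_{B \subseteq \mathrm{blocks} (\sigma)}
\lambda^{2(m - |B|)} \, \bt (\sigma |_B) .
\]

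Finally I would sum this relation over all $\sigma \in \cP_2 (2m)$, reorganizing via the bijection that sends a pair $(\sigma, B)$ to the data of a subset $P \subseteq \{1, \ldots, 2m\}$ of size $2j$ (the positions covered by the chosen blocks), a pair-partition $\tau$ of $P$ in the role of $\sigma |_B$, and an arbitrary pair-partition $\rho$ of the complement $\{1, \ldots, 2m\} \setminus P$ (the discarded blocks). Since $\sum_{\tau \in \cP_2 (P)} \bt (\tau) = T_j$ depends only on $|P| = 2j$, since the complement admits $(2(m-j) - 1)!!$ pairings, and since there are $\binom{2m}{2j}$ choices of $P$, this yields $U_m = \sum_{j=0}^{m} \binom{2m}{2j} \, (2(m-j) - 1)!! \, \lambda^{2(m-j)} \, T_j$. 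Dividing by $(2m)!$ and using $(2\ell - 1)!! = (2\ell)! / (2^\ell \, \ell !)$ converts this into $\frac{U_m}{(2m)!} = \sum_{j + \ell = m} \frac{T_j}{(2j)!} \cdot \frac{(\lambda^2 / 2)^\ell}{\ell !}$, which is exactly the coefficient of $w^m$ in $f \cdot e^{\lambda^2 w / 2}$; comparing all coefficients proves (\ref{eqn:113b}). The main obstacle is the bookkeeping in the middle step — pinning down that the vanishing condition forces $\sigma$-saturation and that the surviving weight is $\lambda$ raised to the number of positions lying in discarded pairs. Once that is settled, the reorganization into ``kept'' and ``discarded'' pairings is routine, and the two exponential factors match automatically.
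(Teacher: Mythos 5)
Your proposal is correct and follows essentially the same route as the paper: your reduction to a single-variable series in $w = z_1^2 + \cdots + z_r^2$ is the content of the paper's Lemma \ref{lemma:32}, and your expansion of $\bu(\sigma)$ over $\sigma$-saturated subsets followed by the kept/discarded-pairing reorganization is exactly the paper's Lemma \ref{lemma:31}. The only differences are cosmetic (you sum monomials directly rather than extracting coefficients, and you carry out the two steps in the opposite order); incidentally, your formula for $U_m$ correctly retains the factor $\lambda^{2(m-j)}$ that is inadvertently dropped in the paper's Equation (\ref{eqn:31h}).
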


$\ $

Proposition \ref{prop:113} can in particular be applied 
to the case when $a_n = \gamma_n - \frac{1}{d}$ in the 
$*$-probability space $( \bC [ S_{\infty} ], \varphi_d )$, 
with $\lambda = 1/d$, and this leads precisely to the 
statement of Theorem \ref{thm:16}(2).

$\ $

\subsection{Organization of the paper.}

$\ $

\noindent
In view of the discussion in Section 1.3, what we are left 
to do is show the proofs of the Propositions \ref{prop:110},
\ref{prop:111} and \ref{prop:113}.

Besides the present Introduction, the paper has four sections.
Section 2 is devoted to the review of the relevant facts related 
to exchangeable sequences, and of the exchangeable CLT theorem.
This is followed by the proof of Proposition \ref{prop:113},
which is done in Section 3.
In Section 4 we return to the framework of the $*$-probability space 
$( \bC [S_{\infty}] , \varphi_d )$, and we verify that the centered 
star-generators $( \gamma_n - \frac{1}{d} )_{n=1}^{\infty}$ have indeed 
the required properties which ensure the validity of Proposition 
\ref{prop:110}.  In the final Section 5 we establish the connection 
(formalized in Proposition \ref{prop:56}) between star-generators and
an exchangeable sequence of $d \times d$ GUE matrices with independent
entries, and we prove Proposition \ref{prop:111}.

$\ $

\section{Review of CLT for exchangeable sequences}

\begin{definition-and-remark}   \label{def:21}
We collect here a few basic notions and facts,
some of them already mentioned in the Introduction,
related to exchangeable sequences. 

Let $( \cA , \varphi )$ be a $*$-probability space and let 
$\ans$ be a sequence of selfadjoint elements of $\cA$.
Quantities of the form 
\[
\varphi (a_{\uI (1)} \cdots a_{\uI (k)} ), 
\mbox{ with $k \in \bN$ and $\uI  : \{ 1, \ldots , k \} \to \bN$}
\]
go under the name of {\em joint moments} of $\ans$.  We say that 
the sequence $\ans$ is {\em exchangeable} to mean that its joint 
moments are invariant under the natural action of $S_{\infty}$,  
that is:
\begin{equation}   \label{eqn:21a}
\left\{   \begin{array}{c}
\varphi (a_{\uI (1)} \cdots a_{ \uI (k)} ) 
         = \varphi (a_{\uJ (1)} \cdots a_{\uJ (k)} )      \\
\mbox{for every $k \in \bN$ and 
      $\uI , \uJ  : \{ 1, \ldots , k \} \to \bN$}         \\
\mbox{for which $\exists \, \tau \in S_{\infty}$
      such that $\uJ  = \tau \circ \uI$.}
\end{array}   \right.
\end{equation}

Recall from Equation (\ref{eqn:13x}) in the Introduction 
that, upon treating a tuple $\uI \in \bN^k$ as a function 
$\uI : \{ 1, \ldots , k \} \to \bN$, one defines
the kernel $\Ker ( \uI )$ as the partition of 
$\{ 1, \ldots , k \}$ into level-sets of $\uI$: two 
numbers $p,q \in \{ 1, \ldots , k \}$ belong 
to the same block of $\Ker (\uI)$ if and only if 
$\uI(p) = \uI(q)$.  It is easily seen that for two tuples
$\uI,\uJ : \{ 1, \ldots , k \} \to \bN$, the existence of 
a permutation $\tau \in S_{\infty}$ such that 
$\uJ = \tau \circ \uI$ is equivalent to the fact that
$\Ker (\uI) = \Ker (\uJ)$.   
Hence the definition of 
what it means for $\ans$ to be exchangeable can also be 
phrased in the way it was done in Equation (\ref{eqn:13y}) 
of the Introduction, which leads naturally to 
considering the function on partitions
$\bt : \sqcup_{k=1}^{\infty} \cP (k) \to \bC$ 
associated to $\ans$ in Definition \ref{def:17}(1).

Note that if $\ans$ is exchangeable, then the $a_n$'s are 
in particular {\em identically distributed}.  That is, for 
every $k \in \bN$ one has a common ``moment of order $k$'' 
for all the $a_n$'s:
\[
\varphi (a_1^k ) = \varphi (a_2^k ) = \cdots = \bt ( 1_k ),
\]
where, following standard combinatorics literature, we let 
$1_k \in \cP (k)$ denote the partition of $\{ 1, \ldots , k \}$
into only one block.
\end{definition-and-remark}

We now turn to the important vanishing condition that was 
mentioned in Equation (\ref{eqn:18a}) of Remark \ref{rem:18}.
For our purposes, it is convenient to phrase things in 
a way that also allows for translations of a given sequence 
of selfadjoint elements, and we thus proceed as follows.

\begin{definition}   \label{def:22}
Let $( \cA , \varphi )$ be a $*$-probability space and let 
$\ans$ be a sequence 
\footnote{The sequence $\ans$ considered in this definition 
is not assumed to be exchangeable.}
of selfadjoint 
elements of $\cA$.  We say that $\ans$ has the 
{\em singleton-factorization property} to mean that the 
following implication holds:
\begin{equation}   \label{eqn:22a}
\left\{   \begin{array}{l}
\mbox{Let } k \in \bN , \uI : \{ 1, \ldots , k \} \to \bN 
\mbox{ and $j_o \in \{ 1, \ldots , k \}$ be}                \\
\mbox{ $\ $ such that $\uI (j) \neq \uI (j_o)$ for all 
       $j \neq j_o$ in $\{ 1, \ldots , k \}$. }             \\
\mbox{Then }
\varphi ( a_{\uI (1)} \cdots a_{ \uI (k)} ) =  
\varphi ( a_{\uI (j_o)} ) \cdot 
\varphi \bigl( a_{\uI (1)} \cdots a_{\uI (j_o -1)} 
a_{\uI (j_o +1)} \cdots a_{\uI (k)} \bigr) .
\end{array}   \right.
\end{equation}
\end{definition}

The next proposition records the useful fact that both exchangeability
and singleton-factorization property are preserved when one does a 
translation of the sequence of $a_n$'s.  The proof of the proposition 
is straightforward (both statements (1) and (2) are verified via 
induction arguments on the length of the relevant joint moments), 
and is left as exercise to the reader.

\begin{proposition}  \label{prop:23}
Let $( \cA , \varphi )$ be a $*$-probability space, let $\ans$ 
be a sequence of selfadjoint elements of $\cA$, 
and let $\lambda$ be a real number.  We put 
$b_n := a_n + \lambda$, $n \in \bN$.

\vspace{6pt}

(1) Suppose that $\ans$ is exchangeable.  Then $\bns$ is 
exchangeable as well.

\vspace{6pt}

(2) Suppose that $\ans$ has the singleton-factorization property.  
Then $\bns$ has the singleton-factorization property as well.
\hfill  $\square$
\end{proposition}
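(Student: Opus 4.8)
The plan is to prove Proposition \ref{prop:23} by two independent induction arguments on $k$, the length of the joint moments involved, exploiting the fact that each $b_n = a_n + \lambda$ expands into a sum of products of $a_n$'s and scalar factors of $\lambda$.

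For part (1), I would start from the exchangeability of $\ans$ as phrased via kernels in \eqref{eqn:13y}: $\varphi(a_{\uI(1)} \cdots a_{\uI(k)})$ depends only on $\Ker(\uI)$. The key observation is that a joint moment of the $b_n$'s expands multilinearly:
\[
\varphi(b_{\uI(1)} \cdots b_{\uI(k)}) =
\sum_{S \subseteq \{1, \ldots, k\}} \lambda^{k - |S|} \,
\varphi\Bigl( \prod_{j \in S}^{\rightarrow} a_{\uI(j)} \Bigr),
\]
where for each subset $S$ the product on the right runs over the indices in $S$ in increasing order. Each surviving term is a genuine joint moment of the $\ans$-sequence in the variables indexed by $S$, and its value depends only on $\Ker(\uI|_S)$ by exchangeability of $\ans$. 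If $\uI, \uJ \in \bN^k$ satisfy $\Ker(\uI) = \Ker(\uJ)$, then for every subset $S$ one also has $\Ker(\uI|_S) = \Ker(\uJ|_S)$, so the corresponding terms in the two expansions agree one-by-one. Summing over $S$ gives $\varphi(b_{\uI(1)} \cdots b_{\uI(k)}) = \varphi(b_{\uJ(1)} \cdots b_{\uJ(k)})$, which is exchangeability of $\bns$. (This direct expansion actually bypasses the need for induction in part (1), though an induction on $k$ peeling off the last factor would work equally well.)

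For part (2) the induction on $k$ is the natural route. Fix a tuple $\uI : \{1, \ldots, k\} \to \bN$ with a singleton index $j_o$, meaning $\uI(j) \neq \uI(j_o)$ for all $j \neq j_o$. Expanding $b_{\uI(j_o)} = a_{\uI(j_o)} + \lambda$ at position $j_o$ splits the moment into two pieces: one with $a_{\uI(j_o)}$ sitting at position $j_o$, and one where position $j_o$ is replaced by the scalar $\lambda$. In the first piece I would further expand the remaining $b$'s into $a$'s and use the singleton-factorization property of $\ans$ (which applies because $\uI(j_o)$ remains a singleton among the surviving $a$-indices) to factor out $\varphi(a_{\uI(j_o)})$. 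In the second piece the factor $\lambda$ multiplies a joint moment of $b$'s over the index set with $j_o$ deleted, which by the induction hypothesis (the deleted tuple is shorter, and $\uI(j_o)$ no longer appears) already factorizes appropriately. The main obstacle is bookkeeping: I must show that recombining these two pieces reproduces exactly $\varphi(b_{\uI(j_o)}) \cdot \varphi(b_{\uI(1)} \cdots \widehat{b_{\uI(j_o)}} \cdots b_{\uI(k)})$, where $\varphi(b_{\uI(j_o)}) = \varphi(a_{\uI(j_o)}) + \lambda$. The clean way is to verify that the two expansions factor through the common scalar $(\varphi(a_{\uI(j_o)}) + \lambda)$; the term carrying $a_{\uI(j_o)}$ supplies the $\varphi(a_{\uI(j_o)})$ contribution and the term carrying $\lambda$ supplies the $\lambda$ contribution, each multiplying the \emph{same} reduced moment $\varphi(b_{\uI(1)} \cdots \widehat{b_{\uI(j_o)}} \cdots b_{\uI(k)})$. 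The hard part is therefore confirming that the reduced factor is identical in both pieces — this follows because neither piece any longer involves the index $\uI(j_o)$, so both reduce to the same moment of the $b$'s over the complementary positions.

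Throughout, the only structural inputs are the multilinearity of $\varphi$ in each slot and the two defining properties of $\ans$; selfadjointness plays no role in the algebra and is needed only to keep us inside the class of sequences under consideration. Both arguments are elementary, which is why the authors relegate them to the reader; the genuine (if modest) care is in tracking how a singleton index behaves under the binomial-type expansion in part (2).
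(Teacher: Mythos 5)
Your proposal is correct, and it fills in a proof that the paper itself does not give: the authors state only that both parts ``are verified via induction arguments on the length of the relevant joint moments'' and leave the details as an exercise. Your argument is a legitimate (and arguably cleaner) realization of that exercise, since the binomial-type expansion over subsets $S \subseteq \{1,\ldots,k\}$ handles all lengths at once and bypasses induction entirely: in part (1) the term-by-term matching works because $\Ker(\uI) = \Ker(\uJ)$ forces $\Ker(\uI|_S) = \Ker(\uJ|_S)$ for every $S$, and in part (2) expanding all positions except $j_o$ reduces each surviving $a$-moment to one in which $\uI(j_o)$ is still a singleton, so the factorization hypothesis on $\ans$ applies term by term and re-sums to $\varphi(a_{\uI(j_o)})$ times the reduced $b$-moment, while the $\lambda$-piece trivially contributes $\lambda$ times the same reduced moment.

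One small point of cleanup: in part (2) your appeal to an induction hypothesis for the second piece is a red herring. The term $\lambda \cdot \varphi\bigl(b_{\uI(1)} \cdots \widehat{b_{\uI(j_o)}} \cdots b_{\uI(k)}\bigr)$ needs no further factorization --- the reduced tuple may have no singleton at all, and none is needed; you only need this piece exactly as it stands, so that the two pieces recombine into $\bigl(\varphi(a_{\uI(j_o)}) + \lambda\bigr)$ times the common reduced moment. Your final ``clean way'' paragraph already makes this correct argument, so the proof stands; just strike the induction language, which your own expansion renders unnecessary.
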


\begin{remark}  \label{rem:24}
Let $( \cA , \varphi )$ be a $*$-probability space and let 
$\ans$ be a sequence of selfadjoint elements of $\cA$ 
such that:

\vspace{6pt}

(i) $\ans$ is exchangeable, and 

\vspace{6pt}

(ii) $\ans$ has the singleton-factorization property.

\vspace{6pt}

\noindent
Let $\bt : \sqcup_{k=1}^{\infty} \to \bC$ be the function on
partitions associated to $\ans$, in the way indicated in 
Definition \ref{def:17}(1).  The additional assumption (ii) 
gives a factorization formula satisfied by $\bt$: if 
$\pi \in \cP (k)$ has a block $V$ with $|V| = 1$ and if
$\pi_o \in \cP (k-1)$ is obtained from the restriction of 
$\pi$ to $\{ 1, \ldots , k \} \setminus V$ (by relabelling the 
elements of $\{ 1, \ldots , k \} \setminus V$ to become 
$1, \ldots , k-1$, in increasing order), then we have 
\begin{equation}   \label{eqn:24a}
\bt ( \pi ) = \bt ( 1_1 ) \cdot \bt ( \pi_o ) ,
\end{equation}
where $1_1$ is the unique partition in $\cP (1)$.
Repeated use of the formula (\ref{eqn:24a}) shows that, in 
this case, the function $\bt$ is completely determined if we
know its values on partitions $\pi$ which have no blocks of 
cardinality $1$ and if, in addition to that, we know what is 
$\bt ( 1_1 )$.

\vspace{6pt}

\noindent
[A concrete example: the common value of all the joint moments
$\varphi (a_i a_j a_i a_k )$ with $i \neq j \neq k \neq i$ 
in $\bN$ is recorded as $\bt ( \pi )$ for  $\pi = 
\{ \, \{ 1,3 \}, \, \{ 2 \} , \, \{ 4 \} \, \} \in \cP (4)$.
The singleton-factorization property ensures that this 
particular $\bt ( \pi )$ factors as 
$\bt ( 1_1 )^2 \cdot \bt (1_2)$, where $1_1$ is the unique 
partition of the set $\{ 1 \}$ and $1_2$ is the partition of 
$\{ 1,2 \}$ into a single block.]

\vspace{6pt}

Now, besides the assumptions (i) + (ii) above, suppose we also 
know that 

\vspace{6pt}

(iii) the  $a_n$'s are centered (we have $\varphi (a_n) = 0$ 
for all $n \in \bN$).

\vspace{6pt}

\noindent
Clearly, the assumption (iii) amounts to the fact that 
$\bt ( 1_1 ) =0$.  It is obvious that, in this case, the 
factorization rule from Equation (\ref{eqn:24a}) becomes 
precisely the vanishing condition that had been indicated 
in Equation (\ref{eqn:18a}) of Remark \ref{rem:18}.
\end{remark}

$\ $

We next review the limit theorem for exchangeable sequences 
that we want to use.  We follow the version that is indicated 
in \cite{BoSp1996} -- see Theorem 0 on page 138 and also 
Theorem 2 on page 142 of that paper.

\begin{theorem}   \label{thm:25}
(CLT for an exchangeable sequence, following \cite{BoSp1996}.)

\noindent
Let $( \cA , \varphi )$ be a $*$-probability space and let
$\ans$ be a sequence of selfadjoint elements of $\cA$ which 
satisfies the assumptions (i), (ii) and (iii) listed in the 
preceding remark.
Let $\bt : \sqcup_{k=1}^{\infty} \cP (k) \to \bC$ be the 
function on partitions associated to $\ans$ as in Definition
\ref{def:17}(1), and let $\mu : \bC 
\langle \, \{ X_p \mid p \in \bN \} \, \rangle \to \bC$
be the linear functional constructed from $\bt$ in the 
way described in Definition \ref{def:17}(2).  Then:

\vspace{6pt}

(1) $\mu$ is positive (that is, $\mu (P^{*}P) \geq 0$ for all 
$P \in \{ X_p \mid p \in \bN \} \, \rangle$).

\vspace{6pt}

(2) Consider an $r \in \bN$ and an injective function 
$\lambda : \{ 1, \ldots , r \} \times \bN \to  \bN$. 
We denote $\ua (p,n) := a_{\lambda (p,n)}$, for all 
$p \in \{ 1, \ldots , r \}$ and $n \in \bN$.  Moreover,
for every $p \in \{ 1, \ldots , r \}$ and $n \in \bN$ we put
\[
s_{p,n} := \frac{1}{\sqrt{n}} 
\Bigl( \ua (p,1) + \cdots + \ua (p,n) \Bigr) \in \cA ,
\]
and for every $n \in \bN$ we consider the joint distribution 
$\mu_n \in \cD_r$ of the $r$-tuple $s_{1,n}, \ldots , s_{r,n}$ 
(in the sense of Definition \ref{def:14}(3)).
Then the sequence $( \mu_n )_{n=1}^{\infty}$ converges in 
moments to the restriction  of the linear functional $\mu$ 
from part (1) to the subalgebra 
$\bC \langle X_1, \ldots , X_r \rangle$ of 
$\bC \langle \, \{ X_p \mid p \in \bN \} \, \rangle$.
\hfill $\square$
\end{theorem}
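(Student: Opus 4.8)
The plan is to prove part (2) first, by a direct computation of the joint moments of the normalized sums $s_{1,n}, \ldots , s_{r,n}$, and then to obtain part (1) as a soft consequence of the moment-convergence. Fixing $k \in \bN$ and a tuple $\uI = (i(1), \ldots , i(k)) \in \{ 1, \ldots , r \}^k$, the first step is to expand
\begin{equation*}
\mu_n (X_{i(1)} \cdots X_{i(k)})
= \varphi ( s_{i(1),n} \cdots s_{i(k),n} )
= \frac{1}{n^{k/2}} \sum_{\uJ \in \{ 1, \ldots , n \}^k}
\varphi \bigl( \ua (i(1), \uJ(1)) \cdots \ua (i(k), \uJ(k)) \bigr).
\end{equation*}
Since $\lambda$ is injective, two indices $\lambda (i(l), \uJ(l))$ and $\lambda (i(m), \uJ(m))$ agree exactly when the pairs $(i(l), \uJ(l))$ and $(i(m), \uJ(m))$ agree; hence, writing $\sigma \in \cP (k)$ for the kernel of the tuple $l \mapsto (i(l), \uJ(l))$, exchangeability (in the form (\ref{eqn:21a}), via Definition \ref{def:17}(1)) ensures that the summand equals $\bt ( \sigma )$. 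Grouping the sum according to the value of $\sigma$ then gives
\begin{equation*}
\mu_n (X_{i(1)} \cdots X_{i(k)})
= \frac{1}{n^{k/2}} \sum_{\sigma \leq \Ker ( \uI )}
N_n ( \sigma ) \, \bt ( \sigma ),
\end{equation*}
where $N_n ( \sigma )$ counts the $\uJ$ whose associated kernel is exactly $\sigma$, and where only $\sigma \leq \Ker ( \uI )$ can occur (two positions sharing a full $(i, \uJ)$-value must in particular share their $i$-value).

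The asymptotic analysis of this sum is the heart of the argument. I would verify that $N_n ( \sigma )$ is a polynomial in $n$ of degree equal to the number of blocks $|\sigma|$, with leading coefficient $1$: producing a $\uJ$ with kernel $\sigma$ amounts to assigning a value in $\{ 1, \ldots , n \}$ to each block of $\sigma$, subject only to the constraint that blocks lying inside a common block of $\Ker ( \uI )$ receive distinct values, and such a ``distinct-values'' count contributes $n^{|\sigma|} + O(n^{|\sigma|-1})$. Consequently the $\sigma$-term behaves like $n^{|\sigma| - k/2} \bt ( \sigma )$. At this point the hypotheses (ii) and (iii), in the combined form of the vanishing condition (\ref{eqn:18a}) recorded in Remark \ref{rem:24}, do the decisive work: $\bt ( \sigma ) = 0$ unless $\sigma$ has no singleton block, and a singleton-free partition of $\{ 1, \ldots , k \}$ satisfies $|\sigma| \leq k/2$, with equality precisely when $\sigma \in \cP_2 (k)$. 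Hence every surviving term has $|\sigma| - k/2 \leq 0$; the terms with $|\sigma| < k/2$ vanish in the limit, and those with $|\sigma| = k/2$ are exactly the pair-partitions $\pi \leq \Ker ( \uI )$, each contributing $\bt ( \pi )$. This yields
\begin{equation*}
\limn \mu_n (X_{i(1)} \cdots X_{i(k)})
= \sum_{\substack{ \pi \in \cP_2 (k) \\ \pi \leq \Ker ( \uI ) }}
\bt ( \pi )
= \mu (X_{i(1)} \cdots X_{i(k)}),
\end{equation*}
which is exactly the claim of part (2). I expect this asymptotic bookkeeping to be the main obstacle: one must argue carefully that no admissible partition can carry a strictly positive power of $n$ (precisely what the singleton-vanishing condition guarantees, by forbidding the over-refined partitions with more than $k/2$ blocks), and that the surviving pair-partitions enter with coefficient exactly $1$ rather than some spurious combinatorial factor.

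Finally, part (1) follows with no further combinatorics. Given any $r$, choose an injective $\lambda$ as in part (2); each $\mu_n$ is then the genuine joint distribution of the selfadjoint family $s_{1,n}, \ldots , s_{r,n}$ in the $*$-probability space $( \cA , \varphi )$, so that $\mu_n ( P^{*} P ) = \varphi ( q^{*} q ) \geq 0$ for every $P \in \bC \langle X_1, \ldots , X_r \rangle$, where $q \in \cA$ denotes the element obtained by substituting $s_{p,n}$ for $X_p$ in $P$. Positivity is preserved under the pointwise moment-limit established in part (2), whence $\mu \mid \bC \langle X_1, \ldots , X_r \rangle$ is positive for every $r$. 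Since any element of $\bC \langle \, \{ X_p \mid p \in \bN \} \, \rangle$ involves only finitely many of the indeterminates, this delivers positivity of $\mu$ on the whole algebra, completing the proof.
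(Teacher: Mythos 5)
Your proof is correct, but note that there is no internal proof to compare it against: the paper states Theorem \ref{thm:25} as a quoted result, deferring its proof entirely to \cite{BoSp1996} (Theorem 0, p.~138 and Theorem 2, p.~142). Your argument is precisely the standard moment-method proof underlying that cited result: expand the joint moments of the $s_{p,n}$, use injectivity of $\lambda$ and exchangeability to rewrite each summand as $\bt(\sigma)$ for the kernel partition $\sigma \leq \Ker(\uI)$, count the tuples realizing a given $\sigma$ as $N_n(\sigma) = n^{|\sigma|} + O(n^{|\sigma|-1})$ (your identification of the exact constraint --- distinct values only for $\sigma$-blocks lying in a common block of $\Ker(\uI)$ --- is right), and then invoke the singleton-vanishing condition (\ref{eqn:18a}) to conclude that only partitions with all blocks of size at least $2$ survive, among which only the pair-partitions $\pi \in \cP_2(k)$ with $\pi \leq \Ker(\uI)$ have $|\sigma| = k/2$ and thus contribute in the limit, each with coefficient exactly $1$. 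Your derivation of part (1) from part (2) --- positivity of each genuine joint distribution $\mu_n$ passes to the pointwise moment-limit, and any polynomial in $\bC \langle \, \{ X_p \mid p \in \bN \} \, \rangle$ involves only finitely many indeterminates --- is also sound.
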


\begin{remark}   \label{rem:26}
Let $( \cA , \varphi )$ be a $*$-probability space and let
$\ans$ be a sequence of selfadjoint elements of $\cA$ which 
satisfies assumptions (i) and (ii) from Remark \ref{rem:24}, 
but where we do not make the assumption (iii) that the $a_n$'s 
are centered.  Denoting 
$\alpha_1 := \varphi (a_1) = \varphi (a_2) = \cdots \, ,$
we can still apply Theorem \ref{thm:25} to the sequence 
$( \abubble_n )_{n=1}^{\infty}$ where 
$\abubble_n := a_n - \alpha_1, \ n \in \bN$.
Indeed, the sequence $( \abubble_n )_{n=1}^{\infty}$ still 
satisfies (i) and (ii), by Proposition \ref{prop:23}, and in 
addition to that the $\abubble_n$'s are centered as well.  Note 
that in this case, if we want to write the elements $s_{p,n}$ 
involved in Theorem \ref{thm:25} directly in terms of the $a_n$'s, 
then they are
\[
s_{p,n} = \frac{1}{\sqrt{n}} 
\Bigl( \ua (p,1) + \cdots + \ua (p,n)  
- n \alpha_1 \Bigr),
\mbox{ for $p \in \{ 1, \ldots , r \}$ and $n \in \bN$.}
\]
\end{remark}

$\ $

\section{Translation of an exchangeable sequence, and proof of 
Proposition \ref{prop:113} }

We begin with a lemma which is, essentially, the special 
case ``$r=1$'' of the proposition we are aiming to prove.

\begin{lemma}   \label{lemma:31}
Let $( \cA , \varphi )$ be a $*$-probability space.
Let $\ans$ be an exchangeable sequence of selfadjoint 
elements of $\cA$, with associated function 
$\bt : \sqcup_{k=1}^{\infty} \cP (k) \to \bC$, and 
suppose that $\bt$ fulfills the vanishing condition 
indicated in (\ref{eqn:18a}) of Remark \ref{rem:18}.  
Pick a $\lambda \in \bR$ and put
\[
b_n := a_n + \lambda, \ \ n \in \bN .
\]
This creates a new exchangeable sequence $\bns$, which 
also has an associated function on partitions,
$\bu : \sqcup_{k=1}^{\infty} \cP (k) \to \bC$. 

Consider the power series 
$\widetilde{f}_1, \widetilde{g}_1 \in \bC [[z]]$
defined by
\begin{equation}   \label{eqn:31b}
\widetilde{f}_1 (z) = \sum_{m=0}^{\infty} 
\frac{ \alpha_{2m} }{ (2m)! } z^m \mbox{ and }
\widetilde{g}_1 (z) = \sum_{m=0}^{\infty} 
\frac{ \beta_{2m} }{ (2m)! } z^m,
\end{equation}
where for every $m \in \bN$ we put 
\[
\alpha_{2m} := \sum_{\pi \in \cP_2 (2m)} \bt (\pi)
\ \mbox{ and } \ \beta_{2m} :=
\sum_{\pi \in \cP_2 (2m)} \bu (\pi),
\]
and we also make the convention to put 
$\alpha_0 = \beta_0 = 1$.
Then one has
\begin{equation}   \label{eqn:31c}
\widetilde{f}_1 (z) \cdot e^{\lambda^2 z/2} 
= \widetilde{g}_1 (z).
\end{equation} 
\end{lemma}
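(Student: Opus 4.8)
The plan is to write the new function $\bu$ explicitly in terms of the old function $\bt$, and then to evaluate the resulting combinatorial sum defining $\beta_{2m}$. First I would fix $k \in \bN$ and a tuple $\uI$ with $\Ker(\uI) = \pi$, and expand
$$
b_{\uI(1)} \cdots b_{\uI(k)}
= \prod_{j=1}^{k} \bigl( a_{\uI(j)} + \lambda \bigr)
= \sum_{S \subseteq \{1,\ldots,k\}} \lambda^{\,k-|S|}
   \prod_{j \in S} a_{\uI(j)},
$$
where the inner product keeps its factors in increasing order of $j$. Applying $\varphi$ and using that the subtuple $(\uI(j))_{j \in S}$ has kernel equal to the restriction $\pi|_S$ of $\pi$ to $S$, I obtain
$$
\bu(\pi) = \sum_{S \subseteq \{1,\ldots,k\}}
           \lambda^{\,k-|S|}\, \bt(\pi|_S) .
$$

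Next I specialize to $\pi \in \cP_2(2m)$. The vanishing hypothesis (\ref{eqn:18a}) kills every term for which $\pi|_S$ has a block of size one; since $\pi$ is a pair-partition, $\pi|_S$ has only blocks of size one or two, and it has no singleton exactly when $S$ is a union of whole blocks of $\pi$. Only such $S$ survive, and for each of them $\pi|_S$ is again a pair-partition. Writing $2\ell = |S|$, I would then sum over $\pi$ and over these admissible $S$ to compute $\beta_{2m} = \sum_{\pi \in \cP_2(2m)} \bu(\pi)$.

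The key combinatorial step is to recount this double sum by the value $\sigma := \pi|_S \in \cP_2(2\ell)$. For a fixed $\sigma$, a pair $(\pi, S)$ producing it is obtained by choosing the underlying set $S$ (there are $\binom{2m}{2\ell}$ ways), placing on $S$ the unique pair-partition relabelling to $\sigma$, and choosing freely a pair-partition of the complement (there are $|\cP_2(2m-2\ell)|$ of these, and none of them affects the weight $\bt(\sigma)$). This yields
$$
\beta_{2m} = \sum_{\ell=0}^{m} \lambda^{\,2(m-\ell)}
   \binom{2m}{2\ell}\, |\cP_2(2m-2\ell)|\, \alpha_{2\ell},
\qquad
|\cP_2(2j)| = \frac{(2j)!}{2^{j}\, j!}.
$$

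Finally I would substitute this into $\widetilde{g}_1(z) = \sum_{m} \beta_{2m}\, z^m/(2m)!$. The identity $\binom{2m}{2\ell}(2\ell)!/(2m)! = 1/(2m-2\ell)!$ collapses the factorials, and after setting $j = m-\ell$ the double sum factors as
$$
\widetilde{g}_1(z)
= \Bigl( \sum_{\ell \geq 0} \frac{\alpha_{2\ell}}{(2\ell)!}\, z^{\ell} \Bigr)
  \Bigl( \sum_{j \geq 0} \frac{(\lambda^2 z/2)^{j}}{j!} \Bigr)
= \widetilde{f}_1(z)\, e^{\lambda^2 z/2},
$$
which is (\ref{eqn:31c}). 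I expect the main obstacle to be the bookkeeping in the recounting step: one must check that restricting a pair-partition to a union of $\ell$ of its blocks, and completing it by an arbitrary pair-partition on the remaining $2m-2\ell$ points, recovers each admissible pair $(\pi, S)$ exactly once, and that the weight $\bt(\sigma)$ genuinely does not depend on the completion. Once this is verified, the remaining manipulation of the exponential generating function is routine.
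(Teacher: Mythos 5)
Your proposal is correct and follows essentially the same route as the paper's own proof: expand the product of the $b$'s, use the vanishing condition to reduce to $\pi$-saturated subsets, recount the double sum via the parametrization of pairs $(\pi,S)$ by a pair-partition of $S$ together with a free pair-partition of the complement, and finish with the factorial identity that factors the generating function. (The paper organizes the recounting by summing over the subset $A=S$ first rather than over $\sigma=\pi|_S$, but this is the same counting argument; your displayed formula for $\beta_{2m}$ also correctly retains the factor $\lambda^{2(m-\ell)}$, which is inadvertently dropped in the paper's intermediate equation (\ref{eqn:31h}).)
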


\begin{proof}  The series appearing on both sides of (\ref{eqn:31c}) 
have constant terms equal to $1$, while for an $m \in \bN$ the 
equality of their coefficients of order $m$ amounts to
\begin{equation}   \label{eqn:31d}
\frac{\beta_{2m}}{ (2m)! } = 
\sum_{\ell = 0}^m 
\frac{\alpha_{2 \ell}}{ (2 \ell)! } \cdot 
\frac{ (\lambda^2 /2)^{m - \ell} }{ (m- \ell )!} 
\end{equation}
(where on the right-hand side we considered the Cauchy 
product of $\widetilde{f}_1 (z)$ with the series expansion for 
$e^{\lambda^2 z/2}$).  We fix for the whole proof an $m \in \bN$ 
for which we will verify that (\ref{eqn:31d}) holds.

Let $\pi$ be in $\cP_2 (2m)$ and let us 
consider a tuple
$\uI = ( \uI (1), \ldots , \uI (2m) ) \in \bN^{2m}$
such that $\Ker ( \uI ) = \pi$.  Then 
\begin{align*}
\bu ( \pi ) 
& = \varphi \bigl( b_{\uI (1)} \cdots b_{\uI (2m)} \bigr)   
  = \varphi \bigl( \ (a_{\uI (1)} + \lambda) \cdots 
                     (a_{\uI (2m)} + \lambda) \, \bigr)     \\
& = \sum_{A \subseteq \{ 1, \ldots , 2m \}} 
    \ \lambda^{2m - |A|} \cdot \varphi  
    \Bigl( \, \prod_{p \in A} \, a_{\uI (p)} \, \Bigr)     
  = \lambda^{2m} + 
    \sum_{ \emptyset \neq A \subseteq \{ 1, \ldots , 2m \}} 
    \ \lambda^{2m - |A|} \cdot \bt ( \pi | A),
\end{align*}
where for a non-empty set $A \subseteq \{ 1, \ldots , 2m \}$ we have 
denoted as ``$\pi | A$'' the partition in $\cP ( |A| )$ which is 
obtained by restricting 
\footnote{The restriction of $\pi$ to $A$ is the partition of 
$A$ into the blocks
$\{ A \cap V \mid V \in \pi \mbox{ and } A \cap V \neq \emptyset \}$.}
$\pi$ to $A$ 
and then by redenoting the elements 
of $A$ as $\{ 1, \ldots , |A| \}$, in increasing order.  We note that 
for every $\emptyset \neq A \subseteq \{ 1, \ldots , 2m \}$, all the 
blocks of $\pi | A$ have cardinality 1 or 2; and moreover, if at 
least one of those blocks has cardinality 1 then the hypothesis 
(\ref{eqn:18a}) assumed on $\bt$ entails that $\bt ( \pi | A ) = 0$.
Observe, moreover, that the requirement on $\pi | A$ to only 
have blocks of cardinality 2 is equivalent to the requirement 
that $A$ is a union of blocks of $\pi$; we will refer to this 
situation by saying that ``$A$ is a $\pi$-saturated set''.

The conclusion of the preceding paragraph is that for any 
$\pi \in \cP_2 (2m)$ we have 
\begin{equation}   \label{eqn:31e}
\bu ( \pi ) = \lambda^{2m} + 
\sum_{ \begin{array}{c}
{\scriptstyle \emptyset \neq A \subseteq \{ 1, \ldots , 2m \},}  \\
{\scriptstyle \pi-saturated}
\end{array} } 
\ \lambda^{2m - |A|} \cdot \bt ( \pi | A).
\end{equation}
In this formula we sum over $\pi \in \cP_2 ( 2m )$ and then we reverse
the order of summation in the ensuing double sum, to obtain:
\begin{equation}   \label{eqn:31f}
\beta_{2m} = \lambda^{2m} \cdot | \cP_2 (2m) | + 
\sum_{\emptyset \neq A \subseteq \{ 1, \ldots , 2m \}}
\lambda^{2m - |A|} \cdot 
\Bigl( \, \sum_{ \begin{array}{c}
{\scriptstyle \pi \in \cP_2 (2m),} \\
{\scriptstyle A \ is \ \pi-saturated} 
\end{array} } 
\ \bt ( \pi | A) \, \Bigr) .
\end{equation}

Now let us fix for a moment a non-empty set 
$A \subseteq \{ 1, \ldots , 2m \}$.  If $|A|$ is odd then, clearly, 
there are no partitions $\pi \in \cP_2 (2m)$ such that $A$ is 
$\pi$-saturated.  If $|A| = 2 \ell$ for some 
$\ell \in \{ 1, \ldots , m \}$, then we claim that:
\begin{equation}   \label{eqn:31g}
\sum_{ \begin{array}{c}
{\scriptstyle \pi \in \cP_2 (2m),} \\
{\scriptstyle A \ is \ \pi-saturated} 
\end{array} } 
\ \bt ( \pi | A) =
| \cP_2 ( 2m - 2 \ell ) | \cdot 
\sum_{\rho \in \cP_2 ( 2 \ell )} \bt ( \rho )
= | \cP_2 ( 2m - 2 \ell ) | \cdot \alpha_{2 \ell}.
\end{equation}
The formula (\ref{eqn:31g}) is easily verified upon noticing 
that the pair-partitions $\pi \in \cP_2 (2m)$ such that $A$ is 
$\pi$-saturated are parametrized by couples 
$( \rho , \rho ' ) \in \cP_2 ( 2 \ell ) \times 
\cP_2 ( 2 m - 2 \ell )$, where $\rho$ is turned into a 
pair-partition of $A$ while $\rho '$ is turned into
a pair-partition of $\{ 1, \ldots , 2m \} \setminus A$,
in the natural way.  

By plugging (\ref{eqn:31g}) into (\ref{eqn:31f}), 
we find that:
\begin{equation}   \label{eqn:31h}
\beta_{2m} = \lambda^{2m} \cdot | \cP_2 (2m) | + 
\sum_{\ell = 1}^m 
\left(  \begin{array}{c} 2m \\ 2 \ell \end{array} \right)
\cdot | \cP_2 ( 2m - 2 \ell ) | \cdot \alpha_{2 \ell}.
\end{equation}
In the latter equation we substitute 
$| \cP_2 (2m) | = (2m)!/( 2^m \, m!)$ and 
$| \cP_2 (2m - 2 \ell ) |$ = 

\noindent
$(2m - 2 \ell )!/( 2^{m- \ell} \, (m- \ell )!)$, and 
then 
some straightforward algebra leads to the required 
formula (\ref{eqn:31d}).
\end{proof}

$\ $

The next lemma provides a reduction from multi-variate case 
to univariate case in the commuting e.g.f's that are of 
interest in Proposition \ref{prop:113}.

\begin{lemma}   \label{lemma:32}
Suppose we are given a function
$\bt : \sqcup_{k=1}^{\infty} \cP (k) \to \bC$,
and we use it to construct a linear functional 
$\mu : \bC \langle \, \{ X_p \mid p \in \bN \} \, \rangle 
\to \bC$ via the recipe described in Definition \ref{def:17}(2).
Consider moreover an $r \in \bN$, and let 
$f_r (z_1, \ldots , z_r)$ be the commuting e.g.f (as in 
Definition \ref{def:14}(2)) for the moments of the restriction 
of $\mu$ to $\bC \langle X_1, \ldots , X_r \rangle \subseteq 
\bC \langle \, \{ X_p \mid p \in \bN \} \, \rangle$.

On the other hand, consider the series of 
one variable
$\widetilde{f}_1 (z) := \sum_{m=0}^{\infty} 
\frac{\alpha_{2m}}{ (2m)! } z^m$, where 
\[
\alpha_{2m} := \sum_{\pi \in \cP_2 (2m)} \bt ( \pi )
\mbox{ for $m \in \bN$, and we also put } \alpha_0 := 1.
\]

Then the series $f_r$ and $\widetilde{f}_1$ are related by 
the formula
\begin{equation}   \label{eqn:32b}
f_r ( z_1, \ldots , z_r ) =
\widetilde{f}_1 ( z_1^2 + \cdots + z_r^2 ).
\end{equation} 
\end{lemma}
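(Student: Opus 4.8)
The plan is to substitute the defining recipe for $\mu$ into the commuting e.g.f.\ $f_r$ from (\ref{eqn:15a}) and then interchange the order of summation so that pair-partitions, rather than index tuples, become the outer summation variable. First I would note that, since $\cP_2 (k) = \emptyset$ for odd $k$, the prescription of Definition \ref{def:17}(2) forces $\mu ( X_{\uI (1)} \cdots X_{\uI (k)} ) = 0$ whenever $k$ is odd; hence only the terms with $k = 2m$ survive, and (\ref{eqn:15a}) becomes
\[
f_r (z_1, \ldots, z_r) = \sum_{m=0}^{\infty} \frac{1}{(2m)!}
\sum_{\uI \in \{ 1, \ldots , r \}^{2m}}
\mu \bigl( X_{\uI (1)} \cdots X_{\uI (2m)} \bigr) \,
z_{\uI (1)} \cdots z_{\uI (2m)},
\]
where the $m=0$ contribution is the constant $1$. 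All the manipulations below take place coefficient-by-coefficient in the ring of formal power series, so for each fixed $m$ I am only rearranging a finite double sum.

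Next I would expand $\mu ( X_{\uI (1)} \cdots X_{\uI (2m)} ) = \sum_{\pi \in \cP_2 (2m), \ \pi \leq \Ker ( \uI )} \bt ( \pi )$ and swap the two finite sums, so that $\pi$ is summed on the outside:
\[
\sum_{\uI \in \{ 1, \ldots , r \}^{2m}}
\mu \bigl( X_{\uI (1)} \cdots X_{\uI (2m)} \bigr) \,
z_{\uI (1)} \cdots z_{\uI (2m)}
= \sum_{\pi \in \cP_2 (2m)} \bt ( \pi )
\sum_{ \begin{array}{c}
{\scriptstyle \uI \in \{ 1, \ldots , r \}^{2m} }  \\
{\scriptstyle \pi \leq \Ker ( \uI ) }
\end{array} }
z_{\uI (1)} \cdots z_{\uI (2m)} .
\]
The key observation is that the condition $\pi \leq \Ker ( \uI )$ says precisely that the tuple $\uI$ is constant on each of the $m$ blocks of $\pi$, with no constraint coupling different blocks to each other. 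The inner monomial sum therefore factors over the blocks of $\pi$: a block $\{ p , q \}$ contributes the factor $\sum_{j=1}^{r} z_j^2$ (arising from the choice $\uI (p) = \uI (q) = j$), whence
\[
\sum_{ \begin{array}{c}
{\scriptstyle \uI \in \{ 1, \ldots , r \}^{2m} }  \\
{\scriptstyle \pi \leq \Ker ( \uI ) }
\end{array} }
z_{\uI (1)} \cdots z_{\uI (2m)}
= \bigl( z_1^2 + \cdots + z_r^2 \bigr)^m ,
\]
a quantity that depends only on the number $m$ of blocks and not on the particular $\pi \in \cP_2 (2m)$.

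Finally I would collect the terms: since the inner sum is independent of $\pi$, the remaining summation over $\cP_2 (2m)$ simply produces the scalar $\sum_{\pi \in \cP_2 (2m)} \bt ( \pi ) = \alpha_{2m}$, so that the displayed double sum equals $\alpha_{2m} ( z_1^2 + \cdots + z_r^2 )^m$. Substituting this back into the expression for $f_r$ gives $f_r (z_1, \ldots, z_r) = \sum_{m=0}^{\infty} \frac{\alpha_{2m}}{(2m)!} ( z_1^2 + \cdots + z_r^2 )^m = \widetilde{f}_1 ( z_1^2 + \cdots + z_r^2 )$, which is the desired formula (\ref{eqn:32b}). I do not expect a genuine obstacle in this argument, as it is entirely a reorganization of a finite sum for each $m$; the only point demanding care is the factorization of the inner monomial sum over the blocks of $\pi$, and in particular the observation that distinct blocks may legitimately be assigned equal labels, so that each per-block factor is the full $z_1^2 + \cdots + z_r^2$ rather than a restricted sum enforcing distinctness.
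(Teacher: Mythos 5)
Your proposal is correct, and it is essentially the paper's own argument: both proofs expand $\mu$ via the recipe of Definition \ref{def:17}(2), interchange the summation over tuples $\uI$ with the summation over pair-partitions $\pi$, and use the key observation that the tuples with $\Ker(\uI) \geq \pi$ are exactly the colourings of the $m$ blocks of $\pi$ by $\{1,\ldots,r\}$. The only (cosmetic) difference is in packaging: the paper extracts the coefficient of each monomial $z_1^{\ell_1}\cdots z_r^{\ell_r}$ and counts the colourings with prescribed colour multiplicities (obtaining the multinomial coefficient $m!/(j_1!\cdots j_r!)$), whereas you keep the monomials inside the sum and factor it over the blocks into $(z_1^2+\cdots+z_r^2)^m$ directly, which is the same computation read through the multinomial theorem.
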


\begin{proof}
We consider some integers $\ell_1 , \ldots , \ell_r \geq 0$
for which we will examine the coefficient of 
$z_1^{\ell_1} \cdots z_r^{\ell_r}$ in the series $f_r$.  Without 
loss of generality, we assume that not all of 
$\ell_1, \ldots , \ell_r$ are equal to $0$ (since the series 
on both sides of (\ref{eqn:32b}) have constant terms equal to 
$1$).  We denote $\ell_1 + \cdots + \ell_r =: k \in \bN$.

By direct inspection of the formula which defines the moments a 
commuting e.g.f, and then by invoking 
the specific formula for the moments of $\mu$, we see that: 
\[
\left(  \begin{array}{c}
\mbox{coefficient of}  \\
\mbox{$z_1^{\ell_1} \cdots z_r^{\ell_r}$ in $f_r$}
\end{array}  \right)
\ = \ \frac{1}{k!} \cdot
\sum_{ \begin{array}{c}
{\scriptstyle \uI = 
       ( \uI (1), \ldots \uI (k) ) \in \{ 1, \ldots , r \}^k } \\
{\scriptstyle with \ | \uI^{-1}(1)| = \ell_1, \ldots ,
                     | \uI^{-1}(r)| = \ell_r }
\end{array} } \ \ \mu \bigl( X_{\uI (1)} \cdots X_{\uI (k)} \bigr)
\]
\[
= \frac{1}{k!} \sum_{ \begin{array}{c}
{\scriptstyle \uI = 
       ( \uI (1), \ldots \uI (k) ) \in \{ 1, \ldots , r \}^k } \\
{\scriptstyle with \ | \uI^{-1}(1)| = \ell_1, \ldots ,
                     | \uI^{-1}(r)| = \ell_r }
\end{array} } \ \ 
\sum_{\pi \in \cP_2 (k), \,  \pi \leq \, Ker ( \uI )}
\ \bt (\pi).
\]
In the latter expression we reverse the order of the two
sums, and continue with
\begin{equation}   \label{eqn:32c} 
= \frac{1}{k!} \sum_{\pi \in \cP_2 (k)} \bt (\pi) \cdot
\, \vline \ \Bigl\{ \uI \in \{1, \ldots , r \}^k 
\begin{array}{ll}
\vline & | \uI^{-1}(1)| = \ell_1, \ldots , | \uI^{-1}(r)| = \ell_r,  \\
\vline & \mbox{and } \Ker ( \uI ) \geq \pi 
\end{array}  \Bigr\} \ \vline .
\end{equation}

If not true that all of $\ell_1, \ldots , \ell_r$ are even, 
then it is immediate that
\[
\Bigl\{ \uI \in \{1, \ldots , r \}^k 
\begin{array}{ll}
\vline & | \uI^{-1}(1)| = \ell_1, \ldots , | \uI^{-1}(r)| = \ell_r,  \\
\vline & \mbox{and } \Ker ( \uI ) \geq \pi 
\end{array}  \Bigr\}  = \emptyset, 
\mbox{ for all $\pi \in \cP_2 (k)$,}  
\]
and Equation (\ref{eqn:32c}) entails that the coefficient 
of $z_1^{\ell_1} \cdots z_r^{\ell_r}$ in $f_r$ is equal to $0$. 

Let us now assume that $\ell_1, \ldots , \ell_r$ are even, and let us 
write $\ell_1 = 2j_1, \ldots , \ell_r = 2j_r$.  We also denote 
$j_1 + \cdots + j_r =: m$ (hence $m = k/2 \in \bN$).  Observe that 
for every $\pi \in \cP_2 (k)$, we have 
\begin{equation}   \label{eqn:32d}
\vline \ \Bigl\{ \uI \in \{1, \ldots , r \}^k 
\begin{array}{ll}
\vline & | \uI^{-1}(1)| = \ell_1, \ldots , | \uI^{-1}(r)| = \ell_r,  \\
\vline & \mbox{and } \Ker ( \uI ) \geq \pi 
\end{array}  \Bigr\}  \ \vline \ = 
\frac{m!}{j_1! \cdots j_r!}.
\end{equation}
Indeed, the tuples $\uI$ which are being counted on the left-hand side
of (\ref{eqn:32d}) are constant along every block of $\pi$, and
can be thought of as ``colourings'' of the blocks of $\pi$, 
where $j_1$ blocks have colour $1, \ldots, j_r$ blocks have colour
$r$; the number of such colourings is given by the multinomial 
coefficient indicated on the right-hand side of (\ref{eqn:32d}).
Upon plugging (\ref{eqn:32d}) into (\ref{eqn:32c}), we conclude that 
the coefficient of $z_1^{2j_1} \cdots z_r^{2j_r}$ in $f_r$ is equal to 
\begin{equation}   \label{eqn:32e}
\frac{1}{(2m)!} \cdot \frac{ m! }{j_1! j_2! \cdots j_r!}
\cdot \sum_{\pi \in \cP_2 (2m)} \bt ( \pi ),
\ \mbox{ hence to }
\frac{\alpha_{2m}}{ (2m)! }
\cdot \frac{ m! }{j_1! j_2! \cdots j_r!} .
\end{equation} 

It is immediately verified that the coefficients of the series 
\[
\widetilde{f}_1 (z_1^2 + \cdots + z_r^2) := 
1 + \sum_{m=1}^{\infty} 
\frac{\alpha_{2m}}{ (2m)! } (z_1^2 + \cdots + z_r^2)^m
\]
have exactly the same values as those found above for the coefficients 
of $f_r$ (namely the coefficients of monomials 
$z_1^{2j_1} \cdots z_r^{2j_r}$ are as shown in (\ref{eqn:32e}), 
while all the other coefficients are equal to $0$).  This 
concludes the verification of the required equality
(\ref{eqn:32b}).
\end{proof}

$\ $

\begin{ad-hoc-item}
{\bf Proof of Proposition \ref{prop:113}.}
The series $f(z_1, \ldots , z_r)$ in Proposition \ref{prop:113}
is the same as the $f_r (z_1, \ldots, z_r)$ from the 
above Lemma \ref{lemma:32}, and can therefore be expressed
as $\widetilde{f}_1 (z_1^2 + \cdots + z_r^2)$, with 
$\widetilde{f}_1 \in \bC [[z]]$ as described in 
Lemma \ref{lemma:32}.  Likewise, the series 
$g(z_1, \ldots , z_r)$ in Proposition \ref{prop:113} can 
be written as ``$g_r (z_1, \ldots , z_r)$'' in the framework of 
Lemma \ref{lemma:32}, where we now start from the 
function on partitions $\bu$ (instead of $\bt$); consequently,
we have the formula
\[
g(z_1, \ldots , z_r) = 
\widetilde{g}_1 (z_1^2 + \cdots + z_r^2),
\]
for the corresponding series $\widetilde{g}_1 \in \bC [[z]]$
defined in the way described in Lemma \ref{lemma:32}.  We are
only left to invoke Lemma \ref{lemma:31} which connects the series
$\widetilde{f}_1$ and $\widetilde{g}_1$ -- the formula 
(\ref{eqn:31c}) from Lemma \ref{lemma:31} converts precisely into
the formula (\ref{eqn:113b}) from Proposition \ref{prop:113}.
\hfill $\square$
\end{ad-hoc-item}

$\ $

\section{The exchangeable sequence of star-generators
of $S_{\infty}$, and proof of Proposition \ref{prop:110}}

\begin{notation-and-remark}  \label{def:41}

(1) Let $S_{\infty}$ be the infinite symmetric group 
(as in (\ref{eqn:1a}) of the Introduction).  We will 
write the permutations in $S_{\infty}$ by using cycle 
notation, where we only indicate the cycles of length 
$\geq 2$ of the permutation -- it is implicitly assumed 
that all the numbers in $\bN$ that are not indicated 
in the cycle notation are fixed points 
of the permutation in question.  This convention was in
particular used in Equation (\ref{eqn:1c}) of the 
Introduction, where we considered the star-generators
\[
\gamma_n := (1, n+1), \ \ n \in \bN .
\]

\vspace{6pt}

(2) As mentioned in the Introduction, we will use the 
notation $|| \tau ||$ for the minimal number of factors
required in a factorization of $\tau$ into transpositions
and where, by convention, we have $|| \tau || = 0$ if and 
only if $\tau$ is the identity permutation of $\bN$.
Note that, as a consequence of the fact that the set 
of transpositions in $S_{\infty}$ is invariant under 
conjugation, the map $\tau \mapsto || \tau ||$ is constant 
on conjugacy classes of $S_{\infty}$.

\vspace{6pt}

(3) We will use the notation ``$\#$'' for the number of 
cycles (including fixed points) of a permutation 
$\tau \in S_{\infty}$ on a given invariant finite set.  
More precisely: if $\tau \in S_{\infty}$ and if 
$A \subseteq \bN$ is a finite set such that 
$\tau (A) = A$, then we denote
\[
\# ( \tau \mid A )
:= \bigl(  \mbox{
number of orbits into which $A$ is partitioned by 
the action of $\tau$} \, \bigr) .
\]
This notation is useful for giving an
alternative description of the number $|| \tau ||$ reviewed 
in (2) above; indeed, it is easy to verify that one has the 
formula 
\begin{equation}   \label{eqn:41c}
|| \tau || = |A| - \# ( \tau \mid A ),
\end{equation}
holding for $\tau \in S_{\infty}$ and with $A$ being any finite 
subset of $\bN$ such that $\tau (b) = b$ for all 
$b \in \bN \setminus A$.  

\vspace{6pt}

(4) On the group algebra $\bC [ S_{\infty} ]$ we consider the 
$*$-operation determined by the requirement that 
\[
\tau^{*} := \tau^{-1}, \ \ \forall \, \tau \in S_{\infty}.
\]
That is, every permutation $\tau$ in $S_{\infty}$ becomes a
unitary element of $\bC [ S_{\infty} ]$.  Note that if $\tau$ 
is a product of disjoint transpositions (hence $\tau = \tau^{-1}$),
then $\tau$ is at the same time a selfadjoint element of 
$\bC [ S_{\infty} ]$.  In particular, the star-generators 
$( \gamma_n )_{n=1}^{\infty}$ form a sequence of selfadjoint 
elements of $\bC [S_{\infty}]$.

\vspace{6pt}

(5) Let $d$ be in $\bN$ and let 
$\varphi_d : \bC [ S_{\infty} ] \to \bC$ be the linear 
functional defined in the way indicated in Equation (\ref{eqn:1b})
of the Introduction.  It is obvious that $\varphi_d (1) = 1$, 
and it turns out that, moreover, $\varphi_d$ is positive; hence
$( \bC [ S_{\infty} ], \varphi_d )$ is a $*$-probability space.
The positivity property of $\varphi_d$ can be directly verified 
by examining the action of permutations on words of finite length 
over the alphabet $\{ 1, \ldots , d \}$; for a detailed presentation 
of how this goes, we refer the reader to \cite{GnGoKe2013}.  

We also note that $\varphi_d$ has the trace property: 
\begin{equation}   \label{eqn:41d}
\varphi_d (ab) = \varphi_d (ba), \ \ \forall \, 
a, b \in \bC [S_{\infty}]. 
\end{equation}
Indeed, this boils down to checking that 
$|| \sigma \tau || = || \tau \sigma ||$ for all 
$\sigma, \tau \in S_{\infty}$, and the latter equality follows from
the fact that $\sigma \tau$ and $\tau \sigma$ belong to the same 
conjugacy class of $S_{\infty}$.  In connection to the trace 
property, we mention that the restriction of $\varphi_d$ to 
the group $S_{\infty}$ is what is called an ``extremal character'' 
of this group.  In the well-known parametrization of Thoma 
for such characters (see e.g.  Section 3 of the survey paper 
\cite{Ol2003}, or Section 4.1 of the monograph \cite{BoOl2016}), 
$\varphi_d$ is the character of $S_{\infty}$ parametrized by the Thoma 
double sequence $( \alpha_n ; \beta_n )_{n=1}^{\infty}$ having
\[
\alpha_1 = \cdots = \alpha_d = 1/d, 
\ \alpha_n = 0 \mbox{ for $n>d$, and $\beta_n = 0$ for all $n \in \bN$}.
\]
This identification as an extreme character can also be used as an 
argument for the fact that $\varphi_d$ is a positive functional 
on $\bC [ S_{\infty} ]$.
\end{notation-and-remark}

\begin{remark}   \label{rem:43}
In connection to the framework from Notation \ref{def:41}(5),
one may wonder what is so special about using a base of the form 
$1/d$, with $d \in \bN$, in the formula (\ref{eqn:1b}) which defines
the linear functional considered on $\bC [ S_{\infty} ]$.  Why 
doesn't one consider a linear functional 
$\varphi : \bC [S_{\infty}] \to \bC$ defined by the requirement that
\begin{equation}   \label{eqn:43a}
\varphi ( \tau ) = q^{ || \tau || }, 
\ \ \forall \, \tau \in S_{\infty},
\end{equation}
where $q$ is some arbitrary (but fixed) real number?  The reason 
is that the functional defined by Equation (\ref{eqn:43a}) is 
positive on $\bC [ S_{\infty} ]$ if and only if $q$ belongs to the 
special subset
\begin{equation}   \label{eqn:43c}
\{ \frac{1}{d} \mid d \in \bN \} \cup \{ 0 \} \cup
\{ - \frac{1}{d} \mid d \in \bN \} \subseteq \bR ;
\end{equation}
so the choice $q = 1/d$ is, in fact, not too restrictive.  
Concerning the 
possible alternative of using $q = -1/d$, let us observe that the 
functionals corresponding to $q = 1/d$ and $q = - 1/d$ in (\ref{eqn:43a}) 
only differ by a multiplication with the character 
$\tau \mapsto \mbox{sign} (\tau)$ on $S_{\infty}$; as a consequence of 
this fact, the whole layout of the present paper wouldn't change much if 
we would choose to work with $q = -1/d$ instead of $q = 1/d$.  We also 
note that the choice $q = 0$ in the set of possible values of $q$ 
indicated in (\ref{eqn:43c}) would make the functional $\varphi$ defined 
by (\ref{eqn:43a}) become the so-called ``canonical trace'' associated 
to the regular representation of $S_{\infty}$; in this case, as mentioned 
in the Introduction, the counterpart of our main Theorem \ref{thm:16} is 
a result obtained by Biane in \cite{Bi1995}.

In order to prove that the positivity requirement restricts $q$ to the 
set of values indicated in (\ref{eqn:43c}), one can use a direct argument, 
described as follows.  The set of values 
\begin{equation}   \label{eqn:43d}
\Bigl\{ q  \in \bR 
\begin{array}{lc}
\vline & \mbox{ the functional $\varphi$ defined in (\ref{eqn:43a})}  \\
\vline & \mbox{ is positive on $\bC [ S_{\infty} ]$ }
\end{array}  \Bigr\}
\end{equation}
is symmetric, because the functionals ``$\varphi$'' corresponding to $q$ 
and to $-q$ are obtained from each other by multiplication with the sign 
character of $S_{\infty}$.  It thus suffices to look at a $q$ from the set 
(\ref{eqn:43d}) such that $q < 0$.  For every $n \in \bN$, let 
$S_n := \{ \tau \in S_{\infty} \mid \tau (k) = k \mbox{ for all } k > n \}$, 
and consider the square matrix (of size $n!$) 
$G_n := [ \varphi ( \sigma^{-1} \tau ) ]_{\sigma, \tau \in S_n}$.
It is easily seen that, since $\varphi$ is positive, the matrices 
$G_n$ have to be non-negative definite for all $n \in \bN$.  But it is 
also easily seen that, for every $n \in \bN$, $G_n$ has the eigenvalue
\[
\lambda_n = \sum_{ \tau \in S_n }
\ q^{ || \tau || }
\]
(corresponding to the eigenvector that has all the components equal 
to $1$), and that this eigenvalue can be factored as
$\lambda_n = (1+q)(1+2q) \cdots (1+ (n-1)q)$.
Finally, for our $q < 0$, the condition that $\lambda_n \geq 0$ for 
all $n \in \bN$ forces $q$ to be of the form $q = - 1/d$ with 
$d \in \bN$. 
\end{remark}

$\ $

In the framework from Notation \ref{def:41}, we now consider 
the sequence $( \gamma_n )_{n=1}^{\infty}$ of star-generators of 
$S_{\infty}$, and we prove that it satisfies the hypotheses 
discussed in Theorem \ref{thm:25} and Remark \ref{rem:26}.

\begin{proposition}   \label{prop:44}
Let $d$ be in $\bN$.
Consider the $*$-probability space
$( \, \bC [ S_{\infty} ], \varphi_d )$ and the sequence of selfadjoint 
elements $( \gamma_n )_{n=1}^{\infty}$ in $\bC [ S_{\infty} ]$.
Then:

\vspace{6pt}

(1) $( \gamma_n )_{n=1}^{\infty}$ is exchangeable.

\vspace{6pt}

(2) $( \gamma_n )_{n=1}^{\infty}$ has the singleton-factorization 
property.
\end{proposition}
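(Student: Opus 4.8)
The plan is to verify both properties directly from the description $\varphi_d ( \tau ) = (1/d)^{|| \tau ||}$, exploiting that all star-generators share the common point $1$ and that $|| \cdot ||$ is a class function on $S_{\infty}$ (cf. Notation and Remark \ref{def:41}(2)), so that $\varphi_d$ is constant on conjugacy classes.

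For part (1), the first step is to record the elementary fact that permuting the \emph{indices} of the star-generators is realized by an inner automorphism. Given a permutation $\tau$ of the index set, I would take $\sigma \in S_{\infty}$ determined by $\sigma (1) = 1$ and $\sigma (n+1) = \tau (n) + 1$ (this is a well-defined element of $S_{\infty}$), and observe that
\[
\sigma \gamma_n \sigma^{-1} = \sigma \, (1, n+1) \, \sigma^{-1}
= ( \sigma (1), \sigma (n+1) ) = (1, \tau (n) + 1 ) = \gamma_{\tau (n)} .
\]
Now if $\uI , \uJ \in \bN^k$ satisfy $\Ker ( \uI ) = \Ker ( \uJ )$, then by Definition and Remark \ref{def:21} there is $\tau \in S_{\infty}$ with $\uJ = \tau \circ \uI$, and the displayed identity gives $\gamma_{\uJ (1)} \cdots \gamma_{\uJ (k)} = \sigma \, ( \gamma_{\uI (1)} \cdots \gamma_{\uI (k)} ) \, \sigma^{-1}$. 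Applying $\varphi_d$ and using its invariance under conjugation (equivalently the trace property (\ref{eqn:41d})) yields the equality of joint moments that is exchangeability as phrased in (\ref{eqn:13y}).

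For part (2), I would reduce the singleton-factorization identity (\ref{eqn:22a}) to the single length computation
\[
|| \gamma_{\uI (1)} \cdots \gamma_{\uI (k)} ||
= 1 + || \gamma_{\uI (1)} \cdots \gamma_{\uI (j_o - 1)}
\gamma_{\uI (j_o+1)} \cdots \gamma_{\uI (k)} || ,
\]
since once this is known, $\varphi_d$ of the full word equals $1/d$ times $\varphi_d$ of the deleted word, and $\varphi_d ( \gamma_{\uI (j_o)} ) = 1/d$, which is exactly the asserted factorization. To prove the length relation, set $m := \uI (j_o)$, $L := \gamma_{\uI (1)} \cdots \gamma_{\uI (j_o-1)}$ and $R := \gamma_{\uI (j_o+1)} \cdots \gamma_{\uI (k)}$, so $\tau = L \gamma_m R$ and $\tau' := LR$ is the word with the singleton factor deleted. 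The singleton hypothesis $\uI (j) \neq m$ for $j \neq j_o$ forces every factor of $L$ and of $R$ to be some $\gamma_{m'}$ with $m' \neq m$, hence to fix the point $m+1$; a short computation then gives $\tau \, (\tau')^{-1} = L \gamma_m L^{-1} = ( L(1), m+1 )$, so that $\tau = ( L(1), m+1 ) \cdot \tau'$.

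The crux is then the cycle-counting step, which I expect to be the only point requiring care. Fixing a finite set $A \subseteq \bN$ invariant under both $\tau$ and $\tau'$ (for instance $A = \{ 1 \} \cup \{ \uI (j) + 1 : 1 \leq j \leq k \}$), I would apply (\ref{eqn:41c}) in the form $|| \cdot || = |A| - \# ( \cdot \mid A )$ for both permutations, so that the length relation becomes equivalent to $\# ( \tau' \mid A ) = \# ( \tau \mid A ) + 1$. Since $m+1$ is a fixed point of $\tau'$ and $L(1) \neq m+1$ (as $L$ fixes $m+1$, the equality $L(1) = m+1$ would force $1 = m+1$), the points $L(1)$ and $m+1$ lie in distinct cycles of $\tau'$; left-multiplying $\tau'$ by the transposition $( L(1), m+1 )$ therefore \emph{merges} these two cycles, by the standard rule that multiplication by a transposition splits or merges cycles according to whether its two points share a cycle. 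The cycle count thus drops by exactly one, the length rises by one, and part (2) follows.
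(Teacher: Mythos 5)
Your proof is correct, and it is worth separating the two parts. Part (1) is identical to the paper's argument: your $\sigma$ (with $\sigma(1)=1$, $\sigma(n+1)=\tau(n)+1$) is exactly the paper's conjugating permutation $\theta$, and both proofs conclude by the trace property (\ref{eqn:41d}). In part (2) you make the same initial reduction to a length identity, but you execute the crux step by a genuinely different route. The paper first uses conjugation-invariance of $\| \cdot \|$ to rotate the word cyclically, reducing to $\| \gamma_{\uI(j)} \, (\sigma_2 \sigma_1) \| = 1 + \| \sigma_2 \sigma_1 \|$, and then argues from the explicit disjoint-cycle decomposition of $\sigma_2 \sigma_1$, splitting into two cases according to whether $1$ is moved or fixed by $\sigma_2 \sigma_1$. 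You instead conjugate the singleton factor through $L$, obtaining the exact factorization $\tau = (L(1),\, m+1) \cdot \tau'$ where $\tau'$ is the deleted word itself (no rotation needed), and then apply the split/merge rule for left multiplication by a transposition a single time: since $m+1$ is a fixed point of $\tau'$ and $L(1) \neq m+1$, the two points lie in distinct orbits of $\tau'$, so the orbit count on your invariant set $A$ drops by exactly one, and (\ref{eqn:41c}) converts this into the length identity. Your auxiliary checks are all sound ($L$ and $R$ fix $m+1$ because every factor $\gamma_{m'}$ with $m' \neq m$ does; $L(1) \neq m+1$ follows from $L$ fixing $m+1$; and $A = \{1\} \cup \{\uI(j)+1 : 1 \leq j \leq k\}$ is invariant under $\tau$ and $\tau'$, so (\ref{eqn:41c}) applies to both with the same $A$). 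What your route buys is a case-free argument — merging a fixed point into the orbit of $L(1)$ uniformly covers both of the paper's cases, whether $L(1)$ lies in a nontrivial cycle or is itself fixed — and it avoids invoking the trace property within part (2). What the paper's route buys is self-containedness: it verifies the count directly from the written-out cycle decompositions rather than quoting the (standard, but unproved in the paper) merge rule. Both proofs hinge on the same structural observation, which you identify correctly: the singleton hypothesis forces $\uI(j_o)+1$ to be a fixed point of the word with $\gamma_{\uI(j_o)}$ deleted.
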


\begin{proof}  
(1) Consider, same as in Definition \ref{def:21}: a $k \in \bN$
and two tuples $\uI, \uJ : \{ 1, \ldots , k \} \to \bN$ for which there
exists a permutation $\tau \in S_{\infty}$ such that 
$\uJ = \tau \circ \uI$.
We have to verify the equality
\begin{equation}  \label{eqn:44a}
\varphi_d ( \gamma_{\uI(1)} \cdots \gamma_{\uI(k)} ) =  
\varphi_d ( \gamma_{\uJ(1)} \cdots \gamma_{\uJ(k)} ).
\end{equation}  

Let $\theta : \bN \to \bN$ be defined by putting $\theta (1) = 1$ 
and $\theta (n) = 1 + \tau (n-1)$ for $n \geq 2$.  It is immediate 
that $\theta \in S_{\infty}$.  For every $1 \leq h \leq k$ we have:
\begin{equation}  \label{eqn:44b}
\theta \gamma_{\uI (h)} \theta^{-1}
= \theta \, ( 1, \uI (h) + 1) \, \theta^{-1}
= ( 1, \uJ (h) + 1) = \gamma_{\uJ(h)},
\end{equation}  
where at the second equality sign we used the fact that $\theta (1) = 1$ 
and $\theta (\uI(h) + 1) = 1 + \tau (\uI(h)) = 1 + \uJ(h)$. 
From (\ref{eqn:44b}) we infer that 
\[
\theta \bigl( \gamma_{\uI(1)} \cdots \gamma_{\uI(k)} \bigr)    
\theta^{-1} = \gamma_{\uJ(1)} \cdots \gamma_{\uJ(k)} \in \bC [S_{\infty}], 
\]
and the required equality (\ref{eqn:44a}) follows from the fact that 
$\varphi_d$ has the trace property.

\vspace{6pt}

(2) Consider, same as in Definition \ref{def:22}: a $k \in \bN$, 
a tuple $\uI : \{ 1, \ldots , k \} \to \bN$ and an index
$j \in \{ 1, \ldots , k \}$ such that $\uI(j) \neq \uI( \ell )$ for all 
$\ell \neq j$ in $\{ 1, \ldots , k \}$.  We have to verify the equality
\begin{equation}  \label{eqn:44c}
\varphi_d ( \gamma_{\uI(1)} \cdots \gamma_{\uI(k)} ) =  
\varphi_d ( \gamma_{\uI(j)} ) \cdot 
\varphi_d ( \gamma_{\uI(1)} \cdots \gamma_{\uI(j-1)} 
            \gamma_{\uI(j+1)} \cdots \gamma_{\uI(k)} ).
\end{equation}  

Consider the permutations
\begin{equation}  \label{eqn:44d}
\sigma_1 := \gamma_{\uI(1)} \cdots \gamma_{\uI(j-1)},
\ \mbox{ and } \ \sigma_2 := \gamma_{\uI(j+1)} \cdots \gamma_{\uI(k)},
\end{equation}  
where in the case $j=1$ (respectively $j=k$) we make the 
convention that $\sigma_1$ (respectively $\sigma_2$) is the 
identity permutation.  The required formula (\ref{eqn:44c}) 
then amounts to 
\[
\varphi_d ( \sigma_1 \gamma_{\uI(j)} \sigma_2 ) = 
\varphi_d ( \gamma_{\uI(j)} ) \cdot \varphi_d ( \sigma_1 \sigma_2 );
\]
Remembering how $\varphi_d$ is defined, we thus see that what we 
have to verify is a relation between two lengths:
\[
|| \sigma_1 \gamma_{\uI(j)} \sigma_2 || 
= 1 + || \sigma_1 \sigma_2 ||.
\]
It is convenient to replace this verification with the 
equivalent one that
\begin{equation}  \label{eqn:44e}
|| \gamma_{\uI(j)} \cdot ( \sigma_2 \sigma_1 ) || 
= 1 + || \sigma_2 \sigma_1 ||,
\end{equation}  
where the equalities $|| \sigma_1 \gamma_{\uI(j)} \sigma_2 || =
|| \gamma_{\uI(j)} \sigma_2 \sigma_1 ) ||$ and 
$|| \sigma_1 \sigma_2 || = || \sigma_2 \sigma_1 ||$
follow from the fact that $|| \cdot ||$ is constant on the 
conjugacy classes of $S_{\infty}$.

In the case when $\sigma_2 \sigma_1$ is the identity permutation,
the equality (\ref{eqn:44e}) holds trivially; so we will assume 
that $\sigma_2 \sigma_1$ is not the identity permutation, and we 
will consider the unique factorization 
\begin{equation}  \label{eqn:44f}
\sigma_2 \sigma_1 = \theta_1 \cdots \theta_p
\end{equation}
where $p \geq 1$ and $\theta_1, \ldots , \theta_p$ are disjoint cycles
of lengths $\ell_1, \ldots , \ell_p \geq 2$.  In particular, this 
gives us the explicit formula
\begin{equation}  \label{eqn:44g}
|| \sigma_2 \sigma_1 || = \sum_{r=1}^p ( \ell_r - 1 ) 
\end{equation}
(following for instance from Equation (\ref{eqn:41c}) in 
Notation \ref{def:41}(3)).

Our hypothesis ``$\uI(j) \neq \uI( \ell )$ for all $\ell \neq j$'' 
implies that $\uI(j) + 1$ is a fixed point of $\sigma_2 \sigma_1$, 
since it is fixed by all the transpositions in the products defining 
$\sigma_1$ and $\sigma_2$.  Hence $\uI(j) + 1$ is not included in any 
of the cycles $\theta_1, \ldots , \theta_p$ from (\ref{eqn:44f}).  
For furher discussion we consider two cases, according to whether 
the number $1$ is or is not included in one of those cycles.

\vspace{6pt}

{\em Case 1.}  $1$ is not a fixed point of $\sigma_2 \sigma_1$, hence it 
is included in one of the cycles $\theta_1, \ldots , \theta_p$.

Since the product of cycles $\theta_1, \ldots , \theta_p$ is a commuting 
one, we may assume without loss of generality (by relabeling the cycles, 
if needed) that $1$ appears in the cycle $\theta_1$.  Then 
\begin{equation}  \label{eqn:44h}
\gamma_{\uI(j)} \sigma_2 \sigma_1 = 
( \gamma_{\uI(j)} \theta_1) \theta_2 \cdots \theta_p,
\end{equation}
where $\gamma_{\uI(j)} \theta_1$ is a cycle of length $1 + \ell_1$ 
(cycling the numbers that were in $\theta_1$ and the number $\uI(j) +1$).  
The right-hand side of (\ref{eqn:44h}) is a disjoint cycle decomposition, 
and the counterpart of Equation (\ref{eqn:44g}) is thus 
\[
|| \gamma_{\uI(j)} \cdot ( \sigma_2 \sigma_1 ) || = ((1 + \ell_1) - 1) +
\sum_{r=2}^p ( \ell_r - 1 ).
\]
By comparing to the right-hand side of (\ref{eqn:44g}) we see that we got
indeed $1 + || \sigma_2 \sigma_1||$, as required.

\vspace{6pt}

{\em Case 2.} $1$ is a fixed point of $\tau$, hence is not included in any
of the cycles $\theta_1, \ldots , \theta_p$.

In this case, $\gamma_{\uI(j)} = (1, \uI(j) + 1)$ commutes with the cycles 
$\theta_1, \ldots , \theta_p$, hence the factorization of 
$\gamma_{\uI(j)} \cdot ( \sigma_2 \sigma_1 )$ into a product of disjoint cycles 
is just $\theta_0 \theta_1 \cdots \theta_p$ with $\theta_0 = \gamma_{\uI(j)}$.
The counterpart of Equation (\ref{eqn:44g}) is thus 
\[
|| \gamma_{\uI(j)} \cdot ( \sigma_2 \sigma_1 ) || = 
\sum_{r=0}^p ( \ell_r - 1 ), \mbox{ with $\ell_0 = 2$,}
\]
and the required equality (\ref{eqn:44e}) follows in this case as well.
\end{proof}

$\ $

\begin{ad-hoc-item}   
{\bf Proof of Proposition \ref{prop:110}.}
The sequence of selfadjoint elements $( \gamma_n )_{n=1}^{\infty}$ in 
the $*$-probability space $( \bC [ S_{\infty} ], \varphi_d )$ fits 
in the framework of Remark \ref{rem:26}, and we can therefore
apply Theorem \ref{thm:25} to the centered sequence 
$( \gamma_n - \frac{1}{d} )_{n=1}^{\infty}$.
Upon doing so, we find precisely the statement of Proposition 
\ref{prop:110}.
\hfill $\square$
\end{ad-hoc-item}

$\ $

\begin{remark}  \label{rem:46}
A direct combinatorial approach to the moments of the 
sums $\gamma_1 + \cdots + \gamma_n$ in the $*$-probability 
space $( \bC [ S_{\infty} ], \varphi_d )$ would involve 
the counting of factorizations of a given permutation 
$\tau \in S_{\infty}$ as product of a specified 
number of star-generators.  More precisely, for every 
$p, n \in \bN$ we have (immediately from the definitions)
that
\begin{equation}   \label{eqn:46a}
\varphi_d \bigl( \, ( \gamma_1 + \cdots + \gamma_n )^p
\, \bigr) = \sum_{\tau \in S_{n+1}} 
c_{n,p} ( \tau ) \, (1/d)^{ || \tau || },
\end{equation}
where $S_{n+1} = \{ \tau \in S_{\infty} \mid 
\tau (k) = k \mbox{ for all } k>n+1 \}$ and where 
for $\tau \in S_{n+1}$ we put
\[
c_{n,p} ( \tau ) :=  \ \vline \ \Bigl\{ 
\uI : \{ 1, \ldots , p \} \to \{ 1, \ldots , n \}
\mid \gamma_{\uI (1)} \cdots \gamma_{\uI (p)} = \tau
\Bigr\} \ \vline \, .
\]
In connection to this, we record here the intriguing
fact that one has precise enumerative formulas for the
related cardinalities
\[
\cciup_{n,p} ( \tau ) := \ \vline \ \Bigl\{
\begin{array}{c}
\uI : \{ 1, \ldots , p \} \to \{ 1, \ldots , n \}, \\
\mbox{ surjective } 
\end{array} \mid 
\gamma_{\uI (1)} \cdots \gamma_{\uI (p)} = \tau
\Bigr\} \ \vline \, , 
\]
counting the so-called ``transitive'' factorizations
of $\tau \in S_{n+1}$ into a product of $p$ 
star-generators.  (See Theorem 1.1 of \cite{GoJa2009}
and the subsequent discussion in Section 1.2.1 of that 
paper, relating to previous results from 
\cite{IrRa2009, Pa1999}.)  We were not able, however, to
use this circle of combinatorial ideas in order to
provide an alternative proof for Theorem \ref{thm:11}
of the present paper.  Such an alternative proof may be 
possible, but let us in any case note that in Equation 
(\ref{eqn:46a}) we need to pursue the case when $p$ 
is fixed and $n \to \infty$, while the factorizations
into star-generators counted by $c_{n,p} ( \tau )$
cannot be transitive for $n > p$.
\end{remark}

$\ $

\section{Connection to GUE matrices 
and proof of Proposition \ref{prop:111} }

\begin{notation}   \label{def:51}
Throughout this section we fix a $d \in \bN$ and we 
continue to use the framework considered in Section 4.  So 
we consider, same as in Proposition \ref{prop:44}, the 
exchangeable sequence $( \gamma_n )_{n=1}^{\infty}$ in the 
$*$-probability space $( \bC [ S_{\infty} ], \varphi_d )$.  
Moreover, same as in Notation \ref{def:19}(1) of the 
Introduction, we let 
$\bu_d : \sqcup_{k=1}^{\infty} \cP (k) \to \bC$
be the function on partitions associated to 
$( \gamma_n )_{n=1}^{\infty}$, and we let 
$\nu^{( \infty , d)} : 
\bC \langle \, \{ X_p \mid p \in \bN \} \, \rangle
\to \bC$ be the linear functional constructed by using 
the function $\bu_d$.

The goal of the section is to point out a connection 
between the exchangeable sequence of (non-centered!) 
star-generators $( \gamma_n )_{n=1}^{\infty}$ and a 
(centered) exchangeable sequence which arises naturally 
in connection to GUE matrices.
\end{notation}

$\ $

\begin{remark-and-notation}   \label{rem:52}
{\em (Sequence of $d \times d$ GUE matrices.) }

\noindent
Let $( \Omega , \cF , P )$ be a probability space, let
$L^{\infty -} ( \Omega, \cF , P )$ be the algebra of complex 
random variables with finite moments of all orders on $\Omega$,  
and let $E : L^{\infty -} ( \Omega , \cF , P) \to \bC$ 
be the expectation functional.  Given a $d \in \bN$ we can then 
consider the $*$-probability space
\begin{equation}  \label{eqn:52a}
\Bigl( \, \cM_d \, \bigl( L^{\infty -} ( \Omega , \cF , P ) \, \bigr),
E \circ \tr_d \, \Bigr),
\end{equation}
where $\cM_d \bigl( L^{\infty -} ( \Omega , \cF , P) \bigr)$ is 
the $*$-algebra of $d \times d$ matrices with entries from 
$L^{\infty -} ( \Omega , \cF , P)$ and where 
$\tr_d : \cM_d \bigl( L^{\infty -} ( \Omega , \cF , P) \bigr)
\to L^{\infty -} ( \Omega , \cF , P)$ is the normalized trace.

Suppose now that in $L^{\infty -} ( \Omega , \cF , P )$ we have
a countable family of independent Gaussian random variables, 
denoted as
\begin{equation}  \label{eqn:57b}
\{ \xi_{i,j}^{(n)} \mid 1 \leq i \leq j \leq d, \ n \in \bN \} 
\cup \{ \eta_{i,j}^{(n)} \mid 1 \leq i < j \leq d, \ n \in \bN \},
\end{equation}
where all the random variables in (\ref{eqn:57b}) are centered
and have variances given by
\[
\mbox{Var} ( \xi_{i,i}^{(n)} ) = \frac{1}{d}, \ \ \forall 
\, 1 \leq i \leq d, \, n \in \bN, \ 
\ \mbox{Var} ( \xi_{i,j}^{(n)} ) 
= \mbox{Var} ( \eta_{i,j}^{(n)} ) = \frac{1}{2d}, \ \ \forall 
\, 1 \leq i < j \leq d, \, n \in \bN .
\]
For every $n \in \bN$, consider the selfadjoint matrix
$M_n \in \cM_d 
\bigl( \, L^{\infty -}( \Omega , \cF , P ) \, \bigr)$
with entries described as follows:

\vspace{6pt}

-- for every $1 \leq i \leq d$, the $(i,i)$-entry of $M_n$ is
$\xi_{i,i}^{(n)}$;

\vspace{6pt}

-- for every $1 \leq i < j \leq d$, the $(i,j)$-entry of $M_n$ is
$\xi_{i,j}^{(n)} + \sqrt{-1} \, \eta_{i,j}^{(n)}$, 

\hspace{0.3cm} and the $(j,i)$-entry of $M_n$ is 
$\xi_{i,j}^{(n)} - \sqrt{-1} \, \eta_{i,j}^{(n)}$.  

\vspace{6pt}

\noindent
These $M_n$'s form what is called a sequence of random  
$d \times d$ {\em GUE matrices} of variance $1$, with 
independent entries.  

For the present paper it will be of relevance to note 
that $( M_n )_{n=1}^{\infty}$ is an exchangeable sequence
of selfadjoint elements in the $*$-probability space
(\ref{eqn:52a}).  One has, moreover, a precise formula 
for computing joint moments of $M_n$'s, which is obtained
by invoking the so-called ``Wick formula'' for moments of 
Gaussian random variables, and is reviewed in Remark 
\ref{rem:54} below.  The formula presented in Remark 
\ref{rem:54} will refer to some specific permutations 
in $S_{\infty}$, so we first take a moment to give names 
to these permutations.
\end{remark-and-notation}

\begin{notation}    \label{def:53}
(1) Let $k = 2h$ be an even positive integer and let 
$\pi = \bigl\{ \, \{ a_1, b_1 \}, \ldots , 
\{ a_h , b_h \} \, \bigr\}$ be a pair-partition in 
$\cP_2 (k)$.  We will denote 
\[
\perm_{\pi} := (a_1, b_1) \cdots (a_h, b_h) \in S_{\infty}
\]
(commuting product of $h$ disjoint transpositions, corresponding to
the $h$ pairs in $\pi$).

\vspace{6pt}

(2) For every $k \in \bN$ we will denote 
\[
\cyctok := (1,2, \ldots , k) \in S_{\infty}
\mbox{ and }
\cyckto := \cyctok^{-1} = (k, \ldots , 2,1) \in S_{\infty}.
\]
In particular, $\mbox{c}_{1 \to 1}$ is the identity permutation
of $\bN$, and $\mbox{c}_{1 \to 2}$ is the first star-generator 
$\gamma_1$. 
\end{notation}

\begin{remark}   \label{rem:54}
{\em (GUE moments via the Wick formula.)}
We now consider again the GUE matrices $(M_n)_{n=1}^{\infty}$ 
from Notation \ref{rem:52}, which we view as a sequence of selfadjoint 
elements in the $*$-probability space from (\ref{eqn:52a}).  
The joint moments of the $M_n$'s can be described as follows: for 
every $k \in \bN$ and $\uI = ( \uI (1), \ldots , \uI (k) ) \in \bN^k$,
one has
\begin{equation}   \label{eqn:54a}
( E \circ \tr_d) ( M_{\uI (1)} \cdots M_{\uI (k)} ) = 
\frac{1}{ d^{(k+2)/2} } 
\, \sum_{ \pi \in \cP_2 (k),  
\ \pi \leq Ker ( \uI )} 
\ d^{\# ( \cyctok  \perm_{\pi} | \{ 1, \ldots , k \})}.
\end{equation}
On the right-hand side of Equation (\ref{eqn:54a}) we have 
used the convention introduced in Notation \ref{def:41}(3): 
``$\# ( \cyctok \perm_{\pi} | \{ 1, \ldots , k \} )$'' stands 
for the number of orbits into which the permutation 
$\cyctok \perm_{\pi} \in S_{\infty}$ breaks the invariant 
finite set $\{ 1, \ldots , k \}$. 

Some concrete examples: when we evaluate 
$( E \circ \tr_d ) (M_1 M_2 M_1 M_2)$, the sum on the right-hand 
side of (\ref{eqn:54a}) has only one term, which is $d^1$, as one
sees upon multiplying 
$(1,2,3,4) \cdot \bigl( (1,3)(2,4) \bigr) = (4,3,2,1) 
\in S_{\infty}$.  But when we evaluate 
$( E \circ \tr_d ) (M_1^4 )$, the sum on the right-hand 
side of (\ref{eqn:54a}) has 3 terms, corresponding to the 3
pairings in $\cP_2(4)$; upon performing the suitable 
multiplications in $S_{\infty}$ we find that these 3 terms add 
up to $2d^3 + d$.  Equation (\ref{eqn:54a}) thus tells us that:
\[
( E \circ \tr_d ) (M_1 M_2 M_1 M_2) = \frac{1}{d^2},
\mbox{ while }
( E \circ \tr_d ) (M_1^4 ) = 2 + \frac{1}{d^2}.
\]

Note that Equation (\ref{eqn:54a}) gives in particular an explicit 
writing for the moments of the common distribution of the $M_n$'s 
in the $*$-probability space (\ref{eqn:52a}).  This distribution is 
called the {\em average empirical eigenvalue distribution} of a GUE 
and was denoted by $\nu_d$ in the statement of Theorem \ref{thm:11} 
in the Introduction.  More precisely, one finds that for every 
$k \in \bN$ one has 
\begin{equation}  \label{eqn:54b}
\int_{\bR} t^k \, d \nu_d (t) = ( E \circ \tr_d )( M_1^k)
= \frac{1}{ d^{(k+2)/2} } \ \sum_{\pi \in \cP_2 (k)} \ 
d^{\# ( \cyctok  \perm_{\pi} | \{ 1, \ldots , k \})},
\end{equation}
with the usual convention that the latter sum is equal to $0$ 
for $k$ odd.  So, for instance, the moments of order 2,4 and 6 of 
$\nu_d$ come out as $1$, $2 + \frac{1}{d^2}$ and respectively 
$5 + \frac{10}{d^2}$, as one can easily see via a direct 
evaluation of the right-hand side of (\ref{eqn:54b}).

For a presentation of how the Wick formula is used in order to derive
Equation (\ref{eqn:54a}), the reader can consult for instance the survey 
paper \cite{Sp2016}.  (See Theorem 2.7 and the calculation preceding it 
in \cite{Sp2016}, where one must also insert the superscript indices 
``$(n)$'' used in our Notation \ref{rem:52} in the description of the 
entries of $M_n$.)
\end{remark}

\begin{remark-and-notation}    \label{rem:55}
Since the summation on the right-hand side of Equation 
(\ref{eqn:54a}) only depends on the partition $\Ker ( \uI )$ 
(rather than depending on $\uI$ itself), it is clear that 
$(M_n)_{n=1}^{\infty}$ is an exchangeable sequence in the 
$*$-probability space from (\ref{eqn:52a}).  We will use the 
notation 
\begin{equation}  \label{eqn:55a}
\bs_d : \sqcup_{k=1}^{\infty} \cP (k) \to \bC
\end{equation}
for the function on partitions associated to this exchangeable 
sequence.  Equation (\ref{eqn:54a}) provides us with an explicit 
formula for $\bs_d$, which reads as follows: for every 
$k \in \bN$ and $\rho \in \cP (k)$, one has 
\begin{equation}  \label{eqn:55b}
\bs_d ( \rho ) =  
\sum_{\pi \in \cP_2 (k), \, \pi \leq \rho} 
(1/d)^{(k+2)/2 - \# (\cyctok \perm_{\pi} | \{ 1, \ldots , k \})}.
\end{equation}

As a special case of Equation (\ref{eqn:55b}), let us observe that
$\bs_d ( \rho ) = 0$ whenever it is not true that all blocks of 
$\rho$ have even cardinality (since in that case the sum on the 
right-hand side of (\ref{eqn:55b}) is empty).  In particular it 
follows that $\bs_d ( \rho ) = 0$ whenever $\rho$ has at least one
block of cardinality $1$, and we thus see that 
$( M_n )_{n=1}^{\infty}$ fulfills all the conditions which are 
needed in the CLT for exchangeable sequences, Theorem \ref{thm:25}.

It comes in handy to also record here that 
for a pair-partition $\pi \in \cP_2 (k)$ one has
\begin{equation}  \label{eqn:55c}     
\bs_d ( \pi ) = (1/d)^{ (k+2)/2 -
         \# ( \cyctok \perm_{\pi} \mid \{ 1, \ldots , k \} ) } .
\end{equation}
Indeed, if $\pi$ is a pair-partition, then the sum on the 
right-hand side of (\ref{eqn:55b}) has only one term,
corresponding to $\pi$ itself.

The main point of the present section is to observe a 
connection between the functions on partitions $\bs_d$ (which
was just defined) and $\bu_d$ (corresponding to the 
star-generators $( \gamma_n )_{n=1}^{\infty}$ of $S_{\infty}$, 
and coming from the Notation \ref{def:19}(2) in the Introduction).  
Of course, $\bs_d$ and $\bu_d$ cannot coincide, since evaluating 
them at the unique partition $\pi_1 \in \cP (1)$ gives 
$\bu_d ( \pi_1 ) = 1/d$ (the common value 
$\varphi_d ( \gamma_n )$ for all $n \in \bN$) 
and $\bs_d ( \pi_1 ) = 0$ (the common value 
$( E \circ \tr_d ) (M_n)$ for all $n \in \bN$). But nevertheless,
it turns out that we have the following proposition.
\end{remark-and-notation} 

$\ $

\begin{proposition}  \label{prop:56}
The functions on partitions $\bs_d$ (from Equation (\ref{eqn:55b}))
and $\bu_d$ (from Notation \ref{def:19}(2)) coincide on 
$\sqcup_{k=1}^{\infty} \cP_2 (k)$.
\end{proposition}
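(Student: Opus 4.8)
The plan is to prove the equality $\bs_d(\pi) = \bu_d(\pi)$ for every pair-partition $\pi \in \cP_2(k)$ by computing both sides explicitly and matching them. The right-hand side, $\bs_d(\pi)$, is already given in closed form by Equation~(\ref{eqn:55c}): it equals $(1/d)^{(k+2)/2 - \#(\cyctok \perm_\pi \mid \{1,\dots,k\})}$. So the real work is to compute $\bu_d(\pi)$, the function-on-partitions value for the star-generators, and show it matches this. By Definition~\ref{def:17}(1), $\bu_d(\pi) = \varphi_d(\gamma_{\uI(1)} \cdots \gamma_{\uI(k)})$ for any tuple $\uI$ with $\Ker(\uI) = \pi$. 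Since $\pi$ is a pair-partition of $\{1,\dots,k\}$ with $k=2h$, such a tuple uses $h$ distinct indices $n_1,\dots,n_h$, each appearing exactly twice, in the positions dictated by the pairs of $\pi$.

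Let me describe the computation I would carry out. First I would fix a convenient choice of $\uI$ realizing $\pi$, say using indices $1,\dots,h$, and write the product $\gamma_{\uI(1)} \cdots \gamma_{\uI(k)}$ as a word in the transpositions $\gamma_{n_s} = (1, n_s+1)$. Remembering that $\varphi_d(\tau) = (1/d)^{\|\tau\|}$ and that $\|\tau\| = |A| - \#(\tau \mid A)$ for any finite invariant set $A$ containing the support (Notation~\ref{def:41}(3), Equation~(\ref{eqn:41c})), the goal reduces to computing $\|\gamma_{\uI(1)} \cdots \gamma_{\uI(k)}\|$ in terms of a cycle count. The key structural observation is that the $\gamma_n$ are all conjugate in a uniform way to the adjacent transpositions appearing in $\perm_\pi$: the first star-generator is $\gamma_1 = \mbox{c}_{1\to 2}$, and more generally the star-generator word should be convertible, via a single global conjugation by a fixed permutation, into the product $\cyctok \perm_\pi$ (or something conjugate to it) that governs the GUE side. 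I would make this precise by exhibiting the conjugating permutation explicitly and verifying the identity on the relevant finite set.

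The cleanest route is to show directly that the permutation $\gamma_{\uI(1)} \cdots \gamma_{\uI(k)}$, for a tuple $\uI$ with $\Ker(\uI) = \pi$, is conjugate in $S_\infty$ to $\cyctok \perm_\pi$, so that the two have equal length. I would exploit the identity $\gamma_n = (1,n+1)$ and recognize that a product of star-generators is computed by tracking how the letter $1$ and the letters $n_s + 1$ get shuffled; comparing this against the action of $\cyctok\,\perm_\pi$ on $\{1,\dots,k\}$ should reveal the same orbit structure. Concretely, with $A$ chosen as the union of $\{1\}$ and the support of the $\gamma_{n_s}$, and noting $|A| = h+1$, Equation~(\ref{eqn:41c}) gives
\[
\| \gamma_{\uI(1)} \cdots \gamma_{\uI(k)} \| = (h+1) - \#\bigl( \gamma_{\uI(1)} \cdots \gamma_{\uI(k)} \mid A \bigr),
\]
and I would match the cycle count $\#(\gamma_{\uI(1)} \cdots \gamma_{\uI(k)} \mid A)$ against $\#(\cyctok \perm_\pi \mid \{1,\dots,k\})$ so that the resulting exponent $\|\cdot\|$ equals $(k+2)/2 - \#(\cyctok\perm_\pi \mid \{1,\dots,k\})$, which is exactly the exponent in Equation~(\ref{eqn:55c}). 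Here I would use $k = 2h$, so $h+1 = (k+2)/2$, which makes the two exponents line up provided the two cycle counts agree.

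The main obstacle will be establishing the combinatorial identity relating the cycle structure of the star-generator product $\gamma_{\uI(1)} \cdots \gamma_{\uI(k)}$ on $A$ to the cycle structure of $\cyctok \perm_\pi$ on $\{1,\dots,k\}$. This is where the specific ``star'' shape of the generators (all transpositions sharing the common point $1$) must be used, and it requires care in setting up a bijection between the orbits on the two sides. I anticipate that the correct bijection comes from a relabeling that sends position $j \in \{1,\dots,k\}$ to the letter $n_s+1$ associated with the pair containing $j$, together with the common point $1$; verifying that this relabeling intertwines the two permutations is the crux. Once that orbit correspondence is in place, the equality of lengths — and hence of the $\varphi_d$-values with the explicit GUE formula~(\ref{eqn:55c}) — follows immediately, completing the proof.
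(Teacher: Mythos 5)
Your reduction of the problem is the right one, and it matches the paper's own reduction (Lemma \ref{lemma:59} plus Remark \ref{rem:515}(1)): since $\bu_d(\pi) = (1/d)^{\| \gamma_{\uI(1)} \cdots \gamma_{\uI(2h)} \|}$ with $\Ker(\uI) = \pi$, and the word moves only $\{1,\dots,h+1\}$, everything comes down to the orbit-count identity $\#\bigl( q_{\pi} \mid \{1,\dots,h+1\} \bigr) = \#\bigl( \cyctok \, \perm_{\pi} \mid \{1,\dots,2h\} \bigr)$, where $q_\pi$ denotes the star-generator product. But the mechanism you propose for proving this identity --- showing that $q_\pi$ is \emph{conjugate} in $S_\infty$ to $\cyctok \perm_\pi$, ``so that the two have equal length,'' via a bijective relabeling intertwining the two actions --- is false, and equal length is not even the correct target. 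Take $k=4$, $h=2$, $\pi = \{\{1,3\},\{2,4\}\}$: then $q_\pi = \gamma_1\gamma_2\gamma_1\gamma_2 = (1,2,3)$, a $3$-cycle with $\|q_\pi\| = 2$, while (as computed in Remark \ref{rem:54}) $\mbox{c}_{1\to 4}\,\perm_\pi = (4,3,2,1)$, a $4$-cycle with length $3$. The two permutations are not conjugate, and their lengths differ; what agrees is only the orbit counts, $1 = 1$, taken over sets of \emph{different} cardinalities ($h+1 = 3$ versus $2h = 4$). This size mismatch is structural: the GUE exponent $(k+2)/2 - \#(\cyctok\perm_\pi \mid \{1,\dots,2h\})$ equals $\|\cyctok\perm_\pi\| - (h-1)$, not $\|\cyctok\perm_\pi\|$, so no conjugation argument can close the gap, and your proposed relabeling (position $j \mapsto$ the letter attached to the pair containing $j$) is two-to-one from $\{1,\dots,2h\}$ onto $h$ letters, hence not a bijection that could intertwine the two permutations.

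The paper gets around exactly this obstruction with a subtler device: instead of restricting $\perm_\pi \cdot \mbox{c}_{2h+1\to 1}$ to an invariant set, it considers the \emph{induced} (first-return) permutation, in the sense of Notation \ref{def:511}, on the non-invariant $(h+1)$-element set $\{b_0, b_1, \dots, b_h\}$ of right legs of $\pi$ (with $b_0 := 2h+1$). Lemma \ref{lemma:512} shows this induced permutation is conjugate, via $i \mapsto b_{i-1}$, to $q_\pi \mid \{1,\dots,h+1\}$ --- this is where the fine multi-case analysis of Remark \ref{rem:510} about how the star word transports the point $1$ is needed; Lemma \ref{lemma:513} shows every orbit of $\perm_\pi \cdot \mbox{c}_{2h+1\to 1}$ in $\{1,\dots,2h+1\}$ meets $\{b_0,\dots,b_h\}$, so the orbit counts on the two differently-sized sets agree; and a final insertion argument passes from $2h+1$ points back to $\#(\perm_\pi \cdot \mbox{c}_{1\to 2h} \mid \{1,\dots,2h\})$. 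Your intuition that the pairs of $\pi$ should mediate the orbit correspondence is in the right direction (the paper's bijection does run through the blocks of $\pi$, one representative per block), but as stated your plan would stall at the conjugacy claim, which the $k=4$ crossing pairing already refutes.
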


$\ $

If we assume Proposition \ref{prop:56}, then it is easy to write 
down the proof (announced in the title of the section) for 
Proposition \ref{prop:111}.

$\ $

\begin{ad-hoc-item}  
{\bf Proof of Proposition \ref{prop:111},} 
{\em by assuming Proposition \ref{prop:56}.}

\noindent
The linear functional $\nu^{( \infty, d)} : 
\bC \langle \{ X_p \mid p \in \bN \} \rangle \to \bC$ 
was introduced in Notation \ref{def:19}(1) in reference to the 
function on partitions $\bu_d$.  But the definition of
$\nu^{( \infty, d)}$ (following the recipe from Definition
\ref{def:17}(2)) actually uses only the values of $\bu_d$
on pair-partitions.  Due to Proposition \ref{prop:56},
the linear functional $\nu^{( \infty, d)}$ can then be viewed
as coming from the function on partitions $\bs_d$, hence from 
the exchangeable sequence $(M_n)_{n=1}^{\infty}$.  It was 
noticed in Remark \ref{rem:55} that $(M_n)_{n=1}^{\infty}$ 
and $\bs_d$ satisfy the hypotheses of the exchangeable CLT,
Theorem \ref{thm:25}.  This theorem implies that
$\nu^{( \infty, d)}$ is a positive linear functional. 

Moreover, let us fix an $r \in \bN$.  For every even 
$k \in \bN$ and 
$\uI = ( \uI (1), \ldots , \uI (k) ) \in \{ 1, \ldots , r \}^k$, 
the recipe used for writing 
$\nu^{( \infty, d)} ( X_{\uI (1)} \cdots X_{\uI (k)} )$ in terms 
of $\bs_d$ (cf. Definition \ref{def:17}(2)), followed by the 
explicit formula for $\bs_d ( \pi )$ in Equation (\ref{eqn:55c}) 
give us that
\[
\nu^{( \infty, d)} ( X_{\uI (1)} \cdots X_{\uI (k)} )
= \sum_{ \pi \in \cP_2 (k), \ \pi \leq Ker ( \uI ) } \ 
(1/d)^{(k+2)/2 - \# ( \cyctok  \perm_{\pi} 
                      | \{ 1, \ldots , k \} )} .
\]
But the right-hand side of the latter equation can be continued 
with
\[
\begin{array}{ll}
= & ( E \circ \tr_d ) ( M_{\uI (1)} \cdots M_{\uI (k)} ) \ 
   \mbox{ (by Equation (\ref{eqn:54a})) }                       \\
= & \nu^{(r,d)} ( X_{\uI (1)} \cdots X_{\uI (k)} )  \
   \mbox{ (by the definition of $\nurd$, 
           Equation (\ref{eqn:16a})). }
\end{array}
\]
The equality 
$\nu^{( \infty, d)} ( X_{\uI (1)} \cdots X_{\uI (k)} )
= \nu^{(r,d)} ( X_{\uI (1)} \cdots X_{\uI (k)} )$
also holds for any odd $k$ and 
$( \uI (1), \ldots , \uI (k) ) \in \{ 1, \ldots , r \}^k$, 
when both sides of the equality are equal to $0$.  This shows 
that the restriction of $\nu^{( \infty , d)}$ to 
$\bC \langle X_1, \ldots , X_r \rangle$ is equal to 
$\nu^{(r,d)}$, as required. 
\hfill $\square$
\end{ad-hoc-item}

$\ $

We are left with the job of proving Proposition \ref{prop:56}.
We start on this job by observing an explicit formula 
for how $\bu_d$ acts on pair-partitions.  This will be, in 
a certain sense, parallel to the formula recorded for 
$\bs_d$ in Equation (\ref{eqn:55c}), only that we need to use 
a different way of constructing a permutation out of a 
pair-partition.

$\ $

\begin{notation}    \label{def:58}
Let $k = 2h$ be an even positive integer, and let $\pi$ be a 
pair-partition in $\cP_2 (k)$.  We consider the unique way
of writing $\pi$ in the form
\begin{equation}   \label{eqn:58a} 
\left\{  \begin{array}{l}
\pi = \{ V_1, \ldots , V_h \}, \mbox{ with }
V_1 = \{ a_1, b_1 \}, \ldots , V_h = \{ a_h, b_h \}    \\
\mbox{where } a_1 < b_1, \ldots , a_h < b_h \mbox{ and }
a_1 < a_2 < \cdots < a_h.
\end{array}   \right.
\end{equation}
We will use the notation $q_{\pi}$ for the permutation
\begin{equation}   \label{eqn:58b} 
q_{\pi} :=
\gamma_{\uI(1)} \gamma_{\uI(2)} \cdots \gamma_{\uI(2h)} 
\in S_{\infty},
\end{equation}
where the tuple 
$\uI : \{ 1, \ldots , 2h \} \to \{ 1, \ldots , h \}$
is defined by putting
\begin{equation}   \label{eqn:58c} 
\uI( a_1 ) = \uI( b_1 ) = 1, \ldots , 
\uI( a_h ) = \uI( b_h ) = h.
\end{equation}
It is useful to keep in mind that: since the product 
defining $q_{\pi}$ in Equation (\ref{eqn:58b}) only 
uses the star-generators 
$\gamma_1 = (1,2), \ldots , \gamma_h = (1, h+1)$, one 
has $q_{\pi} (m) = m$ for all $m > h+1$.

\vspace{6pt}

\noindent
[A concrete example: say that 
$\pi = \bigl\{ \, \{ 1,5 \}, \, \{ 2,4 \}, \, 
       \{ 3,7 \}, \, \{ 6,8 \} \, \bigr\} \in \cP_2 (8)$.
Then 
\[
q_{\pi} 
= \gamma_1 \gamma_2 \gamma_3 \gamma_2 \gamma_1
  \gamma_4 \gamma_3 \gamma_4                    
= (1,2) (1,3) (1,4) (1,3) (1,2) (1,5) (1,4) (1,5) 
= (3,4,5) \in S_{\infty}.
\]
Note that this is quite different from the permutation 
$p_{\pi}$ associated to $\pi$ in Notation \ref{def:53},
which was simply 
$p_{\pi} = (1,5) (2,4) (3,7) (6,8) \in S_{\infty}$.]
\end{notation}

\begin{lemma}   \label{lemma:59}
For every even $k = 2h \in \bN$ and $\pi \in \cP_2 (k)$
one has:
\begin{equation}   \label{eqn:59a}
\bu_d (\pi) = (1/d)^{ (h+1) - 
          \# ( q_{\pi} \mid \{ 1, \ldots , h+1 \} ) } .
\end{equation}
\end{lemma}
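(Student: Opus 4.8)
The plan is to unwind the definition of $\bu_d$ and reduce everything to the length formula for $\varphi_d$. By Definition \ref{def:17}(1), the value $\bu_d(\pi)$ equals $\varphi_d(\gamma_{\uI(1)} \cdots \gamma_{\uI(k)})$ for \emph{any} tuple $\uI \in \bN^k$ with $\Ker(\uI) = \pi$, this being unambiguous precisely because the sequence $(\gamma_n)_{n=1}^{\infty}$ is exchangeable (Proposition \ref{prop:44}(1)). The key observation is that the tuple $\uI : \{1, \ldots, 2h\} \to \{1, \ldots, h\}$ prescribed in Equation (\ref{eqn:58c}), which is used to build $q_{\pi}$ in (\ref{eqn:58b}), has level sets exactly the blocks $\{a_r, b_r\}$ of $\pi$, and hence $\Ker(\uI) = \pi$. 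This particular tuple is therefore a legitimate representative in the formula for $\bu_d(\pi)$, and one gets directly
\[
\bu_d(\pi) = \varphi_d(\gamma_{\uI(1)} \cdots \gamma_{\uI(2h)}) = \varphi_d(q_{\pi}).
\]

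Next I would invoke the definition of $\varphi_d$ from (\ref{eqn:1b}), which gives $\varphi_d(q_{\pi}) = (1/d)^{|| q_{\pi} ||}$, so that it only remains to rewrite the length $|| q_{\pi} ||$ in the shape appearing on the right-hand side of (\ref{eqn:59a}). For this I would use the orbit-counting formula (\ref{eqn:41c}), namely $|| \tau || = |A| - \#(\tau \mid A)$, valid for any finite $A \subseteq \bN$ with $\tau(b) = b$ for all $b \notin A$. Since the product defining $q_{\pi}$ uses only the star-generators $\gamma_1, \ldots, \gamma_h$, the permutation $q_{\pi}$ fixes every $m > h+1$ (as already recorded in Notation \ref{def:58}); thus $A = \{1, \ldots, h+1\}$ is an admissible choice, and (\ref{eqn:41c}) yields $|| q_{\pi} || = (h+1) - \#(q_{\pi} \mid \{1, \ldots, h+1\})$. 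Substituting this exponent produces exactly (\ref{eqn:59a}).

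The argument is essentially a bookkeeping exercise chaining together already-established definitions, so I do not expect a serious obstacle. The one point requiring genuine care is the verification that the specific encoding tuple $\uI$ of (\ref{eqn:58c}) realizes the kernel $\pi$, since it is this fact that licenses replacing an arbitrary representative by $q_{\pi}$ when evaluating $\bu_d(\pi)$. A secondary check is that the range restriction of $\uI$ to $\{1, \ldots, h\}$ --- forced by the ordering convention $a_1 < a_2 < \cdots < a_h$ built into (\ref{eqn:58a}) --- is what guarantees that $q_{\pi}$ fixes all $m > h+1$, so that the choice $A = \{1, \ldots, h+1\}$ in (\ref{eqn:41c}) is exactly tight and delivers the exponent with $|A| = h+1$ rather than a needlessly larger value.
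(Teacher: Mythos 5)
Your proposal is correct and follows essentially the same route as the paper's own proof: observe that the encoding tuple $\uI$ of (\ref{eqn:58c}) has $\Ker(\uI) = \pi$, so that $\bu_d(\pi) = \varphi_d(q_\pi) = (1/d)^{||q_\pi||}$, and then convert $||q_\pi||$ into the orbit-count expression via (\ref{eqn:41c}) applied to $A = \{1,\ldots,h+1\}$, using that $q_\pi$ fixes every $m > h+1$. The paper's proof is exactly this chain of equalities, so there is nothing to add.
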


\begin{proof}
The tuple $\uI$ which appeared in the construction of $q_{\pi}$ 
(cf. Equation (\ref{eqn:58c})) has $\Ker (\uI) = \pi$; hence, 
directly from the definition of $\bu_d$, we get that
\[
\bu_d ( \pi )
= \varphi_d \bigl( 
\gamma_{\uI(1)} \cdots \gamma_{\uI(2h)} \bigr) .
\]
This can be followed with
\[
= \varphi_d ( q_{ \pi } )
= (1/d)^{ || q_{ \pi } || }
= (1/d)^{ (h+1) - \# ( q_{\pi} \mid \{ 1, \ldots , h+1 \} ) },
\]
where the third equality invokes the description of 
$|| q_{\pi} ||$ provided by Remark \ref{def:41}(3).
\end{proof}

$\ $

\begin{remark}    \label{rem:515} 

(1) For any even $k = 2h \in \bN$ and $\pi \in \cP_2 (k)$, 
we now have both values $\bs_d ( \pi )$ and $\bu_d ( \pi )$
written as powers of $1/d$, with non-negative integer 
exponents (in Equations (\ref{eqn:55c}) and 
(\ref{eqn:59a}), respectively).
The proof of Proposition \ref{prop:56} is thus reduced to
checking the equality of the said exponents.  That is, 
we are left to verify the following combinatorial formula 
relating the permutations $p_{\pi}$ and $q_{\pi}$:
\begin{equation}   \label{eqn:515a}
(h+1) - \# ( 
\mbox{c}_{1 \to 2h} p_{\pi} | \{ 1, \ldots , 2h \} ) 
= (h+1) - \# ( q_{\pi} | \{ 1, \ldots , h+1 \} ).
\end{equation}

\vspace{6pt}

(2) It is instructive to have a look at the special case 
when the pair-partition $\pi \in \cP_2 (k)$ is non-crossing.
In this case one gets $\bs_d ( \pi ) = \bu_d ( \pi ) =1$,
that is, both sides of Equation (\ref{eqn:515a}) are equal 
to $0$.  This is immediate on the right-hand side, where it 
is easily seen that if $\pi$ is non-crossing then $q_{\pi}$ 
must be the identity permutation in $S_{\infty}$.  (Indeed, 
if $\pi$ is non-crossing then in the canonical writing 
(\ref{eqn:58a}) of $\pi$ there has to be a block 
$V_i = \{ a_i, b_i \}$ with $b_i = 1+ a_i$; so the product
(\ref{eqn:58b}) which defines $q_{\pi}$ has two adjacent 
occurrences of $\gamma_i$ -- remove them and continue by 
induction.)  

The fact that the left-hand side of (\ref{eqn:515a}) is also
equal to $0$ for a non-crossing $\pi$ is less immediate; it
can for instance be explained by using the notion of Kreweras 
complement of $\pi$, and the known formula for the number of 
blocks of the Kreweras complement (see e.g. pages 147-148 in 
Lecture 9 of \cite{NiSp2006}).

\vspace{6pt}

(3) We mention that yet another function on pair-partitions,
related to the same $*$-probability space 
$( \bC [ S_{\infty} ], \varphi_d )$ as considered in the 
present paper, was studied in \cite{BoGu2002} (see Theorem 3.4 of 
\cite{BoGu2002}, where the sequence $\alpha_1, \alpha_2, \ldots$ 
of the theorem has to be specialized to 
$\alpha_1 = \cdots = \alpha_d = 1/d$ and $\alpha_i = 0$ for 
$i > d$).  Denoting the function on pair-partitions studied in 
\cite{BoGu2002} by ``$\bw_d$'', it is also the case that for 
every even $k \in \bN$ and $\pi \in \cP_2 (k)$ one has 
$\bw_d ( \pi ) = (1/d)^m$ for some $m \in \bN \cup \{ 0 \}$, 
with $m = 0$ when $\pi$ is non-crossing.  But experimenting with 
small values of $k$ doesn't suggest a direct connection between 
$\bw_d$ and $\bs_d = \bu_d$ (for instance in the example of 
$\pi \in \cP_2 (8)$ used for illustration at the end of Notation 
\ref{def:58}, one gets $\bs_d ( \pi ) = \bu_d ( \pi ) = (1/d)^2$ 
and $\bw_d ( \pi ) = (1/d)^3$).
\end{remark}

$\ $

In order to complete the proof of Proposition \ref{prop:56},
the ingredient still needed is a good understanding 
of how the permutation $q_{\pi}$ works, for a pairing 
$\pi \in \cP_2 (k)$.  We look more carefully at this, in the 
next remark.

\begin{remark}    \label{rem:510}
Let $\pi \in \cP_2 (k)$ for an even $k = 2h \in \bN$, in 
reference to which we use the notation considered above.  As 
noticed at the end of Notation \ref{def:58}, $q_{\pi}$ can only 
move the numbers from $\{ 1, \ldots , h+1 \}$.  It is instructive 
to examine in detail how $q_{\pi}$ acts on a specified 
$j \in \{ 1, \ldots , h+1 \}$.  We will look at a $j \neq 1$ 
(the case $j=1$ is only slightly different from the others).

Among $\gamma_1, \ldots , \gamma_h$, the only transposition that 
actually moves $j$ is $\gamma_{j-1}$, which appears in the product 
(\ref{eqn:58b}) on positions $a_{j-1}$ and $b_{j-1}$ (here the block 
$V_{j-1} = \{ a_{j-1} , b_{j-1} \}$ is as in the explicit writing of 
$\pi$ from (\ref{eqn:58a})).  When we successively apply the factors
$\gamma_{\uI(2h)}, \gamma_{\uI(2h-1)}, \ldots$  from (\ref{eqn:58b}) 
to $j$, the first time when $j$ is actually moved thus occurs when 
we do
\begin{equation}   \label{eqn:510a}
\gamma_{\uI( b_{j-1} )} (j) = \gamma_{j-1} (j) = 1.
\end{equation}
The value $1$ is then immediately moved by the next factor 
(reading from right to left) in the product, 
$\gamma_{\uI( b_{j-1} - 1)}$, and there are several possible 
cases for how this can go:

\vspace{6pt}

{\em Case 1.} $b_{j-1} - 1$ still belongs to $V_{j-1}$, that is, 
we have $b_{j-1} - 1 = a_{j-1}$.

In this case we get 
$\gamma_{\uI( b_{j-1} - 1)} (1) = \gamma_{\uI( a_{j-1} )} (1) 
= \gamma_{j-1} (1) = j$, and it follows
that $j$ is a fixed point of $q_{\pi}$, since none of the 
factors to the left of $\gamma_{\uI ( a_{j-1} )}$ in the product 
(\ref{eqn:58b}) can move $j$.

\vspace{6pt}

{\em Case 2.} $b_{j-1} - 1$ belongs to $V_{i-1}$ for an $i \neq j$
in $\{ 2, \ldots , h+1 \}$, and $b_{j-1} - 1 = a_{i-1}$.

In this case we get 
$\gamma_{\uI( b_{j-1} - 1)} (1) = \gamma_{\uI( a_{i-1} )} (1) 
= \gamma_{i-1} (1) = i$, and also that $q_{\pi} (j) = i$, since 
none of the factors to the left of $\gamma_{\uI ( a_{i-1} )}$ in 
the product (\ref{eqn:58b}) can move $i$.

\vspace{6pt}

{\em Case 3.} $b_{j-1} - 1$ belongs to $V_{i-1}$ for an $i \neq j$
in $\{ 2, \ldots , h+1 \}$, and $b_{j-1} - 1 = b_{i-1}$.

In this case we still get (same as in Case 2) 
$\gamma_{\uI( b_{j-1} - 1)} (1) = i$, but we cannot yet decide what 
is $q_{\pi} (j)$, since the number $i$ will be moved by the later 
factor $\gamma_{\uI( a_{i-1} )}$ of the product (\ref{eqn:58b}).
More precisely, Case 3 can be divided into subcases according to the 
status of $a_{i-1}$, as follows.

\vspace{6pt}

{\em Case 3-1.} $a_{i-1} = 1$ (which, according to the convention 
from (\ref{eqn:58a}), means that $i=2$).  

Then $\gamma_{\uI( a_{i-1} )} (i) = \gamma_{i-1} (i) = 1$, and we 
conclude that $q_{\pi} (j) = 1$.

\vspace{6pt}

{\em Case 3-2.} $a_{i-1} \neq 1$, and $a_{i-1} -1 = a_{\ell -1}$ for
\footnote{The value of $\ell$ appearing here is sure to be such that 
$\ell \neq i$, but it is not ruled out that we have $\ell = j$.  In 
the latter case, the outcome of Case 3-2 is that $j$ is a fixed point 
for $q_{\pi}$. }
an $\ell \in \{ 2, \ldots , h+1 \}$.

In this subcase we find that
$\gamma_{\uI( a_{i-1})} (i) = \gamma_{i-1} (i) = 1$, followed by 
$\gamma_{\uI( a_{\ell -1})} (1) = \gamma_{\ell -1} (1) = \ell$.  At 
this point we can conclude that $q_{\pi} (j) = \ell$, because $\ell$ 
is no longer moved by the remaining factors (to the left of 
$\gamma_{\uI( a_{\ell -1})}$) to be considered in the product 
(\ref{eqn:58c}).

\vspace{6pt}

{\em Case 3-3.} $a_{i-1} \neq 1$, and $a_{i-1} -1 = b_{\ell -1}$ for
an $\ell \in \{ 2, \ldots , h+1 \}$.

In this subcase we find, same as in Case 3-2, that
$\gamma_{\uI( a_{i-1} )} (i) = 1$, which is now followed by 
$\gamma_{\uI( b_{ \ell -1} )} (1) = \ell$.  But unlike in Case 3-2, 
in order to continue the discussion towards the determination of 
$q_{\pi} (j)$, we need to make a further subdivision into 
subcases.  Indeed, what we must do is look at the number 
$a_{\ell -1} < b_{\ell -1}$ and break again into three subcases (which 
could be numbered as Cases 3-3-1, 3-3-2 and 3-3-3) according to whether 
$a_{\ell - 1} = 1$, or $a_{\ell - 1} -1 = a_{m-1}$ for some 
$m \in \{ 2, \ldots , h+1 \}$, or $a_{\ell - 1} -1 = b_{m-1}$ for some 
$m \in \{ 2, \ldots , h+1 \}$.

The conclusion of all this discussion will be recorded in 
Lemma \ref{lemma:512} below.  In order to state the lemma, we first
clarify the convention for how to restrict permutations to subsets 
that aren't necessarily invariant.
\end{remark}

\begin{notation}    \label{def:511}
Let $\tau$ be a permutation in $S_{\infty}$, and let $A$ be a finite 
non-empty subset of $\bN$.  We do not assume that $A$ is invariant for 
$\tau$, but let us note that there still exists a natural bijection 
$\theta : A \to A$ which one could call ``permutation of $A$ induced by   
$\tau$'', and is described as follows. 

Let $a$ be a number in $A$.  We look at the sequence of values 
$v_1, v_2, \ldots , v_k , \ldots$ in $\bN$ obtained by putting
\begin{equation}   \label{eqn:511a}
v_1 = \tau (a), v_2 = \tau (v_1), \ldots , v_k = \tau (v_{k-1}), \ldots
\end{equation}
and we define $\theta (a) := v_{k_o} \in A$ where
$k_o := \min \{ k \in \bN \mid v_k \in A \}$.  In words: the $v_k$'s
in (\ref{eqn:511a}) follow the orbit of $\tau$ which contains $a$, 
and $\theta (a)$ is the first re-entry in $A$ which is encountered 
along that orbit.
\end{notation}

We leave to the reader the (fairly straightforward but nevertheless 
tedious) job to verify that, upon re-reading and suitably expanding the 
multi-case discussion about ``how to find out what is $q_{\pi} (j)$'' 
from Remark \ref{rem:510}, one arrives to the following formal statement.

\begin{lemma}   \label{lemma:512}
Let $k = 2h$ be an even positive integer and let 
$\pi = \{ V_1, \ldots , V_h \}$ be a pair-partition in $\cP_2 (2h)$,
where we write explicitly
$V_1 = \{ a_1, b_1 \}, \ldots , V_h = \{ a_h, b_h \}$,
with $a_1 < b_1, \ldots , a_h < b_h$ and with
$1 = a_1 < \cdots < a_h$. We also put $b_0 := 2h+1$.  We consider two 
permutations $\theta_1, \theta_2$ of sets of cardinality $h+1$, 
as follows.

\vspace{6pt}

$\bullet$  On the one hand, let $q_{\pi} \in S_{\infty}$ be 
defined as in Equation (\ref{eqn:58b}) of Notation \ref{def:58}, and 
let $\theta_1$ be the restriction of $q_{\pi}$ to its invariant 
set $\{ 1, \ldots , h+1 \}$.

\vspace{6pt}

$\bullet$
On the other hand let us consider the permutations
$\perm_{\pi}$ and $\mbox{c}_{2h+1 \to 1}$ defined 
in Notation \ref{def:53}, and let $\theta_2$ be the permutation 
induced (in the sense of Notation \ref{def:511}) by the product 
$\perm_{\pi} \cdot \mbox{c}_{2h+1 \to 1} \in S_{\infty}$
on the finite set $\{ b_0, b_1, \ldots , b_h \} \subseteq \bN$.  

\vspace{6pt}

Then $\theta_1$ and $\theta_2$ are conjugated by the bijection 
$\{ 1,2, \ldots , h+1 \} \to \{ b_0, b_1, \ldots , b_h \}$
which maps $i \mapsto b_{i-1}$ for every $1 \leq i \leq h+1$.
\hfill  $\square$
\end{lemma}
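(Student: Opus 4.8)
The plan is to prove the equivalent assertion that the bijection $\phi : \{1,\ldots,h+1\} \to \{b_0,\ldots,b_h\}$, $\phi(i) := b_{i-1}$, intertwines $\theta_1$ and $\theta_2$, i.e. that $\theta_2(b_{i-1}) = b_{\theta_1(i)-1}$ for every $i \in \{1,\ldots,h+1\}$. First I would record an explicit description of $\tau := \perm_{\pi} \cdot \mbox{c}_{2h+1 \to 1}$: one has $\tau(1) = 2h+1 = b_0$ and, for $2 \le m \le 2h+1$, $\tau(m) = \perm_{\pi}(m-1)$, which equals $b_s$ when $m-1 = a_s$ and equals $a_s$ when $m-1 = b_s$. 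Since $\{1,\ldots,2h+1\}$ is the disjoint union of $A := \{b_0,\ldots,b_h\}$ with $\{a_1,\ldots,a_h\}$, the applications of $\tau$ that leave $A$ are exactly those landing on some $a_s$, and $\theta_2(b_{i-1})$ is by definition the first point of the orbit of $b_{i-1}$ that falls back into $A$, i.e. the first $b$ encountered.

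The heart of the argument is a \emph{simulation lemma} matching, step for step, the right-to-left reading of the word $q_{\pi} = \gamma_{\uI(1)}\cdots\gamma_{\uI(2h)}$ that evaluates $q_{\pi}(i) = \theta_1(i)$ (as in Remark \ref{rem:510}) against the $\tau$-orbit excursion that computes $\theta_2(b_{i-1})$. The \emph{loop invariant} I would carry is the dictionary
\[
\big(\text{the reading carries the value } 1 \text{ and is about to read position } p\big) \ \longleftrightarrow \ \big(\text{the orbit sits at } p+1\big).
\]
For $i \ge 2$ this state is first reached after the preliminary segment in which the input $i$ is untouched until the right occurrence $b_{i-1}$ of $\gamma_{i-1}$ turns it into $1$, leaving the reader about to read position $b_{i-1}-1$; this matches starting the orbit at $b_{i-1} = (b_{i-1}-1)+1$. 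For $i=1$ the value is already $1$ and one is about to read position $2h = b_0 - 1$, matching the start at $b_0 = 2h+1$. A single $\tau$-step then preserves the invariant: if the position $p$ about to be read is a left endpoint $a_\ell$, then $\gamma_\ell$ sends $1 \mapsto \ell+1$ and the reading halts with output $\ell+1$, while $\tau$ sends $p+1$ to $\perm_{\pi}(a_\ell) = b_\ell = \phi(\ell+1)$, returning to $A$; so the two computations terminate together in agreement. If instead $p = b_\ell$ is a right endpoint, the reader passes through the partner occurrence of $\gamma_\ell$ at $a_\ell$ (so $1 \mapsto \ell+1 \mapsto 1$) and is next about to read $a_\ell - 1$, while $\tau$ sends $p+1$ to $\perm_{\pi}(b_\ell) = a_\ell \notin A$, reinstating the invariant with $p$ replaced by $a_\ell - 1$.

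I would package these observations as an induction on the number of \emph{continue} steps, equivalently on the length of the excursion of the orbit outside $A$; the termination clause yields $\theta_2(b_{i-1}) = b_{q_{\pi}(i)-1} = b_{\theta_1(i)-1}$, which is the claimed conjugacy. The main obstacle is exactly the feature that makes the case analysis of Remark \ref{rem:510} nest without an a priori bound (the subdivisions $3$-$3$-$1$, $3$-$3$-$2$, $3$-$3$-$3$, \ldots): rather than enumerate these, the real work is to isolate the single repeating transition above and to check that the index bookkeeping — the shift by $1$ coming from $\mbox{c}_{2h+1 \to 1}$, the passage to the $\pi$-partner coming from $\perm_{\pi}$, and the special boundary r\^ole of $1 \leftrightarrow 2h+1 = b_0$ — closes the loop invariant at every step.
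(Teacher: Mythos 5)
Your proposal is correct and is essentially the proof the paper intends: Lemma \ref{lemma:512} is presented as the outcome of ``suitably expanding'' the right-to-left reading of the word $q_{\pi}$ done in Remark \ref{rem:510}, and your loop invariant (``value $1$, about to read position $p$'' $\leftrightarrow$ ``orbit of $\perm_{\pi} \cdot \mbox{c}_{2h+1 \to 1}$ at $p+1$''), together with induction on the length of the excursion outside $\{ b_0, b_1, \ldots , b_h \}$, is exactly the clean way to organize that remark's unboundedly nested Cases 3, 3-3, 3-3-3, \dots. The only clause to add in a write-up is the terminal one that your two-case analysis omits: the position about to be read can reach $p = 0$ (after a right-endpoint step at $p = b_1$, whose partner is $a_1 = 1$), where the reading halts with output $1$ while $\tau$ sends the orbit point $1$ to $b_0 = \phi (1)$; this is the paper's Case 3-1, i.e.\ precisely the boundary identification $1 \leftrightarrow b_0 = 2h+1$ that you flag at the end, and with it the induction also closes for the unique $i$ having $\theta_1 (i) = 1$.
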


In the framework of the preceding lemma, it will be useful to also 
have on record the following observation.

\begin{lemma}   \label{lemma:513}
Consider the same notations as in Lemma \ref{lemma:512}. 
Suppose $R$ is an orbit of the permutation 
$\perm_{\pi} \cdot \mbox{c}_{2h+1 \to 1}$,
such that $R \subseteq \{ 1,2, \ldots , 2h + 1 \}$.  Then 
$R \cap \{ b_0, b_1, \ldots , b_h \} \neq \emptyset$.
\end{lemma}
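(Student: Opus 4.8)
The plan is to recast the assertion in contrapositive form: writing $B := \{ b_0, b_1, \ldots , b_h \}$ and $\sigma := \perm_{\pi} \cdot \mbox{c}_{2h+1 \to 1}$, it suffices to prove that no $\sigma$-orbit is entirely contained in $\{ 1, \ldots , 2h+1 \} \setminus B$. The first step is to identify this complement explicitly. Because the pairs $\{ a_1, b_1 \}, \ldots , \{ a_h, b_h \}$ partition $\{ 1, \ldots , 2h \}$, the sets $\{ a_1, \ldots , a_h \}$ and $\{ b_1, \ldots , b_h \}$ are disjoint with union $\{ 1, \ldots , 2h \}$; adjoining $b_0 = 2h+1$ gives $B = \{ 1, \ldots , 2h+1 \} \setminus \{ a_1, \ldots , a_h \}$. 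Hence the complement of $B$ inside $\{ 1, \ldots , 2h+1 \}$ is exactly $A := \{ a_1, \ldots , a_h \}$, and the goal becomes: no $\sigma$-orbit lies inside $A$.

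The second step is to compute $\sigma$ on a point $a_i \in A$, using the convention $\sigma (x) = \perm_{\pi} \bigl( \mbox{c}_{2h+1 \to 1} (x) \bigr)$ (the right-hand factor acts first, as in the example worked out in Remark \ref{rem:54}) together with the fact that $\mbox{c}_{2h+1 \to 1}$ sends $x \mapsto x-1$ for $x \geq 2$ and $1 \mapsto 2h+1$. A short case analysis then handles the three possibilities for $a_i$. If $a_i = a_1 = 1$, then $\mbox{c}_{2h+1 \to 1} (1) = 2h+1 = b_0$, which $\perm_{\pi}$ fixes, so $\sigma (a_i) = b_0 \in B$. If $a_i \geq 2$ and $a_i - 1 \in A$, then $\sigma (a_i) = \perm_{\pi} (a_i - 1)$ is one of the $b$'s, hence lies in $B$. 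The only remaining possibility is $a_i \geq 2$ with $a_i - 1 = b_j$ for some $j$, and then $\sigma (a_i) = \perm_{\pi} (b_j) = a_j$, where $a_j < b_j = a_i - 1 < a_i$.

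The third step is a descent argument. Suppose, for contradiction, that some $\sigma$-orbit $R$ is contained in $A$. Then $\sigma$ restricts to a bijection of the finite set $R$, and by the case analysis every $a_i \in R$ must fall in the last case, so $\sigma (a_i) = a_j$ with $a_j < a_i$; that is, $\sigma$ strictly decreases values on $R$. Applying this to $m := \min R$ yields $\sigma (m) < m$, while $\sigma (m) \in R$ forces $\sigma (m) \geq m$, a contradiction. Therefore no orbit sits inside $A$, so every orbit of $\sigma$ contained in $\{ 1, \ldots , 2h+1 \}$ meets $B$, which is the claim.

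The genuinely substantive point is the strict inequality $a_j < a_i$ in the surviving case, which converts the statement into the elementary impossibility of a strictly decreasing bijection of a finite totally ordered set; everything else is bookkeeping. Accordingly, I expect the care to be needed not in any hard estimate but in the clean execution of the case split — in particular in fixing the composition order correctly and in treating the boundary value $a_i = 1$ (where $\mbox{c}_{2h+1 \to 1}$ wraps around to $b_0 = 2h+1$) as its own case.
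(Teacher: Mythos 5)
Your proof is correct and follows essentially the same route as the paper's: reduce to the claim that no orbit can sit inside $\{a_1,\ldots,a_h\}$, observe via the ordering convention $a_1 < \cdots < a_h$ that staying in that set forces $\sigma$ to strictly decrease (the paper states exactly your Case 3 implication), and derive a contradiction from finiteness, with the wrap-around $1 \mapsto 2h+1 = b_0$ handling the boundary. The only difference is organizational — the paper iterates the descent until it reaches $a_1$ and gets the contradiction there, while you rule out the non-decreasing cases first and contradict at $\min R$ — which is an equivalent rearrangement of the same idea.
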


\begin{proof}
Assume for contradiction that 
$R \cap \{ b_0, b_1, \ldots , b_h \} = \emptyset$.
This implies that 
\[
R \subseteq \{ 1,2, \ldots , 2h+1 \} \setminus 
\{ b_0, b_1, \ldots , b_h \} = \{ a_1, \ldots , a_h \}.
\]

Let us observe that, due to the convention of how 
$a_1, \ldots , a_h$ are chosen, we have the implication
\[
\left(  \begin{array}{c}
2 \leq i \leq h  \\
( \perm_{\pi} \cdot \mbox{c}_{2h+1 \to 1} ) (a_i) = a_j
\end{array}  \right)  \ \Rightarrow \ a_j < a_i \ \Rightarrow \ j < i.
\]
By starting with an $a_i \in R$ and by iterating the above 
observation, we must eventually get that $a_1 \in R$.  
But $a_1 = 1$, and
$\bigl( \, \perm_{\pi} \cdot \mbox{c}_{2h+1 \to 1} \, \bigr) (1)
= \perm_{\pi} (2h+1) = 2h+1 = b_0$.  It follows that $b_0 \in R$,
contradiction.
\end{proof}

$\ $

\begin{ad-hoc-item}
{\bf Proof of Proposition \ref{prop:56}.}
We fix an even positive integer
$k = 2h$ and a pair-partition $\pi \in \cP_2 (2h)$.
We consider the canonical writing
$\pi = \{ \, \{ a_1, b_1 \}, \ldots , \{ a_h, b_h \} \, \}$
described in Notation \ref{def:58}.  As observed in 
Remark \ref{rem:515}(1), we are left 
to verify a combinatorial identity, coming to:
\begin{equation}   \label{eqn:514a}
\# \Bigl( q_{\pi} \mid \{ 1, \ldots , h+1 \} \Bigr)
= \# \Bigl( \perm_{\pi} \cdot \mbox{c}_{1 \to 2h} \mid 
\{ 1, \ldots , 2h \} \Bigr).
\end{equation}
We will do this verification by checking that both sides of 
Equation (\ref{eqn:514a}) are equal to
\begin{equation}   \label{eqn:514b}
\# \bigl( \perm_{\pi} \cdot \mbox{c}_{2h+1 \to 1} \mid
\{ 1, \ldots , 2h+1 \} \bigr).
\end{equation}

\vspace{6pt}

{\em Verification that the left-hand side of (\ref{eqn:514a}) 
is equal to (\ref{eqn:514b}). }
Same as in Lemma \ref{lemma:512}, besides the numbers 
$b_1, \ldots , b_h$ considered in the canonical writing of $\pi$ 
we also put $b_0 := 2h+1$.  Lemma \ref{lemma:512} says that the 
permutation induced by $\perm_{\pi} \cdot \mbox{c}_{2h+1 \to 1}$ 
on $\{ b_0, b_1, \ldots , b_h \}$ is precisely what one obtains 
by starting from $q_{\pi} \mid \{ 1, \ldots , h+1 \}$
and by doing the identification $i \leftrightarrow b_{i-1}$, 
$1 \leq i \leq h+1$.  This implies that 
$\# \bigl( q_{\pi} \mid \{ 1, \ldots ,h+1 \} \bigr)$ 
is equal to the number of orbits of 
$\perm_{\pi} \cdot \mbox{c}_{2h+1 \to 1}$ which intersect 
$\{ b_0, b_1, \ldots , b_h \}$.  However, Lemma \ref{lemma:513} 
assures us that these are all the orbits of 
$\perm_{\pi} \cdot \mbox{c}_{2h+1 \to 1} \mid \{ 1, \ldots , 2h+1 \}$
(and this concludes the required verification).

\vspace{6pt}

{\em Verification that the right-hand side of (\ref{eqn:514a}) 
is equal to (\ref{eqn:514b}). }
Here we first observe the equality
\begin{equation}   \label{eqn:514c}
\# \bigl( \perm_{\pi} \cdot \mbox{c}_{2h+1 \to 1} \mid 
\{ 1, \ldots , 2h+1 \} \bigr) = 
\# \bigl( \perm_{\pi} \mbox{c}_{1 \to 2h+1} \mid 
\{ 1, \ldots , 2h+1 \} \bigr),
\end{equation}
which is an immediate consequence of the fact that $\perm_{\pi}$ 
is its own inverse.  So it suffices to verify the equality between 
the right-hand side of (\ref{eqn:514a}) and the right-hand side of 
(\ref{eqn:514c}).  The latter equality comes out of the following 
observation: the action of the permutation 
$\perm_{\pi} \cdot \mbox{c}_{1 \to 2h+1}$ is such that
\[
2h \mapsto 2h+1 \mapsto b_1;
\]
this shows that 
$\perm_{\pi} \cdot \mbox{c}_{1 \to 2h+1} \mid \{ 1, \ldots , 2h+1 \}$ 
can be obtained by starting from 

\noindent
$\perm_{\pi} \cdot \mbox{c}_{1 \to 2h} \mid \{ 1, \ldots , 2h \}$ 
and by inserting the number $2h+1$ in the cycle which contains $2h$ 
(from ``$( \ldots , 2h, b_1, \ldots )$'', that cycle becomes 
``$( \ldots , 2h, 2h+1, b_1, \ldots )$'').  It follows that 
$\perm_{\pi} \cdot \mbox{c}_{1 \to 2h+1} \mid \{ 1, \ldots , 2h+1 \}$ 
and $\perm_{\pi} \cdot \mbox{c}_{1 \to 2h} \mid \{ 1, \ldots , 2h \}$ 
have the same number of orbits, as required.
\hfill $\square$
\end{ad-hoc-item}

$\ $

$\ $

{\bf Acknowledgements.}  
We are grateful to the anonymous referees for the 
careful reading and for thought-provoking comments 
which lead to significant improvements in the 
first version of the paper.
A.N. would also like to thank Alexandru Gatea for 
useful discussions at an early stage of this project.

$\ $

\end{document}